\newlength\Colsep
\numberwithin{equation}{section}
\theoremstyle{plain}
\DeclareSymbolFont{fouriersymbols}{FMS}{futm}{m}{n}
\DeclareSymbolFont{fourierlargesymbols}{FMX}{futm}{m}{n}
\DeclareMathDelimiter{\hsnorm}{\mathord}{fouriersymbols}{152}{fourierlargesymbols}{147}
\newcommand{\hs}{\big\hsnorm}
\newcommand {\R}{\mathbb{R}}
\newcommand {\F}{\mathscr{F}}
\newcommand {\B}{\mathscr{B}}
\newcommand{\Q}{\mathscr{Q}}
\newcommand{\scrS}{\mathscr{S}}
\newcommand{\G}{\mathscr{G}}
\newcommand{\V}{\mathscr{V}}
\newcommand{\U}{\mathscr{U}}
\newcommand{\cum}{\rm cum}
\newtheorem{theorem}{Theorem}
\newtheorem{proposition}{Proposition}
\newtheorem{lemma}{Lemma}
\newtheorem{remark}{Remark}
\newtheorem{assumptions}{Assumptions}
\numberwithin{equation}{section}
\theoremstyle{plain}
\begin{document}

\begin{frontmatter}
\title{\large{ Methodology and Convergence Rates for Functional Time Series Regression\thanksref{T1}}}

\runtitle{Functional Time Series Regression}
\thankstext{T1}{Research supported by an ERC Starting Grant Award to Victor M. Panaretos.}

\begin{aug}

\noindent\begin{minipage}{0.7\textwidth}
\begin{minipage}[c][2.5cm][c]{\dimexpr0.5\textwidth-0.5\Colsep\relax}

\author{\fnms{Tung} \snm{Pham}\thanksref{t1}\ead[label=e1]{pham.t@unimelb.edu.au}}

\address{School of Mathematics and Statistics\\
Universiry of Melbourne\\
\printead{e1}}

\end{minipage}\hfill
\begin{minipage}[c][2.5cm][c]{\dimexpr0.5\textwidth-0.5\Colsep\relax}
\author{\fnms{Victor M.} \snm{Panaretos}
\ead[label=e2]{victor.panaretos@epfl.ch}
}

\address{Institut de Math\'ematiques\\
Ecole Polytechnique F\'ed\'erale de Lausanne\\
\printead{e2}\\
}

\end{minipage}%
\end{minipage}

\thankstext{t1}{Part of this work was carried out while the first author was with the Institute of Mathematics, Ecole Polytechnique F\'ed\'erale de Lausanne.}

\runauthor{T. Pham \& V.~M. Panaretos}

\end{aug}

\begin{abstract}
The functional linear model extends the notion of linear regression to the case where the response and covariates are iid elements of an infinite dimensional Hilbert space. The unknown to be estimated is a Hilbert-Schmidt operator, whose inverse is by definition unbounded, rendering the problem of inference ill-posed. In this paper, we consider the more general context where the sample of response/covariate pairs forms a weakly dependent stationary process in the respective product Hilbert space: simply stated, the case where we have a regression between functional time series. We consider a general framework of potentially nonlinear processes, expoiting recent advances in the spectral analysis of time series. This allows us to quantify the inherent ill-posedness, and motivate a Tikhonov regularisation technique in the frequency domain. Our main result is the establishment of the rate of convergence for the corresponding estimators of the regression coefficients, the latter forming a summable sequence in the space of Hilbert-Schmidt operators. In a sense, our main result can be seen as a generalisation of the classical functional linear model rates, to the case of time series, and rests only upon Brillinger-type mixing conditions. It is seen that, just as the covariance operator eigenstructure plays a central role in the independent case, so does the spectral density operator's eigenstructure in the dependent case. While the analysis becomes considerably more involved in the dependent case, the rates are strikingly comparable to those of the i.i.d. case, but at the expense of an additional factor caused by the necessity to estimate the spectral density operator at a nonparametric rate, as opposed to the parametric rate for covariance operator estimation. 
\end{abstract}

\begin{keyword}[class=MSC]
\kwd[Primary ]{62M15}
\kwd{62J07}
\kwd[; secondary ]{45B05}
\end{keyword}

\begin{keyword}
\kwd{frequency analysis}
\kwd{functional linear model}
\kwd{spectral density operator}
\kwd{system identification}
\kwd{Tikhonov regularisation}
\end{keyword}

\end{frontmatter}

\tableofcontents

\newpage

\section{Introduction}

Functional regression generalises the classical linear model of multivariate statistics to the case where the parameter, the response, and the error components reside in general separable Hilbert spaces, while the design matrix is replaced by a linear operator between these spaces (\citet{grenander1981}). The most studied case is that where the covariate lies in space $(L^2[0,1],\langle\cdot,\cdot\rangle,\|\cdot\|)$ of square integrable real functions on the unit interval (\citet{HK12}, \cite{HE15}, \citet{RS05}). Here one has independent random elements $X,\epsilon\in L^2[0,1]$, and a bounded linear operator $\mathscr{B}:L^2[0,1]\rightarrow \mathscr{H}$ mapping into a separable Hilbert space $\mathscr{H}$, yielding the regression model
$$Y=\mathscr{B}X+\varepsilon.$$
The random elements $X$ and $Y$ are assumed observable, but $\varepsilon$ is unobservable and $\mathscr{B}$ is unknown and is to be estimated from i.i.d. replicates $\{(X_n,Y_n)\}_{n\in\mathbb{N}}$ of $(X,Y)$. The most studied case is the so called scalar-on-function regression, where $\mathscr{H}=\mathbb{R}$ and so $\mathscr{B}$ reduces to a bounded linear functional $\mathscr{B}f=\langle f,\beta\rangle$, and the function $\beta$ is the parameter of interest. More general is the case where $\mathscr{H}=L^2[0,1]$, and the operator $\mathscr{B}$ is an integral operator with kernel $\beta\in L^2([0,1]^2)$,
$$\mathscr{B}f=\int_0^1 \beta(\sigma,\tau) f(\tau)d\tau,\qquad\forall\,f\in L^2[0,1].$$  
In either of these cases, writing down the normal equations reveals that one is confronted with an ill-posed inverse problem: the equations involve the application of the inverse of the trace-class covariance operator $\mathscr{R}$ of the random element $X$. Worse still, the operator $\mathscr{R}$ is unknown, and needs to be replaced by its empirical version. Consequently, the statistical methodology for functional regression must involve some means of regularisation, the most popular being PCA regression (or spectral truncation), where one replaces the empirical covariance $\hat{\mathscr{R}}$ by its best rank $K$ approximation in nuclear norm, for some regularisation parameter $K$ (that is of course allowed to grow with $n$; see, e.g.  \citet[Chapter 10]{RS05}; \citet{FerratyReg}; \citet{CFF02}; \citet{HP06}).

In a landmark contribution on the functional linear model, \citet{HH07}  demonstrated that while the PCA estimator can achieve minimax rates (in probability) in some cases, the ridge estimator (corresponding to Tikhonov regularisation, and adding a multiple of the identity to the empirical covariance) can have important advantages. Theoretically, the Tikhonov estimator can achieve the minimax mean square error (MSE) rate, whereas the truncated PCA estimators would need to undergo a nonlinear modification to achieve similar MSE rates (see, e.g. \citet[Theorem 5, Appendix A.2]{HHN06}, and the remarks following \citet[Theorem 1]{HH07})). Practically, \citet{HH07} showed that the Tikhonov estimator enjoyed better stability properties and was robust to eigenvalue ties. The results of \citet{HH07} apply to the scalar-on function case, and extensions thereof have recently been considered in the function-on-function case (\citet{imaizumi2016pca}).   

In this paper, we attack the problem of extending the Tikhonov-based methodology and rates of convergence of \citet{HH07} to the case of the function-on-function regression of time series (which can also be seen as a functional linear system identification problem). Here, the observed covariates $\{X_t\}_{t\in Z}$ and unobservable errors $\{\epsilon_t\}_{t=1}^{T}$ are no longer i.i.d., but constitute stationary processes in $L^2[0,1]$. The resulting response process $\{Y_t\}_{t\in \mathbb{Z}}$ is then also a stationary process, linearly coupled to the $X_t$ and $\epsilon_t$ via a sequence of operators $\{\mathscr{B}_t\}_{t\in\mathbb{Z}}$,
$$Y_t=\sum_s\mathscr{B}_{t-s}X_s+\epsilon_t,\qquad t\in\mathbb{Z}.$$
Of interest is the estimation of the operators (or \emph{filter}) $\{\mathscr{B}_t\}$, on the basis of the observation of a finite stretch of pairs $\{(X_t,Y_t)\}_{t=0}^{T-1}$. This case is considerably more challenging than the i.i.d. function-on-function case. The reason is that further to the intrinsic covariation of each regressor function $X_t$, encapsulated in the covariance $\mathscr{R}$, one needs to account for the temporal covariation between lagged regressor functions $X_t$ and $X_{t+s}$. These too contribute to the ill-posedness of the problem, which is now doubly ill-posed: one needs to solve an operator deconvolution problem, where the ``Fourier division" step is replaced with the solution of an integral equation. To account for these two layers of ill-posedness, one needs to consider the frequency domain framework (\citet{panaretos2013fourier}, \citet{panaretos2013cramer}), and it turns out that the operator that needs to be inverted as part of the normal equations is now the spectral density operator of the process $\{X_t\}$, 
 $$\mathscr{F}_{\omega}^{XX}=\frac{1}{2\pi}\sum_t e^{-\mathbf{i}t\omega} \mathscr{R}^{X}_t,$$
 the Fourier transform of the lag $t$ autocovariance operators $\mathscr{R}_t$ of $\{X_t\}$.

Just as estimation in the i.i.d case is based on the spectral truncation or the ridge regularisation of the covariance operator, estimation in the time series case can be based on the spectral truncation or ridge regularisation of the spectral density operator (achieved by \emph{harmonic} or \emph{dynamic} PCA, see \citet{panaretos2013cramer} and \citet{hormann2015dynamic}). The spectral truncation approach was recently considered and studied by \citet{hormann2015estimation}, and indeed this appears to be the first contribution to the theory of time series regression without any structural assumptions further to weak dependence (to be contrasted to the functional regression of \emph{linear processes}, which are much better understood, see \citet{bosq2012linear}). \citet{hormann2015estimation} show that by truncating the spectral density operator at a certain rate, one can obtain consistent estimators of the operators $\{\mathscr{B}_t\}$, under weak dependence conditions. An elegant aspect of their approach is that the ``correct" truncation rate can in principle be deduced from the data. Still, convergence rates have to date not been established.

Inspired by the work of \citet{HH07}, we set forth to establish such convergence rates. In view of the technical difficulties of PCA regression in the i.i.d. case, it seems unlikely that MSE error rates would be attainable for the truncated harmonic PCA estimator without some nonlinear modification -- after all, the i.i.d. setup is a special case of the time series setup, and so any difficulties encountered in the former will apply to the latter, too. This motivates us to introduce a different regularisation method than that of \citet{hormann2015estimation}, adopting the Tikhonov perspective. In this framework, we establish the rate of convergence under Brillinger type weak dependence conditions \citep{brillinger2001time}, and mild ill-posedness assumptions formulated in direct analogy to the assumptions of \citet{HH07} (and of \citet{imaizumi2016pca}). The convergence rate turns out to be the same as in the i.i.d. case, except for the presence of a bandwidth factor that results from the fact that one needs to estimate the spectral density operator by smoothing the periodogram operator: unless one \emph{knows} the processes to actually be uncorrelated in $t\in\mathbb{Z}$, this is a term that cannot be escaped. 

The paper is organised as follows. Section \ref{notation} establishes notational conventions and analytic notions employed throughout the paper. Section \ref{time_series_background} then briefly reviews the framework of functional time series, including the key objects of frequency domain functional time series used in the sequel. Functional time series regression and its diagonalisation are considered in Sections \ref{model_definition} and \ref{diagonalisation}. This motivates the methodological contribution of the paper, which is the Fourier-Tikhonov estimator, presented in Section \ref{estimation} and discussed in detail in comparison to PCA-based methodology. Our central theoretical result is given in the form of Theorem \ref{main_theorem} in Section \ref{rate}, and is the MSE rate of convergence of the Fourier-Tikhonov estimator. The proof of the main result is quite involved, and is thus developed via a sequence of intermediate results in the separate Section \ref{proof_section}, with accessory steps proven separately in the Appendix \ref{appendix}.

\section{Basic Definitions and Notation}\label{notation}

We will be working in the usual context of functional data analysis, which assumes that each datum arises as the realisation of a random elemmaent of the separable Hilbert space $L^2([0,1])$ of square integrable real functions on $[0,1]$. The latter is equipped with the standard inner product and norm
$$\langle f,g\rangle=\int_0^1 f(\tau)g(\tau)d\tau,\qquad \|f\|^2=\int_0^1 f^2(\tau)d\tau=\langle f,f\rangle.$$
Given a linear operator $\mathscr{B}:L^2([0,1])\rightarrow L^2([0,1])$, we will denote its adjoint by $\mathscr{B}^*$, its generalised inverse by $\mathscr{B}^{\dagger}$, and its inverse by $\mathscr{B}^{-1}$, if well defined. The Schatten-$\infty$ norm (operator norm), Schatten-2 norm (Hilbert-Schmidt norm), and Schatten-1 norm (nuclear norm) will be respectively denoted by
$$\hs \mathscr{B}\hs_{\infty}=\sup_{\|h\|=1}\|\mathscr{B}h\|,\quad \hs \mathscr{B} \hs_2 =\sqrt{\mbox{trace}\left(\mathscr{B}^*\mathscr{B}\right)},\quad  \hs \mathscr{B} \hs_1=\mbox{trace}\left(\sqrt{\mathscr{B}^*\mathscr{B}}\right).$$ 
Occasionally, we will abuse notation, and apply a Schatten norm to the kernel of the corresponding integral operator, in which case it should be understood that the norm applies to the induced operator. For example, if $f\in L^2([0,1]^2)$, we may write $\hs f\hs_1$ to denote the Schatten-1 norm of the operator $g\mapsto \int_0^1 f(s,t)g(t)dt$.

The identity operator will be denoted by $\mathscr{I}$. For a pair of elements $f,g\in L^2[0,1]$, we define the tensor product (operator) as $f\otimes g:L^2[0,1]\to L^2[0,1]$ 
$$(f\otimes g)u=\langle g,u\rangle f,\qquad u\in \mathcal{H}.$$
We will make use of the same notation for tensor products of operators, i.e. if $\mathscr{A}$, $\mathscr{B}$, and $\mathscr{G}$ are operators $L^2[0,1]\rightarrow L^2[0,1]$, we write
$$(\mathscr{A}\otimes \mathscr{B})\,\mathscr{G}=\mbox{trace}\left(\mathscr{B}^*\mathscr{G}\right)\mathscr{A}.$$

Finally, we will use $Conv_C\big(L^2([0,1]^2,\mathbb{C}) \times  L^2([0,1]^2,\mathbb{C})\big)$ to denote the set of finite convex combinations of elements of the form $f \times g$ with $f,g \in L^2([0,1]^2,\mathbb{C})$ whose induced operators have Schattern 1-norm uniformly bounded by a  constant $C$. A generic element of $Conv_C\big(L^2([0,1]^2,\mathbb{C}) \times  L^2([0,1]^2,\mathbb{C})\big)$ will be denoted by $\vartheta_1\odot\vartheta_2$, which is understood as implying that this element can be written in the form
$$\vartheta_1(a_1,a_2)\odot \vartheta_2(a_3,a_4)=\sum_{j=1}^J \pi_j f_j(a_1,a_2)h_j(a_3,a_4),$$
for a probability measure $\{\pi_j\}_{j=1}^{J}$ and functions $f_j,h_j\in L^2([0,1]^2,\mathbb{C})$ such that the Schatten-1 norms $\{\hs f_j\hs_1,\hs h_j\hs _1\}_{j=1}^{J}$ of the operators $L^2([0,1],\mathbb{C})\rightarrow L^2([0,1],\mathbb{C})$ with kernels $f_j$ and $h_j$ are all bounded by $C$. This notation will be used frequently to abbreviate cumbersome terms in Taylor expansions involving linear combinations of products of kernels.

\section{Functional Time Series Background}\label{time_series_background}

A \emph{functional time series} is a sequence of random elements $\{X_t\}$ of $L^2[0,1]$, indexed indexed by $t\in \mathbb{Z}$ (interpreted as time). The argument of each function $X_t$ is denoted by $\tau\in[0,1]$,
$$
X_t(\tau):[0,1] \rightarrow  \mathbb{R},\qquad \mbox{ for }t\,\in \mathbb{Z}.\\
$$
We will consider only \emph{strictly stationary} time series: given any finite index set $I\subset \mathbb{Z}$, and any $s\in \mathbb{Z}$, it holds that
$$\{X_t\}_{t\in I}\stackrel{d}{=}\{X_{t+s}\}_{t\in I}.$$
The mean function and lag $t$ covariance kernel of $\{X_t\}$ are given by,
$$\mu^X(\tau) = \mathbb{E}[X_{t}(\tau)], \quad r_{t}^{X}(\tau, \sigma) = \mathbb{E}\left\{ \left( X_{t+s}(\tau) - \mu^X(\tau) \right) \left( X_{s}(\sigma) - \mu^X(\sigma) \right) \right\},\, t,s \in \mathbb{Z},$$ 
and are well-defined for almost all $\tau\in [0,1]$ and $(\tau,\sigma)\in[0,1]^2$, respectively, when $\mathbb{E}\| X_0\|^2<\infty$. The lag $t$ covariance operator  $\mathscr{R}^{X}_t:L^2[0,1]\rightarrow L^2[0,1]$ is then defined by the action
$$\mathscr{R}^{X}_{t}h = \mathbb{E}\left[\left( X_{t+s} - \mu^X\right) \otimes \left( X_{s} - \mu^X \right)\right] =\mbox{cov} \left[ \langle X_0,h \rangle, X_t \right],\qquad h \in L^2[0,1],$$
and is a nuclear integral operator with integral kernel $r^X_t$. Assuming that the sequence $\mathscr{R}^X_t$ is nuclear-summable,
$$\sum_t\hs \mathscr{R}^{X}_t \hs_{1}<\infty,$$
 we may define the spectral density operator $\mathscr{F}^{X}_{\omega}$ at frequency $\omega\in[-\pi,\pi]$ as 
 $$\mathscr{F}_{\omega}^{XX}=\frac{1}{2\pi}\sum_t e^{-\mathbf{i}t\omega} \mathscr{R}^{X}_t.$$
 where $\mathbf{i}^2=-1$. This is a nuclear self-adjoint operator with integral kernel
$$f^{XX}_{\omega}(\tau,\sigma)=\frac{1}{2\pi}\sum_t e^{-\mathbf{i}t\omega} r^{X}_t(\tau,\sigma).$$

Given a second functional time series $\{Y_t\}$ satisfying the same (corresponding) assumptions, we may define the lag $t$ cross-covariance kernel as
$$r^{YX}_t(\tau,\sigma)= \mathbb{E}\left\{ \left( X_{t+s}(\tau) - \mu^X(\tau) \right) \left( Y_{s}(\sigma) - \mu^Y(\sigma) \right) \right\},\, \tau, \sigma \in [0,1] \mbox{ \& } t,s \in \mathbb{Z},$$
which in turn induces the lag $t$ cross-covariance operator $\mathscr{R}^{YX}_t:L^2[0,1]\rightarrow L^2[0,1]$ by
$$\mathscr{R}^{YX}_{t}h =\mathbb{E}\left[ \left( X_{t+s} - \mu^X \right)\otimes \left( Y_{s} - \mu^Y \right)\right]h=\mbox{cov} \left[ \langle Y_0,h \rangle, X_t \right],\qquad h \in L^2[0,1].$$
The cross-spectral density operator  $\mathscr{F}^{YX}_{\omega}$ at frequency $\omega\in[-\pi,\pi]$ is then defined as 
 $$\mathscr{F}_{\omega}^{YX}=\frac{1}{2\pi}\sum_t e^{-\mathbf{i}t\omega} \mathscr{R}^{YX}_t$$
with associated integral kernel
$$f^{YX}_{\omega}(\tau,\sigma)=\frac{1}{2\pi}\sum_t e^{-\mathbf{i}t\omega} r^{YX}_t(\tau,\sigma).$$ 

Finally, we will consider \emph{cumulant kernels} (and corresponding operators) as a means of quantifying the strength of temporal dependence in $\{X_{t}\}$ via Brillinger mixing conditions. Given any $(\tau_1,\ldots\tau_k)\in[0,1]^k$, we defined the order-$k$ cumulant kernel of $\{X_t\}$ as
$$ \mbox{cum}\left\{X_{t_{1}}(\tau_{1}), \ldots, X_{t_{k}}(\tau_{k})\right\} =  \sum_{\nu = (\nu_{1}, \ldots, \nu_{p})} (-1)^{p-1} (p-1)!  \prod_{l=1}^{p} \mathbb{E}\left\{ \prod_{j \in \nu_{l}} X_{t_{j}}(\tau_{j}) \right\},$$
with summation being over unordered partitions $\nu = (\nu_{1}, \ldots, \nu_{p})$ of $\{1, \ldots, k\}$. The kernel exists almost everywhere on $[0,1]^k$ provided that 
$\mathbb{E} \|X_{0}\|^{k} < \infty$.
A cumulant kernel of of order $2k$ gives rise to a corresponding \emph{$2k$-th order cumulant operator} $\mathscr{R}_{t_1,...,t_{2k-1}}: L^2([0,1]^k,\mathbb{R})\rightarrow L^2([0,1]^k,\mathbb{R})$, defined by right integration. More generally, we remark that any $g\in L^2([0,1]^{2k}, \R)$, induces a corresponding operator $\mathscr{G}$ on $L^2([0,1]^{k}$, which is defined as 
\begin{eqnarray*}
\mathscr{G} h(\tau_1,\ldots,\tau_k) = \int_{[0,1]^k} g(\tau_1,\ldots,\tau_{2k}) \times h(\tau_1,\ldots,\tau_k) d\tau_1 \ldots d\tau_k,
\end{eqnarray*}
provided the integral is well-defined.

\section{Functional Time Series Regression}\label{model_definition}

In the context of a functional time series regression, we will consider a collection of \emph{covariates} $\{X_t\}$ and associated \emph{responses} $\{Y_t\}$,  each comprising a strictly stationary time series of random elements in $L^2[0,1]$. A functional linear model for the pair $(X_t,Y_t)$ stipulates that the two time series are defined on the same probability space and are \emph{approximately linearly coupled}. That is, there exists a sequence of Hilbert-Schmidt operators $\{\mathscr{B}_t\}$ with integral kernels $\{b_t\}$,
$$\mathscr{B}_t: L^2\rightarrow L^2,\quad b_t(\sigma,\tau):[0,1]^2\rightarrow \mathbb{R},\quad (\mathscr{B}_tf)(\tau)=\int_0^1b_t(\sigma,\tau)f(\sigma)d\sigma, \quad f\in L^2,$$
and a collection of centred i.i.d. \emph{perturbations} in $L^2$, $\{\epsilon_t\}_{t\in\mathbb{Z}}$, such that
\begin{equation}\label{model}
Y_t=\sum_s\mathscr{B}_{t-s}X_s+\epsilon_t,\qquad t\in\mathbb{Z}.
\end{equation}
Notice that the temporal convolution is the only possible linear coupling, if both $X_t$ and $Y_t$ are to be stationary. We make the following natural assumptions:

\begin{assumptions}[Moment and Dependence Assumptions]\label{assumptions}
In the context of model \ref{model}, we assume:
\begin{itemize}
\item[(A1)] The \emph{filter} $\{\mathscr{B}_t\}$ is \emph{Hilbert-Schmidt} summable,
$$\sum_t \hs \mathscr{B}_t\hs_2=\sum_t\left(\int_0^1 \int_0^1 \big| b_t(\tau,\sigma)\big|^2 d\sigma d \tau\right)^{1/2}<\infty.$$

\item[(A2)] The i.i.d. perturbation process $\{\epsilon_t\}$ is independent of the covariate process $\{X_t\}$, and 
$$
\mathbb{E}\|X_t\|^2+\mathbb{E}\|\epsilon_t\|^2<\infty,\qquad \mathbb{E}[X_t]=\mathbb{E}[\epsilon_t]=0.
$$

\item[(A3)] The covariance operators $\{\mathscr{R}^X_t\}_{t\in\mathbb{Z}}$ are \emph{nuclear} summable,
$$\sum_t \hs \mathscr{R}^X_t\hs_1<\infty.$$
\end{itemize}
\end{assumptions}

Whenever Assumptions (A1)-(A3) are satisfied, it holds that $\{Y_t\}$ is also second order (\citet{bosq2012linear}), and possesses nuclear-summable covariance operators $\{\mathscr{R}^Y_t\}_{t\in\mathbb{Z}}$,
$$\mathbb{E}\|Y_t\|^2<\infty\qquad\&\qquad\sum_t \hs \mathscr{R}^Y_t\hs_1<\infty.$$ 
The statistical task at hand is to estimate the unknown sequence of operators (or \emph{filter}) $\{\mathscr{B}_t\}_{t\in\mathbb{Z}}$ on the basis of the observation of a finite stretch of $\{(X_t,Y_t);t=0,\hdots,T-1\}$. As usual, the $\epsilon_t$ are unobservable.

\section{Diagonalising the Problem by Fourier Transformation}\label{diagonalisation}

As with iid functional regression, the key to constructing estimators will be to establish a connection between the cross-covariance of $\{X_t\}$ with $\{Y_t\}$, and the sequence of operators $\{\mathscr{B}_t\}$. The next lemma does precisely that:
\begin{proposition} In the notation of Section \ref{notation}, and under Assumptions \ref{assumptions}, it holds that
$$\mathscr{R}^{YX}_t=\sum_u \mathscr{B}_{t-u}\mathscr{R}^{X}_u,\qquad\sum_{t}\hs \mathscr{R}^{YX}_t\hs_1<\infty,\qquad\mathscr{F}^{YX}_{\omega}= \Q_{\omega}\mathscr{F}^{XX}_{\omega},$$
where $\Q_{\omega}$ is the linear operator with kernel
$$f^B_{\omega}(\tau,\sigma) = \sum_t e^{-\mathbf{i} \omega t } b_t(\tau,\sigma),$$
and satisfies
$$\int_{-\pi}^{\pi}\hs \Q_{\omega}\hs_2^2d\omega=\sum_{t}\hs \mathscr{B}_{t}\hs_2^2<\infty.$$
\end{proposition}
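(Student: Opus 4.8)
The plan is to derive all three displayed relations from a single computation, the \emph{convolution identity} $\mathscr{R}^{YX}_t=\sum_u \mathscr{B}_{t-u}\mathscr{R}^X_u$, and then read off the summability, the frequency-domain factorisation, and the Parseval identity as corollaries. First I would record that under (A2) all means vanish, so the cross-covariance operator is $\mathscr{R}^{YX}_t=\E[\,Y_t\otimes X_0\,]$, acting by $h\mapsto\E[\langle X_0,h\rangle\,Y_t]$ (with the ordering chosen to match the stated identity). Substituting the model \eqref{model}, $Y_t=\sum_u\mathscr{B}_{t-u}X_u+\epsilon_t$, and using linearity gives $\mathscr{R}^{YX}_t h=\sum_u \mathscr{B}_{t-u}\,\E[\langle X_0,h\rangle\,X_u]+\E[\langle X_0,h\rangle\,\epsilon_t]$. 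The noise term vanishes because $\{\epsilon_t\}$ is independent of $\{X_t\}$ with $\E[\epsilon_t]=0$, while each surviving summand equals $\mathscr{B}_{t-u}\mathscr{R}^X_u h$ by the definition of the lag-$u$ covariance operator together with stationarity, yielding the claim.

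The one genuinely delicate point, and the step I expect to be the main obstacle, is justifying the interchange of the infinite sum over $u$ with the expectation, together with tracking the correct placement of $X$ versus $Y$ and any adjoints dictated by the tensor-product and kernel conventions of Section \ref{notation}. I would control the interchange by absolute convergence: since $\|\mathscr{B}_{t-u}X_u\|\le \hs\mathscr{B}_{t-u}\hs_{\infty}\|X_u\|\le \hs\mathscr{B}_{t-u}\hs_2\|X_u\|$, Cauchy--Schwarz and stationarity give $\sum_u \E\|\mathscr{B}_{t-u}X_u\|\le (\E\|X_0\|^2)^{1/2}\sum_u\hs\mathscr{B}_{t-u}\hs_2<\infty$ by (A1)--(A2), so the series defining $Y_t$ converges in $L^1(\Omega;L^2[0,1])$ and Fubini applies.

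Given the convolution identity, the remaining claims are routine. For nuclear summability I would use the triangle inequality followed by $\hs\mathscr{B}_{t-u}\mathscr{R}^X_u\hs_1\le\hs\mathscr{B}_{t-u}\hs_{\infty}\hs\mathscr{R}^X_u\hs_1\le\hs\mathscr{B}_{t-u}\hs_2\hs\mathscr{R}^X_u\hs_1$, and then Tonelli to factor the nonnegative double sum, obtaining $\sum_t\hs\mathscr{R}^{YX}_t\hs_1\le\big(\sum_t\hs\mathscr{B}_t\hs_2\big)\big(\sum_u\hs\mathscr{R}^X_u\hs_1\big)<\infty$ by (A1) and (A3). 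For the frequency-domain factorisation I would insert the convolution identity into the definition of $\mathscr{F}^{YX}_\omega$, substitute $v=t-u$, and split the now absolutely convergent double series (the summability just established licenses the rearrangement) into the product $\big(\sum_v e^{-\mathbf{i}v\omega}\mathscr{B}_v\big)\big(\tfrac{1}{2\pi}\sum_u e^{-\mathbf{i}u\omega}\mathscr{R}^X_u\big)=\Q_\omega\mathscr{F}^{XX}_\omega$; this is the operator-valued convolution theorem, and the $1/(2\pi)$ factors match since there is exactly one on each side.

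Finally, for the Parseval identity I would regard $\omega\mapsto\Q_\omega=\sum_t e^{-\mathbf{i}t\omega}\mathscr{B}_t$ as a function valued in the Hilbert space of Hilbert--Schmidt operators, equipped with inner product $\langle\mathscr{A},\mathscr{B}\rangle=\mathrm{trace}(\mathscr{B}^*\mathscr{A})$ and norm $\hs\cdot\hs_2$. The operators $\{\mathscr{B}_t\}$ are precisely its Fourier coefficients, so Plancherel's theorem for this Hilbert-space-valued Fourier series, after using $\int_{-\pi}^{\pi}e^{-\mathbf{i}(t-s)\omega}\,d\omega=2\pi\delta_{ts}$, gives $\int_{-\pi}^{\pi}\hs\Q_\omega\hs_2^2\,d\omega=\sum_t\hs\mathscr{B}_t\hs_2^2$ up to the normalisation constant fixed by the chosen convention. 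Finiteness is immediate from (A1) via $\ell^1\subset\ell^2$: summability of $\hs\mathscr{B}_t\hs_2$ forces $\sum_t\hs\mathscr{B}_t\hs_2^2<\infty$.
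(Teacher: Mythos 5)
Your proposal is correct and takes essentially the same route as the paper: the convolution identity $\mathscr{R}^{YX}_t=\sum_u\mathscr{B}_{t-u}\mathscr{R}^{X}_u$ obtained by exchanging expectation and sum under (A1)--(A2), nuclear summability via a Schatten--H\"older bound plus Tonelli, and the factorisation $\mathscr{F}^{YX}_{\omega}=\Q_{\omega}\mathscr{F}^{XX}_{\omega}$ by the standard convolution rearrangement, with Fubini licensed by that summability. Two small remarks. First, for the interchange step your stated domination is slightly off target: $L^1(\Omega;L^2)$-convergence of the series defining $Y_t$ does not by itself let you pass $\E[\langle X_0,h\rangle(\cdot)]$ inside the sum, since $\langle X_0,h\rangle$ is unbounded; what is needed is the weighted bound
\begin{align*}
\sum_u\E\big[|\langle X_0,h\rangle|\,\|\mathscr{B}_{t-u}X_u\|\big]\le\|h\|\sum_u\hs\mathscr{B}_{t-u}\hs_2\,\E\big[\|X_0\|\,\|X_u\|\big]\le\|h\|\big(\E\|X_0\|^2\big)\sum_u\hs\mathscr{B}_{t-u}\hs_2<\infty,
\end{align*}
i.e.\ one further Cauchy--Schwarz on $\E[\|X_0\|\,\|X_u\|]$, which is exactly what the paper does (equivalently, your own estimate upgraded to $L^2(\Omega)$-norms suffices); this is a one-line fix using tools you already invoke, not a structural flaw. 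Second, your Plancherel argument for $\int_{-\pi}^{\pi}\hs\Q_{\omega}\hs_2^2\,d\omega=\sum_t\hs\mathscr{B}_t\hs_2^2$, with finiteness from $\ell^1\subset\ell^2$ applied to $\{\hs\mathscr{B}_t\hs_2\}$, is a genuine addition: the paper's proof stops at the convolution formula and never addresses this last claim, so your treatment (including the caveat about the $2\pi$ normalisation implicit in the definition of $f^B_{\omega}$) completes a part the paper leaves implicit.
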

\begin{proof}
We begin by noting that given any $f\in L^2[0,1]$, we have
$$
\left[\left(\sum_u \mathscr{B}_{t-u}X_u\right)\otimes X_0\right] f= \langle X_0,f\rangle\sum_u \mathscr{B}_{t-u}X_u
=\sum_u \mathscr{B}_{t-u}\langle X_0,f\rangle X_u.
$$
As a result, it holds that
$$\mathbb{E}\left[\sum_u \mathscr{B}_{t-u}\langle X_0,f\rangle X_u\right]=\sum_u \mathscr{B}_{t-u}\mathbb{E}\left[\langle X_0,f\rangle X_u\right]$$
using Fubini's theorem and the fact that
\begin{eqnarray*}
\sum_u \mathbb{E}\|\mathscr{B}_{t-u}\langle X_0,f\rangle X_u\|&\leq&\sum_u \mathbb{E} \left[\hs \mathscr{B}_{t-u} \hs_2  \hs X_u\otimes X_0\hs_2\|f\|\right]\\
&\leq&\|f\|  \sum_u \hs \mathscr{B}_{t-u}\hs_2 \mathbb{E} \left[ \| X_u\| \|X_0\|\right]\\
&\leq&\|f\|^2  \sqrt{\mathbb{E}[ \|X_u\|^2]\mathbb{E}[\|X_0\|^2]} \sum_u \hs \mathscr{B}_{t-u}\hs_2\\
&<&\infty.
\end{eqnarray*}
Consequently, since $\{\epsilon_t\}$ is uncorrelated with $\{X_t\}$, we have that
$$\mathscr{R}^{YX}_tf=\mathbb{E}\left[\sum_u \mathscr{B}_{t-u}\langle X_0,f\rangle X_u\right]=\sum_u \mathscr{B}_{t-u}\mathbb{E}\left[\langle X_0,f\rangle X_u\right]=\sum_u\mathscr{B}_{t-u}\mathscr{R}_uf$$
which proves the first part of the proposition, since $f\in L^2[0,1]$ was arbitrary. In order to show that $\mathscr{R}^{YX}_t$ is nuclear-summable, we use H\"older's inequality for Schatten norms, and Tonelli's theorem to write
$$\sum_t\hs \mathscr{R}^{YX}_t\hs_1\leq \sum_t\sum_u\hs \mathscr{B}_{t-u}\mathscr{R}^X_{u}\hs_1\leq \sum_u\sum_t\hs \mathscr{B}_{t-u}\hs_2 \hs\mathscr{R}^X_{u}\hs_2\leq \sum_u\hs \mathscr{B}_{t-u}\hs_2\sum_t \hs\mathscr{R}^X_{u}\hs_2<\infty.$$
It follows that the Fourier transform $\mathscr{F}^{YX}_{\omega}$ of $\mathscr{R}^{YX}_t$ is well-defined. Following the standard manipulations leading to the convolution formula, we have
\begin{eqnarray*}
\mathscr{F}^{YX}_{\omega}&=&\frac{1}{2\pi}\sum_t e^{-\mathbf{i} t \omega} \mathscr{R}^{YX}_t\\
&=&\frac{1}{2\pi}\sum_t e^{-\mathbf{i} t \omega}\sum_u\mathscr{B}_{t-u}\mathscr{R}_{u}\\
&=&\frac{1}{2\pi}\sum_t \sum_ue^{-\mathbf{i} (t-u) \omega}\mathscr{B}_{t-u}e^{-\mathbf{i} u\omega}\mathscr{R}_{u}\\
&=&\sum_te^{-\mathbf{i} (t-u) \omega}\mathscr{B}_{t-u}\frac{1}{2\pi}\sum_u e^{-\mathbf{i} u \omega}\mathscr{R}_{u}\\
&=&\Q_{\omega}\mathscr{F}^{XX}_{\omega}.
\end{eqnarray*}
Here, we have made use of Fubini's theorem, noting that
$$\sum_u \sum_t \hs e^{-\mathbf{i} t \omega}\mathscr{B}_{t-u}\mathscr{R}^X_{u}\hs_1=\sum_u \sum_t \hs \mathscr{B}_{t-u}\mathscr{R}^X_{u}\hs_1<\infty.$$

\end{proof}

When $\mathscr{F}^{XX}_{\omega}$ is strictly positive uniformly over $\omega$ (so that its range is $L^2([0,1],\mathbb{C})$ itself), then the proposition implies that
\begin{equation}\label{eq:inversion}
\Q_{\omega} = \mathscr{F}^{YX}_{\omega} \big(\mathscr{F}^{XX}_{\omega}\big)^{-1} .
\end{equation}
It follows that the operator $\mathscr{B}_t$ can be deduced by inverse Fourier transforming,
\begin{align*}
\mathscr{B}_t = \int_{-\pi}^{\pi} \Q_{\omega} \exp\big\{-\mathbf{i} t \omega \big\} d \omega.
\end{align*}
This allows us to formulate an estimation strategy in the Fourier domain, as described in the next section.

\section{Methodology for Estimation} \label{estimation}

The results in the previous Section suggest the following estimation strategy, when we have a finite stretch $\{(X_t,Y_t)\}_{t=0}^{T-1}$ of length $T$ of the coupled series at our disposal. 
\begin{enumerate}
\item Estimate $\mathscr{F}^{XX}_{\omega}$ and $\mathscr{F}^{YX}_{\omega}$ nonparametrically, say by $\widehat{\mathscr{F}}^{XX}_{\omega,T}$ and $\widehat{\mathscr{F}}^{YX}_{\omega,T}$. This can be done using the approach introduced in \cite{panaretos2013fourier}, described in more detail in Section \ref{F_estimation}.

\item Construct a regularised estimator of $(\mathscr{F}^{XX}_{\omega})^{-1}$ based on $\widehat{\mathscr{F}}^{XX}_{\omega,T}$. Notice that regularisation is necessary, as the operator $\widehat{\mathscr{F}}^{XX}_{\omega,T}$ will be of finite rank and its maximal eigenvalue will diverge as $T$ grows. We consider this problem in Section \ref{Finverse_estimation}.

\end{enumerate}

Once these two steps have been completed, one can plug the corresponding estimators into Equation \ref{eq:inversion}, to obtain the regularised estimator of $\Q_{\omega}$, and consequently of $\mathscr{B}_t$. This is defined in Section \ref{B_estimator}.

\subsection{Estimation of $\mathscr{F}^{XX}_{\omega}$ and $\mathscr{F}^{YX}_{\omega}$}\label{F_estimation}
Following \cite{panaretos2013fourier}, let $W(x):\mathbb{R}\rightarrow  (0,\infty)$ be a positive real function such that 
\begin{enumerate}
	\item $W$ is of bounded variation.
	\item $W(x) = 0$ if $|x| \geq 1$.
	\item $\int_{-\infty}^{\infty} W(x)d x = 1,$
	\item $\int_{-\infty}^{\infty} W(x)^{2} d x < \infty.$
\end{enumerate}
Define a kernel of bandwidth $B_T$ as
\begin{align*}
W^{(T)}(x) = \frac{1}{B_T} \sum_{k\in \mathbb{Z}} W\Big(\frac{x + 2k \pi }{B_T} \Big).
\end{align*}
We will use this kernel in order to construct estimators in the frequency domain. Specifically, defining the discrete Fourier transforms of the two time series as
$$\widetilde{X}_{\omega,T} = \frac{1}{\sqrt{T}}\sum_{t=0}^{T-1} X_t \exp\{-\mathbf{i} \omega t \}\qquad\&\qquad \widetilde{Y}_{\omega,T} = \frac{1}{\sqrt{T}} \sum_{t=0}^{T-1} Y_t \exp\{-\mathbf{i} \omega t \},$$
the periodogram operator of $\{X_t\}$ at frequency $\omega$ (and its corresponding kernel) will be given by the empirical covariance (and its corresponding kernel) of the discrete Fourier transform at frequency $\omega$,
$$\mathscr{P}^{XX}_{\omega,T}=\widetilde{X}_{\omega,T}\otimes \overline{\left(\widetilde{X}_{\omega,T}\right)} \qquad\&\qquad p_{\omega}^{(T)}(\tau,\sigma) =  \widetilde{X}_{\omega,T}(\tau)\otimes \overline{\left(\widetilde{X}_{\omega,T}(\sigma)\right)}.$$
Similarly, the empirical cross-covariance of the discrete Fourier transforms of $X$ and $Y$ yields the cross-periodogram operator,
$$\mathscr{P}^{YX}_{\omega,T}=\widetilde{Y}_{\omega,T}\otimes \overline{\left(\widetilde{X}_{\omega,T}\right)}.$$
These can be smoothed using $W^{(T)}$, in order to yield the estimators of the spectral density operator of $X$ (and spectral density kernel), and of the cross-spectral density operator of $(X,Y)$,
\begin{equation}\label{FXX_estimator}
\widehat{\mathscr{F}}^{XX}_{\omega,T}= \frac{1}{T} \sum_{s=0}^{T-1} W^{(T)}(\omega - \nu_s)\mathscr{P}^{XX}_{\omega,T}\quad\&\quad f_{\omega}^{(T)}(\tau,\sigma) = \frac{1}{T} \sum_{s=0}^{T-1} W^{(T)}(\omega - \nu_s) p_{\nu_s}^{(T)}(\tau,\sigma),
\end{equation}
\begin{equation}\label{FXY_estimator}
\widehat{\mathscr{F}}^{YX}_{\omega,T}= \frac{1}{T}\sum_{s=0}^{T-1} W^{(T)}(\omega - \nu_s)\mathscr{P}^{YX}_{\nu_s,T}
\end{equation}
where
$$\nu_s=\frac{2\pi s}{T},\qquad s=0,1,\ldots,T-1$$

\subsection{Regularised Estimation of $(\mathscr{F}^{XX}_{\omega})^{-1}$}\label{Finverse_estimation}

Once we have the estimators $\widehat{\mathscr{F}}^{XX}_{\omega,T}$ and $\widehat{\mathscr{F}}^{XX}_{\omega,T}$, a naive approach to estimating $\Q_{\omega}$ would be to use the estimator
$$\widehat{\mathscr{F}}^{YX}_{\omega,T} \left(\widehat{\mathscr{F}}^{XX}_{\omega,T}\right)^{\dagger}$$
where $\left(\widehat{\mathscr{F}}^{XX}_{\omega,T}\right)^{\dagger}$ is the pseudoinverse of $\widehat{\mathscr{F}}^{XX}_{\omega,T}$. However, as can clearly be seen using the spectral decompositions
$$\widehat{\mathscr{F}}^{XX}_{\omega,T}=\sum_{n=1}^{T}\widehat{\lambda}_{n}^{\omega}\widehat{\varphi}_{n}(\omega)\otimes\widehat{\varphi}_n(\omega),\quad \left(\widehat{\mathscr{F}}^{XX}_{\omega,T}\right)^{\dagger}=\sum_{n=1}^{T}(\widehat{\lambda}_{n}^{\omega})^{-1}\widehat{\varphi}_{n}(\omega)\otimes\widehat{\varphi}_n(\omega),$$
the eigenvalues of $\left(\widehat{\mathscr{F}}^{XX}_{\omega,T}\right)^{\dagger}$ will not remain bounded as $T$ diverges when $\widehat{\mathscr{F}}^{XX}_{\omega,T}$ is consistent for $\mathscr{F}_{\omega}^{XX}$ (the latter being nuclear).

This effect will generally \emph{not} be annihilated by the application of the integral operator $\widehat{\mathscr{F}}^{YX}_{\omega,T} $ from the left, when forming the naive estimator $\widehat{\mathscr{F}}^{YX}_{\omega,T} \left(\widehat{\mathscr{F}}^{XX}_{\omega,T}\right)^{\dagger}$. The problem is that the spectrum of $\widehat{\mathscr{F}}^{YX}_{\omega,T} $ depends on $\Q_{\omega}$, which a priori has no structural relationship with $\mathscr{F}^{XX}_{\omega}$. Said differently, if $\widehat{\mathscr{F}}^{YX}_{\omega,T} $ is expanded in the tensor product basis given by the eigenfunctions of $\mathscr{F}^{XX}_{\omega}$ (extended to a complete system),
$$\widehat{\mathscr{F}}^{YX}_{\omega,T}=\sum_{n,m}\widehat a_{n,m}^{\omega}\widehat{\varphi}_{n}(\omega)\otimes\widehat{\varphi}_m(\omega)$$
 there is no reason to expect that the resulting basis coefficients $\{\widehat a_{n,m}^{\omega}\}$ will decay sufficiently fast for the products $\widehat a_{n,m}^{\omega}(\widehat{\lambda}_n^{\omega})^{-1}$ to remain bounded in $n$ as $T$ grows. Thus, it is necessary to use some form of regularisation. Two classical strategies are:
\begin{itemize}

\item[(i)] \emph{Spectral truncation}. Here, one would replace the generalised inverse $\left(\widehat{\mathscr{F}}^{XX}_{\omega,T}\right)^{\dagger}$ by
$$\sum_{n=1}^{K(T)}(\widehat{\lambda}_{n}^{\omega})^{-1}\widehat{\varphi}_{n}(\omega)\otimes\widehat{\varphi}_n(\omega)$$
where $K(T)<T$ grows sufficiently slowly in order to control the terms $a_{n,m}^{\omega}(\widehat{\lambda}_n^{\omega})^{-1}$  
\item[(ii)] \emph{Tikhonov regularisation}. Here, one would replace the generalised inverse $\left(\widehat{\mathscr{F}}^{XX}_{\omega,T}\right)^{\dagger}$ by a ridge-regularised inverse
$$\big[ \widehat{\mathscr{F}}^{XX}_{\omega,T} + {\zeta_T} \mathscr{I}  \big]^{-1}=\sum_{n=1}^{\infty}(\zeta_T+\widehat{\lambda}_n^{\omega})^{-1}\widehat{\varphi}_{n}(\omega)\otimes\widehat{\varphi}_n(\omega)$$
where $\zeta_T$ decays to zero sufficiently slowly in order to control the behaviour of the terms $a_{n,m}^{\omega}(\zeta_T+\widehat{\lambda}_n^{\omega})^{-1}$.

\end{itemize}

\noindent The first approach (spectral truncation) is essentially the approach described by \citet[Equation 3.4]{hormann2015estimation}. It can be seen as the extension of functional PCA regression (e.g. \citet{HH07}, \citet{imaizumi2016pca}) to the case of functional time series. 
\citet{hormann2015estimation} choose the value $K(T)$ to be dependent on the rate of decay of $\sup_{\omega}\hs\widehat{\mathscr{F}}^{XX}_{\omega}-\mathscr{F}^{XX}_{\omega}\hs_{\infty}$ and $\sup_{\omega}\hs\widehat{\mathscr{F}}^{YX}_{\omega}-\mathscr{F}^{YX}_{\omega}\hs_{\infty}$ (assumed known), in a way that guarantees consistency of the estimator eventually constructed. In principle, one could be more ambitious, and use a a frequency-dependent truncation level $K(T,\omega)$, but it seems unlikely to have detailed enough information on the decay rates $\hs\widehat{\mathscr{F}}^{XX}_{\omega}-\mathscr{F}^{XX}_{\omega}\hs_{\infty}$ and $\hs\widehat{\mathscr{F}}^{YX}_{\omega}-\mathscr{F}^{YX}_{\omega}\hs_{\infty}$ at each frequency $\omega$. 

Though spectral truncation is a very popular technique in the i.i.d. case, it poses some challenges both in terms of theoretical study, as well as practical performance, which might be exacerbated in the dependent case:
\begin{enumerate}
\item To this date, and to the best of our knowledge, there are no results concerning the MSE convergence rates for the spectral truncation estimator, even in the i.i.d. case. \citet{HH07} (and \citet{imaizumi2016pca}) establish rates of convergence for small ball probabilities, but not for the MSE itself. \citet{HH07} explain that to upgrade to MSE results, the spectral truncation estimator needs to be modified to a non-linear truncated version (see the discussion after \citet[Theorem 1]{HH07}) and also \citet[Theorem 5, Appendix A.2]{HHN06}). It thus seems that in the more challenging weakly dependent case, spectral truncation may not be the most fruitful avenue to obtain MSE convergence rates.

\item In practical terms, a challenge that spectral truncation encounters is in the case where eigenvalues are nearly tied, because the chosen  subspace $\{\varphi_n(\omega)\}_{j=1}^{K(T)}$ makes no reference to the quantity of interest $\Q_{\omega}$. Specifically, $\Q_{\omega}$ might be well expressed in some but not other eigenfunctions of  $\widehat{\mathscr{F}}^{XX}_{\omega}$, and this irrespectively of the size of the corresponding eigenvalues (according to which the truncation is performed). Thus, if the eigenvalues $\{\widehat\lambda^{\omega}_{K\pm j}\}_{j=1}^{J(\omega)}$ of $\widehat{\mathscr{F}}^{XX}_{\omega}$ are nearly tied, the sample variability of the estimator will increase, if this is well expressed in some but not all of the eigenfunctions of order $\{K\pm j; j=1,...,J(\omega)\}$. Intuitively: a certain term that is highly correlated with $\Q_{\omega}$ may come in or be left out of the truncation simply because of sample variability, leading to variance inflation. This phenomenon was doscumented by \citet{HH07} in the standard functional linear model, and can be a serious issue in the time series case, since we are considering a whole range frequencies, and thus of approximate eigenvalue ties $\{\widehat\lambda^{\omega}_{K\pm j}:j=1,...,J(\omega); \omega\in[-\pi,\pi]\}$.

\end{enumerate}

\citet{HH07} introduced and studied Tikhonov regularisation as an alternative that circumvents these issues. Indeed, they were able to deduce convergence rates for the MSE of the Tikhonov estimator, as opposed to the small ball probability rates for spectral truncation. For these two reasons, we will follow the Tikhonov approach here, defining $\big[ \widehat{\mathscr{F}}^{XX}_{\omega,T} + \zeta_T \mathscr{I}\big]^{-1} $ to be the (regularised) estimator of $\big[\mathscr{F}_{\omega}^{XX}\big]^{-1}$. We put all the elements together in the next section, to define our estimator.

\subsection{The Smoothed Fourier-Tikhonov Estimator of $\{\mathscr{B}_t\}$}

Following the discussion in the two previous subsection, let $B_T>0$ be a bandwidth and $\zeta_T$ a Tikhonov parameter. The (smoothed) Fourier-Tikhonov estimator of $\{\mathscr{B}_t\}_t$ is defined to be
\begin{equation}\label{B_estimator}
\widehat{\mathscr{B}}_t =  \int_{-\pi}^{\pi} \widehat{\Q}_{\omega,T} \exp\big\{-\mathbf{i} t\omega \big\} d\omega
\end{equation}
where
\begin{equation}\label{Q_estimator}
\widehat{\Q}_{\omega,T} =\widehat{\mathscr{F}}^{YX}_{\omega,T}  \big[ \widehat{\mathscr{F}}^{XX}_{\omega,T} + \zeta_T \mathscr{I}\big]^{-1} 
\end{equation}
is the estimator of $\Q_{\omega}$. Recall here that $\widehat{\mathscr{F}}^{YX}_{\omega,T}$ and $ \widehat{\mathscr{F}}^{XX}_{\omega,T}$ are the smoothed periodogram and smoothed cross-periodogram estimators defined in Section \ref{F_estimation} (see Equations \ref{FXX_estimator} and \ref{FXY_estimator}). The asymptotic performance of our estimator, and its dependence on the choice of $\zeta_T$ is investigated in the next Section.

\section{Rate of Convergence}\label{rate}

In this section, we state the main result of this paper, concerning the rate of convergence of the MSE of the Smoothed Fourier-Tikhonov Estimator \ref{B_estimator}. We begin by noting that one can establish consistency (without a rate of convergence), by letting $B_T\rightarrow 0$ and $TB_T\rightarrow\infty$ as $T\rightarrow \infty$, provided that the decay rate of $\zeta_T$ is taken to be a suitable function of $\sup_{\omega}\hs\widehat{\mathscr{F}}^{XX}_{\omega}-\mathscr{F}^{XX}_{\omega}\hs_{\infty}$ and $\sup_{\omega}\hs\widehat{\mathscr{F}}^{YX}_{\omega}-\mathscr{F}^{YX}_{\omega}\hs_{\infty}$. This would follow similar steps as \citet{hormann2015estimation}, but adapted to the case of Tikhonov regularisation, and would \emph{not} require any structural assumptions on the rate of decay of $\{\lambda_n^{\omega}\}$ or indeed on the spectra of $\{\mathscr{B}_t\}$, just as the results of \citet{hormann2015estimation} did not either.

However, we would like to be able to make more refined statements, and in particular to establish convergence rates, in the form of a rate of decay for the mean square error
\begin{align*}
\mathbb{E}\left\{\sum_t\hs\mathscr{B}_t -\widehat{\mathscr{B}}_t  \hs_{2}^2\right\}  = \mathbb{E}\left\{\int_{0}^{2\pi}\hs \Q_{\omega} - \widehat{\Q}_{\omega,T}\hs_{2}^2\right\} d\omega,
\end{align*}
where equality follows from Parseval's relation. Such rates will necessarily depend on the decay rate of $\{\lambda_n^{\omega}\}$, and furthermore on the spectra of $\{\mathscr{B}_t\}$. Our goal will thus be to establish a convergence rate that links these behaviours, and illustrates their interplay with the tuning parameters $B_T$ and $\zeta_T$. 

We will work in the so-called \emph{mildly ill-posed} setting, where the spectra involved exhibit a polynomial decay. Specifically, recall that $ \mathscr{F}^{YX}_{\omega}$, $\mathscr{F}^{XX}_{\omega}$ and $\mathscr{F}^B$ have integral kernels admitting series representations
\begin{align*}
f^{XX}_{\omega}(\tau,\sigma) &= \sum_{i=1}^{\infty} \lambda_i^{\omega} \varphi_i^{\omega}(\tau) \overline{\varphi_i^{\omega}(\sigma)}\\
f^{YX}_{\omega}(\tau,\sigma) &= \sum_{i,j=1}^{\infty} a_{ij}^{\omega} \varphi_i^{\omega}(\tau) \overline{\varphi_j^{\omega}(\sigma)} \\
f^B_{\omega}(\varsigma,\tau) & = \sum_{i,j=1}^{\infty} b_{ij}^{\omega} \varphi_i^{\omega}(\varsigma) \overline{\varphi_j^{\omega}(\tau)}.
\end{align*}

Our necessary further assumptions are collected below.
\begin{assumptions}[Ill-Posedness, Spectral Smoothness, and Weak Dependence]\label{assumptions}
In the context of model \ref{model}, we assume:
\begin{itemize}
\item[(B1)] For all $j$ and $\omega$ it holds that
$$\lambda_j^{\omega} \asymp C j^{-\alpha}, \qquad \sum_i\big| b_{ij}^{\omega}\big| \leq C j^{-\beta}.$$
with $\alpha > 1$, $\beta > 1/2$ and $\alpha < \beta + 1/2$ . 

\item[(B2)] Whenever $\varphi_i^{\omega}$ is an eigenfunction of $\mathscr{F}^{XX}_{\omega}$, then so is its complex conjugate $\overline{\varphi_i^{\omega}}$. That is,  
$$\Big\{\varphi_i^{\omega} :i  =1,2,\ldots \Big\} = \Big\{\overline{\varphi_i^{\omega}} :  i = 1,2,\ldots \Big\}.$$
Therefore, for each $i$, there exists unquely an index $i^{'}$ such that  $\big \langle \varphi_i^{\omega} , \varphi_{i'}^{\omega} \big \rangle=1$ and $ \big \langle \varphi_i^{\omega} , \varphi_j^{\omega}\big \rangle=0$ when $ j\neq i^{'}$.

\item[(B3)]  The functions $r_t^X$ and $b_t$ are continuous for all $t \in \mathbb{Z}$ with respect to $\tau, \sigma \in [0,1]$ and
\begin{eqnarray*}
&&\sum_{t \in \mathbb{Z}} |t|^{p+2} \big\| r_t^X(\tau,\sigma) \big\|_{\infty} < \infty \\
&&\sum_{t \in \mathbb{Z}} |t|^{p+2} \big\| b_t(\tau,\sigma) \big\|_{\infty} < \infty.
\end{eqnarray*}

\item[(B4)]  
\begin{eqnarray*}
&&\sum_{t \in \mathbb{Z}} |t|^{p+5} \hs \mathscr{R}_t \hs_1 < \infty \\
&&\sum_{t \in \mathbb{Z}} |t|^{p+5} \hs \B_t\hs_1 < \infty.
\end{eqnarray*}

\item[(B5)] The kernel $W$ is uniformly bounded,  compactly supported and {even}  on $[-1,1]$
 $\int_{\mathbb{R}} W(\alpha) d\alpha = 1$;
 There exists a positive integer $p$ such that $B_T^{p+1} < T^{-1}$ and for $j\leq p-1$.
\begin{align*}
\int_{\mathbb{R}} W(\alpha) \alpha^j d\alpha = 0.
\end{align*}

\item[(B6)] There exists constant $C<\infty$  such that
\begin{align*}
 \sum_{t_1,t_2,t_3 \in \mathbb{Z}} \hs{\text{cum}} (X_{t_1}, X_{t_2}, X_{t_3}, X_0) \hs_{1}  < C.
\end{align*}

\end{itemize}
\end{assumptions}

Condition (B1) is the direct extension of the mild ill-posedness conditions of \citet{HH07} to the time series context (see \citet[Section 3]{HH07} for a detailed discussion; \citet[Section 3.1]{imaizumi2016pca} also introduce the same conditions in the function-on-function regression case)\footnote{Note, however, that we do not need to make any assumption on the separation of the eigenvalues, since Tikhonov regularisation is immune to eigenvalue ties.}. Condition (B2) assumes that the set of eigenfunctions is closed under conjugation (i.e. that if a function is an eigenfunction, then its complex conjugate will also be an eigenfunction). The conditions in (B3) can be seen as weak dependence conditions that suffice for the existence of Taylor expansions of sufficiently high order of the spectral density operator and the Fourier transform of the filter with respect to the frequency argument. Finally, conditions (B4) and (B6) are also weak dependence conditions of Brillinger-type, that are sufficient for the establishment of convergence rates of the spectral density estimator to its estimand (as in \citet{panaretos2013fourier}). Finally, (B5) is a standard higher order kernel assumption often encountered in density estimation and deconvolution.

\smallskip
We are now able to state our main result:

\begin{theorem}[Rate of Convergence]\label{main_theorem}
Let $\{\widehat{\mathscr{B}}_t\}$ be the Fourier-Tikhonov estimator \ref{B_estimator} of the coefficients $\{\mathscr{B}_t\}$  in the functional time series regression model \ref{model} satisfying Assumptions \ref{assumptions}. Then, under conditions (B1)-(B7), there exists a sequence of events $G_T$ such that $\mathbb{P}\big[G_T \big] \rightarrow 1$, and
\begin{equation}\label{rate}
\mathbb{E}\left[\sum_t\hs\mathscr{B}_t -\widehat{\mathscr{B}}_t  \hs_{2}^2\,;\, {G_T}\right] = \frac{1}{B_T}O\Big(T^{-(  2\beta - 1)/(\alpha + 2\beta)} \Big),
\end{equation}
provided the Tikhonov parameter satisfies $\zeta_T = T^{-\alpha/(\alpha + 2\beta)}$ and the bandwidth satisfies $B_T = T^{-\gamma}$, with $\gamma$ such that 
$$\frac{\alpha -1}{\alpha+2\beta} < \gamma <  \frac{2\beta -\alpha}{\alpha + 2\beta}.$$ 
\end{theorem}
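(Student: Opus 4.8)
The plan is to work entirely in the frequency domain, using the Parseval identity recorded above to reduce the claim to a uniform-in-$\omega$ bound on $\mathbb{E}\big[\int_0^{2\pi}\hs \Q_\omega-\widehat{\Q}_{\omega,T}\hs_2^2\,d\omega\,;\,G_T\big]$, and then to split the integrand around the \emph{population} Tikhonov target $\Q_\omega^{(\zeta_T)}:=\mathscr{F}^{YX}_\omega\big[\mathscr{F}^{XX}_\omega+\zeta_T\mathscr{I}\big]^{-1}$ into a deterministic regularisation bias $\Q_\omega-\Q_\omega^{(\zeta_T)}$ and a stochastic estimation error $\Q_\omega^{(\zeta_T)}-\widehat{\Q}_{\omega,T}$. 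First I would dispose of the bias. Substituting the factorisation $\mathscr{F}^{YX}_\omega=\Q_\omega\mathscr{F}^{XX}_\omega$ from the Proposition and using $\mathscr{I}-\mathscr{A}(\mathscr{A}+\zeta\mathscr{I})^{-1}=\zeta(\mathscr{A}+\zeta\mathscr{I})^{-1}$ gives the clean identity $\Q_\omega-\Q_\omega^{(\zeta_T)}=\zeta_T\,\Q_\omega\big[\mathscr{F}^{XX}_\omega+\zeta_T\mathscr{I}\big]^{-1}$. Expanding in the eigenbasis $\{\varphi_i^\omega\}$ of $\mathscr{F}^{XX}_\omega$ yields $\hs\Q_\omega-\Q_\omega^{(\zeta_T)}\hs_2^2=\zeta_T^2\sum_{i,j}|b_{ij}^\omega|^2(\lambda_j^\omega+\zeta_T)^{-2}$, and bounding $\sum_i|b_{ij}^\omega|^2\le(\sum_i|b_{ij}^\omega|)^2\le C^2 j^{-2\beta}$ turns this into the classical Hall--Horowitz sum $\zeta_T^2\sum_j j^{-2\beta}(\lambda_j^\omega+\zeta_T)^{-2}$. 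Here the constraint $\alpha<\beta+1/2$ is exactly what makes the small-$j$ block of the sum converge at its upper endpoint, producing the order $\zeta_T^{(2\beta-1)/\alpha}$; with $\zeta_T=T^{-\alpha/(\alpha+2\beta)}$ this is $T^{-(2\beta-1)/(\alpha+2\beta)}$, uniformly in $\omega$.

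Next I would construct the event $G_T$. Writing $\Delta^{XX}_\omega=\widehat{\mathscr{F}}^{XX}_{\omega,T}-\mathscr{F}^{XX}_\omega$ and $\Delta^{YX}_\omega=\widehat{\mathscr{F}}^{YX}_{\omega,T}-\mathscr{F}^{YX}_\omega$, I would take $G_T=\{\sup_\omega\hs\Delta^{XX}_\omega\hs_\infty\le\delta_T\}\cap\{\sup_\omega\hs\Delta^{YX}_\omega\hs_\infty\le\delta_T\}$ for a deterministic $\delta_T$. The smoothed-periodogram theory of \citet{panaretos2013fourier}, which holds under the Brillinger-type conditions (B3), (B4) and (B6), delivers a stochastic fluctuation of order $(TB_T)^{-1/2}$ and, through the higher-order kernel (B5), a frequency-smoothing bias of order $B_T^{p}$; the two bandwidth constraints on $\gamma$ are precisely what balance these against the regularisation level, the \emph{upper} bound $\gamma<(2\beta-\alpha)/(\alpha+2\beta)$ being literally the statement $(TB_T)^{-1/2}=o(\zeta_T)$, and the \emph{lower} bound $\gamma>(\alpha-1)/(\alpha+2\beta)$ keeping the smoothing bias, once amplified through the inverse, below the leading stochastic term. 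One may then take $\delta_T=o(\zeta_T)$ and conclude $\mathbb{P}[G_T]\to1$. On $G_T$ one has $\hs S_\omega\Delta^{XX}_\omega\hs_\infty\le\delta_T/\zeta_T<1$ with $S_\omega:=[\mathscr{F}^{XX}_\omega+\zeta_T\mathscr{I}]^{-1}$, so the resolvent identity gives the convergent Neumann expansion $[\widehat{\mathscr{F}}^{XX}_{\omega,T}+\zeta_T\mathscr{I}]^{-1}=S_\omega-S_\omega\Delta^{XX}_\omega S_\omega+\Xi_\omega$, whose remainder $\Xi_\omega$ is of relative order $(\delta_T/\zeta_T)^2$.

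Inserting this into $\widehat{\Q}_{\omega,T}-\Q_\omega^{(\zeta_T)}$, the linear part is $\Delta^{YX}_\omega S_\omega-\mathscr{F}^{YX}_\omega S_\omega\Delta^{XX}_\omega S_\omega$, and the crux is to show its mean square already attains the advertised rate. Expanding $\Delta^{YX}_\omega S_\omega$ in $\{\varphi_i^\omega\}$ gives $\mathbb{E}\hs\Delta^{YX}_\omega S_\omega\hs_2^2=\sum_{i,j}\mathbb{E}|(\Delta^{YX}_\omega)_{ij}|^2(\lambda_j^\omega+\zeta_T)^{-2}$, and the second-order structure of the smoothed cross-periodogram yields $\mathbb{E}|(\Delta^{YX}_\omega)_{ij}|^2\asymp (TB_T)^{-1}(\mathscr{F}^{YY}_\omega)_{ii}\lambda_j^\omega$. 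Summing $i$ against $\mathrm{trace}(\mathscr{F}^{YY}_\omega)<\infty$ reduces everything to the scalar sum $\sum_j\lambda_j^\omega(\lambda_j^\omega+\zeta_T)^{-2}$, which by (B1) is of order $\zeta_T^{-(\alpha+1)/\alpha}$; hence $\mathbb{E}\hs\Delta^{YX}_\omega S_\omega\hs_2^2\asymp (TB_T)^{-1}\zeta_T^{-(\alpha+1)/\alpha}=B_T^{-1}T^{-(2\beta-1)/(\alpha+2\beta)}$ after substituting $\zeta_T$. The same computation for $\mathscr{F}^{YX}_\omega S_\omega\Delta^{XX}_\omega S_\omega$, now using $a_{ij}^\omega=b_{ij}^\omega\lambda_j^\omega$ and $\sum_j j^{-2\beta}\lambda_j^\omega<\infty$ from (B1), gives the same order, so the variance contributes $B_T^{-1}T^{-(2\beta-1)/(\alpha+2\beta)}$. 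Since $B_T\to0$ this dominates the bias of the first step; integrating the uniform bounds over $[0,2\pi]$ (uniformity coming from (B1) holding at every $\omega$, and (B2) guaranteeing the conjugation symmetry that keeps $\widehat{\mathscr{B}}_t$ real) then yields the stated rate.

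The principal obstacle I anticipate is the variance step combined with the control of the Neumann remainder. Unlike the i.i.d. case, the variance of each smoothed cross-periodogram coefficient $\mathbb{E}|(\Delta^{YX}_\omega)_{ij}|^2$ carries a genuinely non-Gaussian contribution from the fourth-order cumulant of $\{(X_t,Y_t)\}$, and it is exactly condition (B6) that must be invoked to show this does not inflate the $(TB_T)^{-1}$ scale; pushing this through uniformly in $\omega$, and in particular justifying the interchange of expectation with the infinite eigen-sum that is amplified by the unbounded weights $(\lambda_j^\omega+\zeta_T)^{-1}$, is where the summability supplied jointly by (B1), (B4) and (B6) is essential. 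The second delicate point is to verify that the quadratic-and-higher remainder $\Xi_\omega$ contributes only $o(B_T^{-1}T^{-(2\beta-1)/(\alpha+2\beta)})$: on $G_T$ its operator norm is controlled by $(\delta_T/\zeta_T)^2\zeta_T^{-1}$, but it must still be measured in Hilbert--Schmidt norm after left-multiplication by $\mathscr{F}^{YX}_\omega$ and right-application of $S_\omega$, and it is precisely to make this remainder negligible that the expectation is truncated to $G_T$ rather than taken unconditionally.
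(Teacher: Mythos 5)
Your proposal is correct and follows essentially the same route as the paper's proof: the same Parseval reduction, the same bias/variance split around the population Tikhonov target (the paper's $\widetilde{\Q}_{\omega}=\F^{YX}_{\omega}\big[\F^{XX}_{\omega}+\zeta_T\mathscr{I}\big]^{-1}$), the same two key scalar sums $\zeta_T^2\sum_j j^{-2\beta}(\lambda_j^{\omega}+\zeta_T)^{-2}$ and $T^{-1}\sum_j \lambda_j^{\omega}(\lambda_j^{\omega}+\zeta_T)^{-2}$ (Lemma \ref{Slambdaj}), the same high-probability event $G_T$ on which $\hs\widehat{\F}^{XX}_{\omega,T}-\F^{XX}_{\omega}\hs_2\,\zeta_T^{-1}=o(1)$ so that the regularised inverse can be linearised (the paper uses the exact resolvent identity $\widehat{V}^{+}-V^{+}=V^{+}\Delta V^{+}\big[\mathscr{I}+\Delta V^{+}\big]^{-1}$ with the last factor bounded on $G_T$, rather than your truncated Neumann series, a cosmetic difference), and the same appeal to (B6) and the cumulant propositions for the non-Gaussian fourth-order contributions. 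The one caveat is that the step you assert in a single line --- that the smoothed cross-periodogram error satisfies $\mathbb{E}\big|(\Delta^{YX}_{\omega})_{ij}\big|^2\asymp(TB_T)^{-1}(\F^{YY}_{\omega})_{ii}\,\lambda_j^{\omega}$ --- is where the paper spends the bulk of its effort (the decomposition $\scrS_1=\scrS_{11}+\scrS_{12}+\scrS_{13}$, which requires splitting $\widetilde{Y}_{\omega,T}$ into $\F^{B}_{\omega}\widetilde{X}_{\omega,T}$, a noise DFT $K_{\omega,T}$, and the edge-effect term $L_{\omega,T}$, because filtering does not commute with the finite-sample DFT, each block then being controlled by the Taylor/cumulant machinery of the Appendix); since you explicitly flag this, together with the uniform-in-$\omega$ control of the eigen-sums, as the principal obstacle, this is unexecuted detail within the same strategy rather than a gap in the approach.
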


\begin{remark}
Note that assuming that $\frac{\alpha -1}{\alpha+2\beta} < \gamma <  \frac{2\beta -\alpha}{\alpha + 2\beta}$ is compatible with assumption (B1) since $\alpha < \beta + 1/2$.
\end{remark}

If we compare the rate \ref{rate} with the one obtained by \citet{HH07} in the i.i.d case, we see that they are identical except for the presence of the $B_T^{-1}$ factor in our case. Intuitively, this is the price we have to pay for the fact that the estimation of the spectral density operator is intrinsically harder than the estimation of a covariance operator: for densely observed functional data, a covariance operator can be estimated at a parametric rate  (\citet{hall2006properties}), but the spectral density operator can only be estimated at nonparametric rates (\citet{panaretos2013fourier}).

\medskip
The proof of Theorem \ref{main_theorem} is quite technical, and will be constructed via a series of intermediate results in the next Section.

\section{Proof of Theorem \ref{main_theorem}} \label{proof_section}

In the interest of tidiness, we introduce some additional notational conventions here that will be made frequent use of in the forthcoming lemmas and propositions.
\begin{itemize}
\item For fixed $\omega \in [0,2\pi]$, define $u_s$ to be an element of $\big\{\nu_s + 2k \pi: k \in \mathbb{Z} \big\}$ such that $\big|\omega - u_s \big| \leq \pi$. By this definition $u_s$ is well-defined and

$$f_{\nu_s}^{XX} = f_{u_s}^{XX}; \qquad W^{(T)} \big(\omega - \nu_s \big)  = W^{(T)}\big(\omega - u_s \big),$$

since $f_{\omega}^{XX}$ and $W^{(T)}$ are periodic with period $2\pi$.
\item $h_T(t) = 1_{[0,T-1]}$.
\end{itemize}
Note that $B_T = T^{-\gamma}$ and $\gamma > (\alpha -1)/(\alpha + 2\beta)$, thus
\begin{align*}
T^{-1} B_T  \zeta_T^{-2} =  T^{2\alpha/(\alpha + 2\beta)} T^{-\gamma} T^{-1}  = T^{-(2\beta -\alpha)/(\alpha + 2\beta)} T^{-\gamma} 
=  T^{-(2\beta -1)/(\alpha + 2\beta)} T^{(\alpha-1)/(\alpha + 2\beta)} T^{-\gamma} \\
= O\Big( T^{-(2\beta -1)/(\alpha + 2\beta)} \Big) .
\end{align*}

We first recall that 
\begin{align*}
\mathbb{E} \left\{ \sum_t \hs \B_t - \widehat{\B}_t \hs_2^2 \right\} = \mathbb{E} \left\{\int_0^{2\pi} \hs \Q_{\omega} - \widehat{\Q}_{\omega}\hs_2^2 \right\} d\omega = \int_{0}^{2\pi} \mathbb{E} \hs \Q_{\omega} - \widehat{\Q}_{\omega}\hs_2^2 d\omega.
\end{align*}
Hence, we first need to find the Hilbert-Schmidt norm by applying part C of Lemma \ref{SCoij} and then take the integral over $\omega$. Let 
\begin{align*}
\widetilde{\mathscr{Q}}_{\omega} &: = \F^{YX}_{\omega} \big(\F^{XX}_{\omega} + \zeta_T \mathscr{I}\big)^{-1} .
\end{align*}
Using the triangle inequality,
\begin{align*}
\hs \Q_{\omega} - \widehat{\Q}_{\omega,T} \hs_2^2 \leq 2 \hs\widetilde{\Q}_{\omega} - \Q_{\omega} \hs_2^2 + 2 \hs \widetilde{\Q}_{\omega} - \widehat{\Q}_{\omega,T}\hs_2^2.
\end{align*}
By definition,
\begin{align*}
\Q_{\omega} &= \Big\{\sum_{i,j} a_{ij}^{\omega} \varphi_i^{\omega} \otimes \overline{\varphi_j^{\omega}}\Big\}  \Big\{\sum_j \frac{1}{\lambda_j^{\omega}} \varphi_j^{\omega} \otimes  \overline{\varphi_j^{\omega}} \Big\}  = \sum_j  \sum_{i}\frac{a_{ij}^{\omega}}{\lambda_j^{\omega}} \varphi_i^{\omega}\otimes \overline{\varphi_j^{\omega}}  = \sum_j  \sum_{i}b_{ij}^{\omega} \varphi_i^{\omega}\otimes \overline{\varphi_j^{\omega}}\\
\widetilde{\Q}_{\omega} &= \Big\{\sum_{i,j} a_{ij}^{\omega} \varphi_i^{\omega} \otimes \overline{\varphi_j^{\omega}}\Big\} \Big\{\sum_j \frac{1}{\lambda_j^{\omega} + \zeta_T} \varphi_j^{\omega} \otimes \overline{\varphi_j^{\omega}} \Big\}  = \sum_j  \sum_{i}\frac{a_{ij}^{\omega}}{\lambda_j^{\omega} + \zeta_T} \varphi_i^{\omega} \otimes \overline{\varphi_j^{\omega} }.
\end{align*}
Thus,
\begin{align*}
\widetilde{\Q}_{\omega} - \Q_{\omega} &= \sum_{i,j} \frac{a_{ij}^{\omega} \zeta_T}{\lambda_j^{\omega} (\lambda_j^{\omega} + \zeta_T)} \varphi_i^{\omega} \otimes\overline{\varphi_j^{\omega}}\\
\hs\widetilde{\Q}_{\omega} - \Q_{\omega} \hs_2^2  &= \sum_{i,j} \frac{\big|a_{ij}^{\omega}\big|^2}{(\lambda_j^{\omega})^2}\times  \frac{\zeta_T^2}{(\lambda_j^{\omega} + \zeta_T)^2} = \sum_{j} \left\{\sum_i \big|b_{ij}^{\omega}\big|^2\right\} \times \frac{\zeta_T^2}{(\lambda_j^{\omega} + \zeta_T)^2}\leq O(1) \times \sum_j j^{-2\beta} \frac{\zeta_T^2}{(\lambda_j^{\omega} + \zeta_T)^2}\\
&=  O\Big(T^{-(2\beta -1)/(\alpha + 2\beta)} \Big).
\end{align*}
The last inequality comes from \eqref{seqA} and assumption (B1). We next decompose  $\widetilde{\Q}_{\omega} - \widehat{\Q}_{\omega,T} $ as
\begin{align*}
 \widetilde{\Q}_{\omega} - \widehat{\Q}_{\omega,T} &= \F_{\omega}^{YX} \big[\F_{\omega}^{XX} + \zeta_T \mathscr{I}\big]^{-1} - \widehat{\F}_{\omega,T}^{YX} \Big[\widehat{\F}^{XX}_{\omega,T} + \zeta_T \mathscr{I} \Big]^{-1}\\
& = \Big(\F_{\omega}^{YX} - \widehat{\F}_{\omega,T}^{YX}\Big) \Big[\F^{XX}_{\omega} + \zeta_T \mathscr{I} \Big]^{-1} + \Big( \widehat{\F}_{\omega,T}^{YX}- \F_{\omega}^{YX} \Big) \Big(\big[\F_{\omega}^{XX} + \zeta_T \mathscr{I}\big]^{-1} - \Big[\widehat{\F}^{XX}_{\omega,T} +   \zeta_T \mathscr{I} \Big]^{-1} \Big)  +\\
& \qquad \F_{\omega}^{YX}  \Big(\big[\F_{\omega}^{XX} + \zeta_T \mathscr{I}\big]^{-1} - \Big[\widehat{\F}^{XX}_{\omega,T} + \zeta_T \mathscr{I} \Big]^{-1} \Big)\\
& = \scrS_1 + \scrS_2 + \scrS_3.
\end{align*}
The remainder of the proof will deal with constructing upper bounds for the three terms $\{\scrS_1, \scrS_2,  \scrS_3\}$. To this aim, the strategy will be to: 
\begin{enumerate}
\item Apply part C of Lemma \ref{SCoij} with the orthogonal basis $\big\{ \varphi_i^{\omega} \big\}$ in order to to reduce the problem to calculations involving integrals of kernel functions.
\item  Use  Propositions  \ref{4cumS1}  and \ref{CovDenX} to break these integrals down into manageable terms.
\item Apply Lemma \ref{VPI} to determine the required upper bound for each of these terms. 
\end{enumerate}
We organise this process into separate subsections for each of the terms $\{\scrS_1, \scrS_2,  \scrS_3\}$, starting with $ \scrS_3$, then moving on to $ \scrS_1$ and finally $ \scrS_2$.
\subsection*{Bounding $\scrS_3$}
Let $\Delta = \widehat{\F}^{XX}_{\omega,T}  - \F_{\omega}^{XX}$. For simplicity,  also write
\begin{align*}
V =  \F_{\omega}^{XX}; \qquad  \widehat{V} = \widehat{\F}^{XX}_{\omega,T}; \qquad V^+ = \big[\F_{\omega}^{XX} + \zeta_T \mathscr{I}\big]^{-1} ; \qquad \widehat{V}^+ =  \Big[\widehat{\F}^{XX}_{\omega,T} + \zeta_T \mathscr{I} \Big]^{-1} .
\end{align*}
Using the identities
\begin{align*}   
&\widehat{V}^+ - V^+ = \widehat{V}^+ \Delta V^+  \Rightarrow  \widehat{V}^+ \big[ I + \Delta V^+\big]  = V^+ \Rightarrow \widehat{V}^+ = V^+ \big[I + \Delta V^+\big]^{-1},
\end{align*}
we obtain 
\begin{align*}
\widehat{V}^+ - V^+= V^+\Delta \widehat{V}^+  = V^+\Delta  V^+ \big[I + \Delta V^+\big]^{-1}.
\end{align*}
Thus, $\scrS_3 = \F_{\omega}^{YX} V^+ \Delta V^+ \big[I + \Delta V^+ \big]^{-1}$.
By Proposition \ref{FhatXX}, $\mathbb{E} \hs \Delta \hs_2 = O\big(B_T^{-1/2}T^{-1/2}\big)$ uniformly over $\omega$, and $\big\|V^+\big\| \leq1/\zeta_T$. Hence, on the event $G_T$ from Proposition \ref{RidOp}
\begin{align*}
\hs\Delta \zeta_T^{-1}\hs_2^2 \lesssim  T^{2\alpha/(\alpha + 2\beta)} T^{-1} T^{\gamma} = T^{-(2\beta - \alpha)/(\alpha + 2\beta)} T^{\gamma} = o(1),
\end{align*}
since $\gamma < (2\beta-\alpha)/(\alpha + 2\beta)$. Thus $\hs\big(I + \Delta V^+\big)^{-1} \hs_2$ is uniformly bounded on the even $G_T$. 
Hence, on $G_T$,
\begin{align*}
\hs \scrS_3 \hs_2^2 \leq O(1) \hs \F_{\omega}^{YX} V^+ \Delta V^+\hs_2^2  \leq O(1) \hs \F_{\omega}^{YX} V^+\hs_2^2 \times \hs \Delta V^+\hs_2^2. 
\end{align*}
The first factor of the right hand side is bounded by 
\begin{align*}
\sum_{i,j} \frac{\big|a_{ij}^{\omega}\big|^2}{(\lambda_j^{\omega} + \zeta_T)^2} = \sum_j \Big\{\sum_i \frac{\big|a_{ij}^{\omega}\big|^2}{(\lambda_j^{\omega} + \zeta_T)^2} \Big\} \leq O(1) \times  \sum_j j^{-2\beta} = O(1).
\end{align*}
Upon expanding  $\Delta=  \sum_{ij} \Delta_{ij} \varphi_i^{\omega} \otimes \overline{\varphi_j^{\omega}}$, we obtain 
\begin{align*}
\Delta \big[\F_{\omega}^{XX} + \zeta_T \mathscr{I}\big]^{-1} = \Big(\sum_{ij} \Delta_{ij} \varphi_i^{\omega} \otimes \overline{\varphi_j^{\omega}} \Big) \Big(\sum_j\frac{1}{\lambda_j^{\omega} + \zeta_T} \varphi_j^{\omega} \otimes \overline{\varphi_j^{\omega}} \Big) = \sum_{i,j} \frac{\Delta_{ij}}{\lambda_j^{\omega} + \zeta_T} \varphi_i^{\omega} \otimes \overline{\varphi_j^{\omega}}.
\end{align*}
It follows that
\begin{align}
\mathbb{E}\hs \Delta \big[\F_{\omega}^{XX} + \zeta_T \mathscr{I}\big]^{-1}\hs_2^2 = \sum_{i,j} \frac{\mathbb{E}\big|\Delta_{ij}\big|^2}{(\lambda_j^{\omega} + \zeta_T)^2} \label{ExpDel}.
\end{align}
Letting $\kappa$ be the integral kernel of $\Delta$, another way to express $\big|\Delta_{ij}\big|^2$ is via Lemma \ref{SCoij}, yielding
\begin{align*}
\mathbb{E}\big| \Delta_{ij}\big|^2 = \int_{[0,1]^4} \mathbb{E} \Big[\kappa(\tau_1,\sigma_1) \overline{\kappa(\tau_2,\sigma_2)}\Big] \times \overline{\varphi_i^{\omega}(\tau_1)} \varphi_j^{\omega}(\sigma_1) \varphi_i^{\omega}(\tau_2) \overline{\varphi_j^{\omega}(\sigma_2)} d\tau_1 d\sigma_1 d\tau_2 d\sigma_2.
\end{align*}
By proposition \ref{CovDenX},  $\mathbb{E} \Big[\kappa(\tau_1,\sigma_1) \overline{\kappa(\tau_2,\sigma_2)}\Big]$ can be decomposed as 
\begin{align*}
O\big(T^{-1} B_T^{-1} \big) \times \Big\{  f_{\omega}^{XX}(\tau_1,\tau_2) f_{-\omega}^{XX}(\sigma_1,\sigma_2)  + 1_{I_T}(\omega) f_{\omega}^{XX}(\tau_1,\sigma_2) f_{-\omega}^{XX}(\tau_2,\sigma_1)  \Big\}+ \\
 O\big(T^{-1}  \big) \times 1_{I_T}(\omega) \Big\{f_{\omega}^{XX}(\tau_1,\sigma_2) f_{-\omega}^{XX,(1)}(\sigma_1,\tau_2) + f_{-\omega}^{XX}(\sigma_1,\tau_2) f_{\omega}^{XX,(1)}(\tau_1,\sigma_2)  \Big\} + \\
\qquad  O\big(T^{-1} B_T \big) \times \big\{ \vartheta_1(\tau_1,\tau_2) \odot \vartheta_2(\sigma_1,\sigma_2) +1_{I_T}(\omega) \vartheta_3(\tau_1,\sigma_2) \odot \vartheta_4(\sigma_1,\sigma_2\big\} +\\
\qquad \frac{1}{T^2} \sum_{r,s=0}^{T-1} W^{(T)}(\omega -\nu_s) W^{(T)}(\omega - \nu_r ) p_{r,s}^{(T)}(\tau_1,\sigma_1,\tau_2,\sigma_2).
\end{align*}
We now bound each term of $\mathbb{E} |\Delta_{ij}|^2$, before summing over $i$ and $j$ as in \eqref{ExpDel}, and then taking the integral over $\omega$.

\subsubsection*{Bounding the term $O\big(T^{-1} B_T^{-1}\big)f_{\omega}^{XX}(\tau_1,\tau_2)  f_{-\omega}^{XX}(\sigma_1,\sigma_2)$.} \label{fpmomega1}
Write
\begin{align*}
&\int_{[0,1]^4} f_{\omega}^{XX}(\tau_1,\tau_2) \overline{f_{\omega}^{XX}(\sigma_1,\sigma_2)} \times  \overline{\varphi_i^{\omega}(\tau_1)}\varphi_j^{\omega}(\sigma_1) \varphi_i^{\omega}(\tau_2)\overline{\varphi_j^{\omega}(\sigma_2)}  d\tau_1 d\sigma_1 d\tau_2 d\sigma_2  \\
&\qquad \qquad = \int_{[0,1]^2} f_{\omega}^{XX}(\tau_1,\tau_2) \overline{\varphi_i^{\omega}(\tau_1)} \varphi_i^{\omega}(\tau_2) d\tau_1 d\tau_2   \times \int_{[0,1]^2}  \overline{f_{\omega}^{XX}(\sigma_1,\sigma_2)} \varphi_j^{\omega}(\sigma_1) \overline{\varphi_j^{\omega}(\sigma_2)} d\sigma_1 d\sigma_2  \\
& \qquad \qquad = \lambda_i^{\omega} \lambda_j^{\omega}.
\end{align*}
Taking the sum over $i$ and $j$ as in \eqref{ExpDel}, by Lemma \ref{Slambdaj}, we obtain
\begin{align*}
O\big(T^{-1} B_T^{-1} \big) \sum_{i,j} \frac{\lambda_i^{\omega} \lambda_j^{\omega} }{(\lambda_j + \zeta_T)^2} = O\big(T^{-1} B_T^{-1} \big) \sum_i \lambda_i^{\omega} \sum_j \frac{\lambda_j^{\omega}}{(\lambda_j^{\omega} + \zeta_T)^2} = O\big(B_T^{-1} \big) \times O\Big(T^{-(2\beta -1)/(\alpha + 2\beta)} \Big).
\end{align*}
\subsubsection*{Bounding the term  $O\big(T^{-1} B_T^{-1} \big) 1_{I_T}(\omega) f_{\omega}^{XX}(\tau_1,\sigma_2)  f_{-\omega}^{XX}(\tau_2,\sigma_1)$.} \label{fpmomega2}
Write
\begin{align*}
&\int_{[0,1]^4} f_{\omega}^{XX}(\tau_1,\sigma_2) \overline{f_{\omega}^{XX}(\tau_2,\sigma_1)} \times  \overline{\varphi_i^{\omega}(\tau_1)}\varphi_j^{\omega}(\sigma_1) \varphi_i^{\omega}(\tau_2)\overline{\varphi_j^{\omega}(\sigma_2)}  d\tau_1 d\sigma_1 d\tau_2 d\sigma_2  \\
&\qquad \qquad = \int_{[0,1]^2} f_{\omega}^{XX}(\tau_1,\sigma_2) \overline{\varphi_i^{\omega}(\tau_1)} \overline{\varphi_j^{\omega}(\sigma_2)} d\tau_1 d\sigma_2  \times \int_{[0,1]^2} \overline{f_{\omega}^{XX}(\sigma_1,\tau_2)}\varphi_j^{\omega}(\sigma_1)  \varphi_i^{\omega}(\tau_2) d\sigma_1 d\tau_2 \\
& \qquad \qquad = \lambda_i^{\omega} \int_{[0,1]} \overline{\varphi_i^{\omega}(\sigma_2)} \overline{\varphi_j^{\omega}(\sigma_2)} d\sigma_2 \times \lambda_j^{\omega} \int_{[0,1]} \varphi_j^{\omega}(\tau_2) \varphi_i^{\omega}(\tau_2) d\tau_2.
\end{align*}
This is dominated by $\lambda_i^{\omega} \lambda_j^{\omega}$ (the same argument as in the previous part has been applied here).

\subsubsection*{Bounding the term $O\big( T^{-1}\big) \times 1_{I_T}(\omega) f_{\omega}^{XX}(\tau_1,\sigma_2) f_{-\omega}^{XX,(1)}(\sigma_1,\tau_2)  $.}
Note that $\hs \F_{\omega}^{XX}\hs_1$ and $\hs \F_{\omega}^{XX,(1)} \hs_1$ are uniformly bounded. Applying Lemma  \ref{VPI}, we obtain the bound
\begin{align*}
O(T^{-1}) 1_{I_T}(\omega) \zeta_T^{-2}.
\end{align*}  
Note that the length of $I_T$ is of order $B_T$. Taking the integral over $\omega$, then we get 
$O\Big(T^{-1} \zeta_T^{-2}B_T\Big)  = O\Big(T^{-(2\beta -1)/(\alpha + 2\beta)} \Big)$.

\subsubsection*{Bounding the term $O\big( T^{-1}\big) 1_{I_T}(\omega) \times f_{\omega}^{XX,(1)}(\tau_1,\sigma_2) f_{-\omega}^{XX}(\sigma_1,\tau_2) $.}
For this term, we apply Lemma \ref{VPI} as in the previous paragraph.

\subsubsection*{Bounding the term $O\big(T^{-1} B_T \big) \vartheta_1(\tau_1,\tau_2) \odot \vartheta_2(\sigma_1,\sigma_2)$.} \label{vartheta1}
Applying Lemma \ref{VPI}, we obtain
$ O\big(T^{-1} B_T \big) \times \zeta_T^{-2} 
=  O\big(T^{-(2\beta-1)/(\alpha + 2\beta)} \big)$.

\subsubsection*{Bounding the term $O(T^{-1}B_T) \vartheta_3(\tau_1,\sigma_2) \odot \vartheta_4(\sigma_1,\tau_2)$.} \label{vartheta2}
Applying Lemma \ref{VPI} we obtain $O\big(T^{-1} B_T \zeta_T^{-2} \big) =  O\big(T^{-(2\beta-1)/(\alpha + 2\beta)} \big)$. 

\subsubsection*{Bounding the term $\frac{1}{T^2} \sum_{r,s=0}^{T-1} W^{(T)}(\omega -\nu_s) W^{(T)}(\omega - \nu_r ) p_{r,s}^{(T)}(\tau_1,\sigma_1,\tau_2,\sigma_2)$.}
 Applying Lemma \ref{VPI} part (C) and Proposition \ref{4cumS1} part (C), we have 
$$T^{-1} \zeta_T^{-2} = \frac{1}{B_T} O\Big( T^{-(2\beta -1)/(\alpha + 2\beta)} \Big) . $$
In summary, we have upper bounded $\scrS_3$ as required. 

\subsection*{Bounding $\scrS_1$}
Recall that
\begin{align*}
\widehat{\F}^{YX}_{\omega,T} & = \frac{1}{T}\sum_{s=0}^{T-1} W^{(T)}(\omega - \nu_s) \mathscr{P}^{YX}_{\nu_s,T} ; \qquad
\scrS_1= \Big( \F_{\omega}^{YX} - \widehat{\F}_{\omega,T}^{YX} \Big) \big[\F_{\omega}^{XX} + \zeta_T \mathscr{I}\big]^{-1} .
\end{align*}
Now write 
\begin{align*}
R_t &:= \sum_s\B_{t-s}  X_s\\
H_{\omega,T}  & := \frac{1}{\sqrt{T}} \left\{\sum_t h_T(t) R_t  \exp\big\{-\mathbf{i} \omega t \big\}\right\} \\
L_{\omega,T} &: =  H_{\omega,T} - \F^B_{\omega}  \widetilde{X}_{\omega,T} \\
K_{\omega,T} &: =  \frac{1}{\sqrt{T}} \left\{\sum_t h_T(t)\epsilon_t \exp\big\{-\mathbf{i} \omega t \big\}\right\}.
\end{align*}
In this notation,
\begin{align*}
 \F^B_{\omega}  \widetilde{X}_{\omega,T} & = \left\{\sum_t \B_{t} \exp \big\{-\mathbf{i} \omega t \big\} \right\}  \frac{1}{\sqrt{T}} \left\{ \sum_s X_s h_T(s) \exp \big\{-\mathbf{i} \omega s \big\}\right\} \\
& = \frac{1}{\sqrt{T}}\sum_{t,s} \B_t  X_s h_T(s) \exp\big\{-\mathbf{i} \omega (t+s) \big\} \\
H_{\omega,T} & = \frac{1}{\sqrt{T}} \sum_{u,v} h_T(u+v) \B_u X_v \exp\big\{-\mathbf{i} \omega (u+v) \big\}\\
L_{\omega,T}& = \frac{1}{\sqrt{T}}\sum_{u,v} \big\{ h_T(v) - h_T(u+v) \big\} \B_u X_v \exp\big\{-\mathbf{i} \omega (u+v) \big\} 
\end{align*}
The operator $\mathscr{P}_{\omega,T}^{YX}  = \widetilde{Y}_{\omega,T} \otimes \overline{\Big(\widetilde{X}_{\omega,T} \Big)}$ can be decomposed as
\begin{align*}
\mathscr{P}_{\omega,T}^{YX}  &= \frac{1}{\sqrt{T}} \left\{\sum_t h_T(t) \big[R_t + \epsilon_t\big] \exp\big\{-\mathbf{i} \omega t \big\}\right\} \otimes \widetilde{X}_{-\omega,T} \\
&= \frac{1}{\sqrt{T}} \left\{\sum_t h_T(t) R_t  \exp\big\{-\mathbf{i} \omega t \big\}\right\} \otimes \widetilde{X}_{-\omega,T}  + \frac{1}{\sqrt{T}} \left\{\sum_t h_T(t)\epsilon_t \exp\big\{-\mathbf{i} \omega t \big\}\right\} \otimes \widetilde{X}_{-\omega,T}\\
&=  H_{\omega,T} \otimes \widetilde{X}_{-\omega,T} + K_{\omega,T} \otimes \widetilde{X}_{-\omega,T},\\
& = \F_{\omega}^B \widetilde{X}_{\omega,T} \otimes \widetilde{X}_{-\omega,T}  +  L_{\omega,T} \otimes \widetilde{X}_{-\omega,T} + K_{\omega,T} \otimes \widetilde{X}_{-\omega,T}.
\end{align*}
Hence,
\begin{align*}
\widehat{\F}_{\omega,T}^{YX} & = \frac{1}{T} \sum_{s=0}^{T-1} W^{(T)}(\omega -\nu_s) \Big[ \F_{\nu_s}^B   \widetilde{X}_{\nu_s,T} \otimes \widetilde{X}_{-\nu_s,T}  +  L_{\nu_s,T} \otimes \widetilde{X}_{-\nu_s,T}  + K_{\nu_s,T} \otimes \widetilde{X}_{-\nu_s,T}\Big]  \\
& = \mathscr{D}_1 + \mathscr{D}_2 + \mathscr{D}_3.
\end{align*}
We can now decompose $\scrS_1$ based on  the $\mathscr{D}_i$,
\begin{align*}
\scrS_1 &= \Big( \F_{\omega}^{YX} - \widehat{\F}_{\omega,T}^{YX} \Big) \big[\F_{\omega}^{XX} + \zeta_T \mathscr{I}\big]^{-1}   \\
&= \Big( \F_{\omega}^{YX} - \mathscr{D}_1\Big) \big[\F_{\omega}^{XX} + \zeta_T \mathscr{I}\big]^{-1} +  \mathscr{D}_2  \big[\F_{\omega}^{XX} + \zeta_T \mathscr{I}\big]^{-1}   + \mathscr{D}_3  \big[\F_{\omega}^{XX} + \zeta_T \mathscr{I}\big]^{-1}    \\
&= \scrS_{11} + \scrS_{12} + \scrS_{13}.
\end{align*}
By the Cauchy-Schwarz inequality
\begin{align*}
\hs \scrS_1 \hs_2^2  \leq 3 \hs \scrS_{11} \hs_2^2 + 3 \hs \scrS_{12} \hs_2^2 +  3\hs \scrS_{13} \hs_2^2.
\end{align*}
We focus on each of the three terms in the following paragraphs.
\subsubsection*{Bounding $\scrS_{13}$.}
 Let $D_3$ be the integral kernel of $\mathscr{D}_3$. Then, similar to \eqref{ExpDel}, 
\begin{align}
\mathbb{E} \hs \scrS_{13} \hs_2^2 = \sum_{i,j} \frac{1}{(\lambda_j^{\omega} + \zeta_T)^2} \int_{[0,1]^2} \mathbb{E} \Big[D_3(\tau_1,\sigma_1) \overline{D_3(\tau_2,\sigma_2)} \Big]\overline{\varphi_i^{\omega}(\tau_1)}\varphi_j^{\omega}(\sigma_1) \varphi_i^{\omega}(\tau_2) \overline{\varphi_j^{\omega}(\sigma_2)}  d\tau_1 d\sigma_1 d\tau_2 d\sigma_2. \label{S13}
\end{align}
We need to work on the expectation inside the integral. First,
\begin{align*}
\overline{D_3(\tau_2,\sigma_2)} =  \sum_{r=0}^{T-1} W^{(T)}(\omega - \nu_r) \overline{ K_{\nu_r,T}(\tau_2) \widetilde{X}_{-\nu_r,T}(\sigma_2) } =  \sum_{r=0}^{T-1} W^{(T)}(\omega - \nu_r)  K_{-\nu_r,T}(\tau_2) \widetilde{X}_{\nu_r,T}(\sigma_2) 
\end{align*}
By independence of $X$ and $\epsilon$,
\begin{align*}
\mathbb{E}\big( K_{\nu_s,T}(\tau_1)  \widetilde{X}_{-\nu_s,T}(\sigma_1)  K_{-\nu_r,T}(\tau_2) \widetilde{X}_{\nu_r,T}(\sigma_2)\big) = \mathbb{E} \big[K_{\nu_s,T}(\tau_1) K_{-\nu_r,T}(\tau_2)\big] \times \mathbb{E} \big[\widetilde{X}_{-\nu_s,T}(\sigma_1)\widetilde{X}_{\nu_r,T}(\sigma_2)\big] .
\end{align*}
Let  $q_{\omega}$ be the spectral density function of $\big\{ \epsilon_t \big\}$. By independence of the $\{\epsilon_t\}$, we have $q_{\omega} = q_0$. Apply Proposition \ref{2.6} to the sequence $\{\epsilon_t\}$ to obtain
\begin{align}
&\mathbb{E} \Big[D_3(\tau_1,\sigma_1) \times \overline{D_3(\tau_2,\sigma_2)} \Big]  = \nonumber \\
& \frac{1}{T^2} \sum_{s,r=0}^{T-1} W^{(T)}(\omega -\nu_s) W^{(T)}(\omega  -\nu_r)   \mathbb{E}\big[K_{\nu_s,T}(\tau_1) K_{-\nu_r,T}(\tau_2) \big]  \times  \mathbb{E}\big[\widetilde{X}_{\nu_r,T} (\sigma_2)\widetilde{X}_{-\nu_s,T}(\sigma_1) \big]  \nonumber \\
&=  \frac{1}{T^2} \sum_{s,r=0}^{T-1} W^{(T)}(\omega -\nu_s) W^{(T)}(\omega -\nu_r)  \Big\{\delta_{rs} q_{\nu_s}(\tau_1,\tau_2) + \frac{1}{T}\vartheta_{1,\nu_r,\nu_s}(\tau_1,\tau_2) \Big\} \times \nonumber \\
&\qquad \qquad \Big\{\delta_{rs}f_{\nu_s}^{XX}(\sigma_1,\sigma_2) + \frac{1}{T} \vartheta_{2,\nu_r,\nu_s}(\sigma_1,\sigma_2)  \Big\}  \nonumber \\
&= \frac{1}{T^2} \sum_{s=0}^{T-1} W^{(T)}(\omega - \nu_s) W^{(T)}(\omega - \nu_s) q_{\nu_s}(\tau_1,\tau_2) f_{\nu_s}^{XX} (\sigma_1,\sigma_2)   + \nonumber \\
& \frac{1}{T^3} \sum_{s=0}^{T-1} W^{(T)}(\omega - \nu_s) W^{(T)}(\omega - \nu_s) \times q_{\nu_s}(\tau_1,\tau_2) \vartheta_{2,\nu_r,\nu_s}(\sigma_1,\sigma_2) + \nonumber \\
&  \frac{1}{T^3} \sum_{s=0}^{T-1} W^{(T)}(\omega - \nu_s) W^{(T)}(\omega - \nu_s) \times \vartheta_{1,\nu_r,\nu_s}(\tau_1,\tau_2)  f_{\nu_s}^{XX} (\sigma_1,\sigma_2)  + \nonumber \\
&\frac{1}{T^4} \sum_{s=0}^{T-1} W^{(T)}(\omega - \nu_s) W^{(T)}(\omega - \nu_r) \times \vartheta_{1,\nu_r,\nu_s}(\tau_1,\tau_2) \vartheta_{2,\nu_r,\nu_s}(\sigma_1,\sigma_2) \nonumber \\
&=\frac{1}{T^2} \sum_{s=0}^{T-1} W^{(T)}(\omega - \nu_s) W^{(T)}(\omega - \nu_s) q_{\nu_s}(\tau_1,\tau_2) f_{\nu_s}^{XX} (\sigma_1,\sigma_2)  + \frac{1}{T^{2} B_T^2} \vartheta_{1,q}(\tau_1,\tau_2) \odot \vartheta_{2,q}(\sigma_1,\sigma_2)  \label{D3} .
\end{align}
Replacing $\nu_s$ by $u_s$, and using Taylor's expansion for $f_{u_s}$ as in Lemma  \ref{SchNo12}, we obtain
\begin{align*}
f_{u_s}^{XX}(\sigma_1,\sigma_2)  &= f_{\omega}^{XX}(\sigma_1,\sigma_2)  + \frac{1}{1!} (u_s - \omega) f^{XX,(1)}_{\omega}(\sigma_1,\sigma_2)   + \frac{1}{2!} (u_s -\omega)^2 g_{2,u_s,\omega}(\sigma_1,\sigma_2).
\end{align*}
Thus, by Lemma \ref{SumW}, the first term of right hand side of \eqref{D3} becomes 
\begin{align*}
\frac{1}{T^2} \sum_{s=0}^{T-1} W^{(T)}(\omega - u_s) W^{(T)}(\omega - u_s) q_{0}(\tau_1,\tau_2) f_{\omega}^{XX}(\sigma_1,\sigma_2)  + \\
\frac{1}{T^2} \sum_{s=0}^{T-1} W^{(T)}(\omega - u_s) W^{(T)}(\omega - u_s) \times (u_s -\omega) \times q_{0}(\tau_1,\tau_2) f^{XX,(1)}_{\omega}(\sigma_1,\sigma_2)    + \\
 \frac{1}{2T^2} \sum_{s=0}^{T-1} W^{(T)}(\omega - u_s) W^{(T)}(\omega - u_s)  \times (u_s -\omega)^2 q_0(\tau_1,\tau_2) g_{2,u_s,\omega}(\sigma_1,\sigma_2) \\
= O\big(T^{-1} B_T^{-1}  \big) \times q_0(\tau_1,\tau_2) f_{\omega}^{XX}(\sigma_1,\sigma_2)  + O\big(T^{-1} B_T\big)  \times \vartheta_{1,g}(\tau_1,\tau_2) \odot \vartheta_{2,g}(\sigma_1,\sigma_2).
\end{align*}
For each $i$ and $j$ we compute
\begin{align*}
&\int_{[0,1]^4} q_0(\tau_1,\tau_2) f_{\omega}^{XX}(\sigma_1,\sigma_2) \overline{\varphi_i^{\omega}(\tau_1)}\varphi_j^{\omega}(\sigma_1) \varphi_i^{\omega}(\tau_2) \overline{\varphi_j^{\omega}(\sigma_2)}  d\tau_1 d\sigma_1 d\tau_2 d\sigma_2 \\
&\qquad \qquad = \Big\langle q_0(\tau_1,\tau_2)  , \overline{\varphi_i^{\omega}(\tau_1)} \varphi_i^{\omega}(\tau_2)   \Big\rangle \times  \Big\langle  f_{\omega}^{XX}(\sigma_1,\sigma_2) ,   \varphi_j^{\omega}(\sigma_1) \overline{\varphi_j^{\omega}(\sigma_2)} \Big\rangle \\
&\qquad \qquad = q_{0,ii} \times \lambda_j.
\end{align*}
So, taking the sum over $i$ and $j$ as in \eqref{S13}, we deduce that 
\begin{align*}
O\big(T^{-1} B_T^{-1} \big) \sum_{i,j} \frac{q_{0,ii}^{\omega} \lambda_j}{(\lambda_j + \zeta_T)^2}  = O\big(T^{-1} B_T^{-1} \big) \Big\{ \sum_i q_{0,ii}\Big\} \times \Big\{\sum_j \frac{\lambda_j}{(\lambda_j + \zeta_T)^2}\Big\}  = B_T^{-1} O\Big(T^{-(2\beta -1)/(\alpha + 2\beta)} \Big).
\end{align*}
For $\vartheta_{1,q}(\tau_1,\tau_2) \odot \vartheta_{2,q}(\sigma_1,\sigma_2)$ and $\vartheta_{1,g}(\tau_1,\tau_2) \odot \vartheta_{2,g}(\sigma_1,\sigma_2)$, we now apply Lemma \ref{VPI}, and we obtain  $O\big(T^{-1} B_T \big) \zeta_T^{-2} = O\Big(T^{-(2\beta -1)/(\alpha + 2\beta)} \Big)$
\subsubsection*{Bounding $\scrS_{12}$} 
Let $D_2$ be the integral kernel of $\mathscr{D}_2$ and  expand $\mathscr{D}_2 = \sum_{i,j}D_{2,ij} \varphi_i^{\omega} \otimes \overline{\varphi_j^{\omega}}$. Similarly with \eqref{ExpDel}, 
\begin{align*}
&\mathbb{E} \hs \scrS_{12} \hs_2^2  
= \sum_{i,j} \frac{\mathbb{E} D_{2,ij}^2}{(\lambda_j^{\omega} + \zeta_T)^2}  \leq \frac{1}{\zeta_T^2} \sum_{i,j} \mathbb{E} D_{2,ij}^2\\
&D_2(\tau_1,\sigma_1) \overline{D_2(\tau_2,\sigma_2)} = \sum_{s,r=0}^{T-1} W^{(T)}(\omega - \nu_s) W^{(T)}(\omega -\nu_r) L_{\nu_s,T}(\tau_1) \widetilde{X}_{-\nu_s,T}(\sigma_1) L_{-\nu_r,T}(\tau_2) \widetilde{X}_{\nu_r,T}(\sigma_2).
\end{align*}
For simplicity, denote
 \begin{align*}
U_s =  L_{\nu_s,T}(\tau_1); \qquad V_{-s} = \widetilde{X}_{-\nu_s,T}(\sigma_1); \qquad U_{-r} = L_{-\nu_r,T}(\tau_2); \qquad V_{r} = \widetilde{X}_{\nu_r,T}(\sigma_2).
 \end{align*}
Using the fourth order cumulant equation, we have
\begin{align*}
\mathbb{E} \big( U_s V_{-s} U_{-r} V_{r} \big)  = \mathbb{E} \big( U_s U_{-r}\big)   \mathbb{E} \big( V_{-s} V_r\big) 
+ \mathbb{E} \big(U_s V_{-s}\big)   \mathbb{E} \big( U_{-r} V_r \big) + \mathbb{E} \big( U_s V_r\big)    \mathbb{E} \big( U_{-r}V_{-s}\big) \\
+  \quad {\cum}\big( U_s,V_r,U_{-s},V_{-r}\big).
\end{align*}
We first estimate $\mathbb{E} \big(U_s V_{-s}\big)   \mathbb{E} \big( U_{-r} V_r \big) + \mathbb{E} \big( U_s V_r\big)    \mathbb{E} \big( U_{-r}V_{-s}\big)$.
For each term,
\begin{align}
\mathbb{E} \Big(L_{\omega_1,T}\otimes \widetilde{X}_{\omega_2,T} \Big) &=  \frac{1}{T} \sum_{u,v,t} \big\{h_T(v) - h_T(u+v) \big\}\times h_T(t) \times  \B_u  \mathbb{E}\Big(X_v \otimes X_t\Big) \exp\big\{-\mathbf{i} (\omega_1 (u+v)  +\omega_2 t) \big\} \nonumber \\
&=\frac{1}{T} \sum_{u,v,t} \big\{h_T(v) - h_T(u+v) \big\}\times h_T(t) \times  \B_u \mathscr{R}_{t-v} \exp\big\{-\mathbf{i} (\omega_1 (u+v) + \omega_2t ) \big\} \label{uvt}
\end{align}
For each $u$ there are at most $2|u|$ different values of $v$ such that  $h_T(v) - h_T(u+v) = \pm 1$. With $t$ ranging from 0 to $T-1$, the multiplicity of the term $\B_u \mathscr{R}_w$ in \eqref{uvt} is no more than 2 $|u|$. Therefore, 
\begin{align*}
\hs \mathbb{E} \big(L_{\omega_1,T} \otimes  \widetilde{X}_{\omega_2,T} \big)\hs_1 &\leq \frac{1}{T} \sum_{u,w}2\big|u \big| \times \hs \B_u \mathscr{R}_w \hs_1 \leq \frac{2}{T} \sum_{u,w} \big|u \big| \times \hs \B_u\hs_2 \times \hs \mathscr{R}_w \hs_2   \\
&\leq \frac{2}{T}  \Big\{ \sum_u |u| \times \hs \B_u \hs_2 \Big\} \times \Big\{\sum_w \hs \mathscr{R}_w \hs_2 \Big\} = O(T^{-1}).
\end{align*}
And, consequently, 
\begin{align*}
\frac{1}{T^2}\sum_{s,r=0}^{T-1} W^{(T)}(\omega - \nu_s) W^{(T)}(\omega - \nu_r) \mathbb{E}(U_s V_r) \mathbb{E}(U_{-r} V_{-s}) = T^{-2} B_T^{-2} \vartheta_{1,uv}(\tau_1,\sigma_2)\odot  \vartheta_{2,uv}(\sigma_1,\tau_2).
\end{align*}
Applying Lemma \ref{VPI}, we obtain an upper bound of the same order.
Similarly, we have
\begin{align*}
\frac{1}{T^2}\sum_{s,r=0}^{T-1} W^{(T)}(\omega - \nu_s) W^{(T)}(\omega - \nu_r) \mathbb{E}(U_s V_{-s}) \mathbb{E}(U_{-r} V_{r}) = T^{-2} B_T^{-2}\vartheta_{3,uv}(\tau_1,\tau_2)\odot  \vartheta_{4,uv}(\sigma_1,\sigma_2).
\end{align*}
Again, Lemma \ref{VPI} yields an upper bound of the same order.

For $\mathbb{E}(U_s U_{-r}) \mathbb{E}(V_r V_{-s})$, we expand $\mathbb{E}\big(L_{\omega_1,T} \otimes  L_{\omega_2,T} \big)$
\begin{align*}
\mathbb{E}\big(L_{\omega_1,T} \otimes L_{\omega_2,T}\big) = \frac{1}{T}  \sum_{u_1,v_1,u_2,v_2} \big\{h_T(v_1) - h_T(u_1 + v_1) \big\}
 \times \big\{h_T(v_2) - h_T(u_2 + v_2) \big\} \\
 \times \exp\big\{-\mathbf{i} \big(\omega_1(u_1 +v_1) + \omega_2(u_2 + v_2) \big) \big\}  
\times\B_{u_1} \mathscr{R}_{v_1 -v_2} \B_{u_2}^{*}.
\end{align*}
For $u_1$ and $u_2$  fixed, there are at most $2|u_1|$ and $2|u_2|$ values of $v_1$ and $v_2$, respectively, such that $h_T(v_1) - h_T(u_1 + v_1) , h_T(v_2) - h_T(u_2 + v_2)  = \pm 1$. Therefore,
\begin{align*}
 \hs \mathbb{E}\big(L_{\omega_1,T} \otimes  L_{\omega_2,T} \big) \hs_1 &\leq \frac{4}{T}\sum_{u_1,u_2} \big|u_1 \big| \times \big| u_2 \big|  \times \sum_v \hs \B_{u_1} \mathscr{R}_{v} \B_{u_2}^{*}\hs_1 \\
&\leq \frac{4}{T} \Big\{\sum_{u} |u| \times \hs \B_u\hs_2 \Big\}^2 \times \Big\{\sum_v \hs \mathscr{R}_v \hs_2 \Big\}   .
\end{align*}
But Proposition \ref{2.6} implies that
\begin{align*}
 \mathbb{E} \big( \widetilde{X}_{-\nu_s}^{(T)}(\sigma_1) \widetilde{X}_{\nu_r}^{(T)}(\sigma_2)\big) = \begin{cases} O\big(T^{-1} \big) \vartheta_{\nu_r,\nu_s}(\sigma_1,\sigma_2) &\qquad {\rm if }   \qquad r\neq s  \\
f_{\nu_s}^{XX}(\sigma_1,\sigma_2) + O(T^{-1}) \vartheta_{\nu_s}(\sigma_1,\sigma_2)  & \qquad {\rm if }   \qquad r = s .\end{cases}
\end{align*}
Therefore,
\begin{align*}
\frac{1}{T^2}\sum_{s,r=0}^{T-1} W^{(T)}(\omega - \nu_s) W^{(T)}(\omega - \nu_r) \mathbb{E}(U_s U_{-r}) \mathbb{E}(V_r V_{-s}) \\
= \frac{1}{T^2} \sum_{s=0}^{T-1} W^{(T)}(\omega - \nu_s)^2 \big\{f_{\nu_s} (\sigma_1,\sigma_2) + O(T^{-1}) \vartheta_{\nu_s}(\sigma_1,\sigma_2)  \big\} \times  \mathbb{E}(U_s U_{-s})  + \\
\frac{1}{T^2}\sum_{s\neq r=0}^{T-1} W^{(T)}(\omega - \nu_s) W^{(T)}(\omega - \nu_r) O(T^{-1}) \vartheta_{\nu_r,\nu_s}(\sigma_1,\sigma_2) \mathbb{E}(U_s U_{-r}) \\
 = T^{-2} B_T^{-2} \vartheta_{5,uv}(\tau_1,\tau_2) \odot \vartheta_{6,uv}(\sigma_1,\sigma_2).
\end{align*}
Applying Lemma \ref{VPI},  we obtain the desired rate. The last term in $\scrS_{12}$ is the cumulant term
\begin{align*}
&cum\Big(L_{\nu_s}^{(T)}, \widetilde{X}_{-\nu_s}^{(T)}, L_{-\nu_r}^{(T)},\widetilde{X}_{\nu_r}^{(T)} \Big)  \\
&= \frac{1}{T^2}cum \Bigg( \sum_{u_1,v_1} \big\{h_T(v_1) - h_T(u_1 + v_1) \big\} \B_{u_1} X_{v_1} \exp\big\{-\mathbf{i} \nu_s (u_1 + v_1) \big\}  ,  \sum_{v_2} h_T(v_2) X_{v_2} \exp\big\{\mathbf{i}\nu_s v_2 \big\} ,\\
& \qquad \qquad \qquad \sum_{u_3,v_3} \big\{h_T(v_3) - h_T(u_3 + v_3) \big\} \B_{u_3} X_{v_3} \exp \big\{\mathbf{i} \nu_r (u_3 + v_3)) \big\} , \sum_{v_4} h_T(v_4)  X_{v_4} \exp\big\{-\mathbf{i} \nu_r v_4 \big\}\Bigg)
\end{align*}
For fixed $u_1$ and $u_3$, denote
\begin{align*}
L(u_1,u_3) : =   \sum_{v_1,v_2,v_3,v_4} \Big\{h_T(v_1)  - h_T(u_1 + v_1)\Big\} \times \Big\{h_T(v_3) - h_T(u_3 + v_3) \Big\} \times h_T(v_2) \times h_T(v_4) \times \\
 \exp \Big\{-\mathbf{i} \big[\nu_s(u_1 + v_1)  - \nu_s v_2 - \nu_r(u_3 + v_3) + \nu_r v_4   \big] \Big\}   \times 
 {\cum} \Big( \B_{u_1}X_{v_1}, X_{v_2}, \B_{u_3}X_{v_3},X_{v_4} \Big).
\end{align*}
Then,
\begin{align*}
cum\Big(L_{\nu_s}^{(T)}, \widetilde{X}_{-\nu_s}^{(T)}, L_{-\nu_r}^{(T)},\widetilde{X}_{\nu_r}^{(T)} \Big) =\frac{1}{T^2} \sum_{u_1,u_3\in \mathbb{Z}}  L(u_1,u_3).
\end{align*}
By the multilinearity of cumulants,
\begin{align*}
cum \Big( \big(\B_{u_1}X_{v_1}\big)(\tau_1), X_{v_2}(\tau_2), \big(\B_{u_3}X_{v_3}\big)(\sigma_1) ,X_{v_4}(\sigma_2)  \Big) = \\
cum \Big( \int_0^1 b_{u_1}(\tau_1,\varsigma_1)X_{v_1}(\varsigma_1) d\varsigma_1 , X_{v_2}(\tau_2), \int_0^1 b_{u_3}(\sigma_1,\varsigma_3)X_{v_3}(\varsigma_3) d\varsigma_3 ,X_{v_4}(\sigma_2)  \Big) =\\
\int_0^1 b_{u_1}(\tau_1,\varsigma_1) \int_0^1 b_{u_3}(\sigma_1,\varsigma_3) {\cum} \Big( X_{v_1}(\varsigma_1)  , X_{v_2}(\tau_2), X_{v_3}(\varsigma_3) d ,X_{v_4}(\sigma_2)  \Big)d\varsigma_1 d\varsigma_3.
\end{align*}
Denote the above term by  $B_{u_1}{\cum} \Big(X_{v_1},X_{v_2},X_{v_3},X_{v_4} \Big) B_{u_3}^*$.
Replace $v_i = t_i + v_4$ for $i=1,2,3,4$, then $t_4 = 0$. Now the exponential factor in $L(u_1,u_3)$ is
\begin{align*}
\exp \Big\{-\mathbf{i} \big[\nu_s(u_1 + v_1)  - \nu_s v_2 - \nu_r(u_3 + v_3) + \nu_s v_4   \big] \Big\} \\
= \exp\Big\{-\mathbf{i} \big[\nu_s u_1 - \nu_r u_3\big] \Big\} \times \exp \Big\{-\mathbf{i}\big[\nu_s t_1 - \nu_s t_2 - \nu_r t_3 \big] \Big\}
 \times \exp\Big\{ -\mathbf{i} v_4 \times 0\Big\}.
\end{align*}
For the $h_T$ factor, denote 
\begin{align*}
H_T(t_1,t_2,t_3,v_4) := \Big\{ h_T(t_1 + v_4) - h_T(u_1 + t_1+v_4) \Big\} \times \Big\{h_T(t_3 + v_4) - h_T(u_3 + t_3 + v_4) \Big\}  \times h_T(t_2 + v_4) \times h_T(v_4) .
\end{align*}
Then
\begin{align*}
L(u_1,u_3) =& \sum_{t_1,t_2,t_3,v_4} H_T(t_1,t_2,t_3,v_4) \times \exp\Big\{-\mathbf{i} \big(\nu_s u_1 - \nu_r u_3\big) \Big\}  \\
&  \times \exp \Big\{-\mathbf{i}\big[\nu_s t_1 - \nu_s t_2 - \nu_r t_3 \big] \Big\} \times B_{u_1}  {\cum} \Big(X_{t_1},X_{t_2},X_{t_3},X_0 \Big)B_{u_3}^{*}.
\end{align*}
Note that the number of $v_4$ such that $h_T(v_4) = 1$ is $T$.  Letting $\mathcal{L}(u_1,u_3)$ be the integral operator of $L(u_1,u_3)$, we have
 \begin{align*}
\hs \mathcal{L}(u_1,u_3) \hs_1 \leq T \times \sum_{t_1,t_2,t_3} \hs \B_{u_1} \hs_2 
 \hs cum (X_{t_1},X_{t_2},X_{t_3},X_0)\hs_2  \hs \B_{u_3}\hs_2.
 \end{align*}
By assumptions (B4) and (B6),
\begin{align*}
cum\Big(L_{\nu_s}^{(T)}, \widetilde{X}_{-\nu_s}^{(T)}, L_{-\nu_r}^{(T)},\widetilde{X}_{\nu_r}^{(T)} \Big) =  \frac{1}{T^2} \sum_{u_1,u_3} L(u_1,u_3) = O(T^{-1}).
\end{align*}
Similarly to previous steps, applying Lemma \ref{VPI} and Proposition \ref{4cumS1} yields
\begin{align*}
 O\Big(T^{-1}\zeta_T^{-2} \Big)  = \frac{1}{B_T}O\Big(T^{-(2\beta -1)/(\alpha + 2\beta)} \Big).
\end{align*}
\subsubsection*{Bounding $\scrS_{11}$}
The steps in this case are similar to those involved in bounding $\scrS_3$. Recall that
\begin{align*}
\mathscr{D}_1 - \F_{\omega}^{YX} = \frac{1}{T} \sum_{s=0}^{T-1} W^{(T)}(\omega -\nu_s) \F_{\nu_s}^B  \widetilde{X}^{(T)}_{\nu_s} \otimes \widetilde{X}^{(T)}_{-\nu_s}  -\F_{\omega}^{YX}  = \frac{1}{T} \sum_{s=0}^{T-1} W^{(T)}(\omega -\nu_s) \F_{\nu_s}^B  \mathscr{P}^{XX}_{\nu_s,T}   -\F_{\omega}^{B} \F_{\omega}^{XX}.
\end{align*}
 We first work with $\mathbb{E} \big[ (\mathscr{D}_1 -\F_{\omega}^{YX}) \otimes (\mathscr{D}_1 - \F_{\omega}^{YX})^{*}\big]$. We have 
\begin{align*}
\mathbb{E} \big[ (\mathscr{D}_1 -\F_{\omega}^{YX})\otimes (\mathscr{D}_1 - \F_{\omega}^{YX})^{*}\big] =  \mathbb{E} \big[ \mathscr{D}_1 \otimes \mathscr{D}_1^* \big]  - \mathbb{E} \mathscr{D}_1 \otimes  \mathbb{E} \mathscr{D}_1^* + (\mathbb{E} \mathscr{D}_1 - \F_{\omega}^{YX} ) \otimes  (\mathbb{E} \mathscr{D}_1 - \F_{\omega}^{YX} )^*.
\end{align*}
To determine $\mathbb{E} \mathscr{D}_1$, we use Proposition \ref{2.6}
\begin{align*}
\mathbb{E}\Big[ \frac{1}{T} \sum_{s=0}^{T-1} W^{(T)}(\omega -\nu_s) \F_{\nu_s}^B \mathscr{P}_{\nu_s,T}^{XX} \Big]=
  \frac{1}{T} \sum_{s=0}^{T-1} W^{(T)}(\omega -\nu_s) \F_{\nu_s}^B \mathbb{E}\Big[\mathscr{P}_{\nu_s,T}^{XX} \Big] \\
=  \frac{1}{T} \sum_{s=0}^{T-1} W^{(T)}(\omega -\nu_s) \F_{\nu_s}^B  \Big\{\F_{\nu_s}^{XX} + \frac{1}{T} \V_{1,\nu_s} \Big\}  = \frac{1}{T} \sum_{s=0}^{T-1} W^{(T)}(\omega -\nu_s) \F_{\nu_s}^B \F_{\nu_s}^{XX} + \frac{1}{TB_T} \V_{1,\nu},
\end{align*}
with $\hs \V_{1,\nu} \hs_1  = O(1)$ uniformly over $\nu \in [0,2\pi]$. Taylor expanding, we have
\begin{align*}
f_{u_s}^{XX} &= f_{\omega}^{XX} + \sum_{j=1}^{p-1} \frac{(u_s -\omega)^j}{j!}  f_{\omega}^{XX,(j)}  + (u_s -\omega)^p g_{p,u_s,\omega} \\
f_{u_s}^B  &= f_{\omega}^B +  \sum_{j=1}^{p-1} \frac{(u_s-\omega)^{j}}{j!} \times f_{\omega}^{B,(j)}  + (u_s -\omega)^p g_{p,u_s,\omega}^B.
\end{align*}
Thus,
\begin{align*}
\frac{1}{T} \sum_{s=0}^{T-1} W^{(T)}(\omega -\nu_s) \F_{\nu_s}^B  \F_{\nu_s}^{XX} = \frac{1}{T} \sum_{s=0}^{T-1}  W^{(T)}(\omega -\nu_s) \Big\{ \sum_{j=0}^{p-1} \sum_{k=0}^{p-1} \F_{\omega}^{B,(k)} \F_{\omega}^{XX,(j)} \times (u_s -\omega)^{k+j}   + \V_{u_s,\omega} \times (u_s-\omega)^p \Big\}   .
\end{align*}
Note that $\hs \F_{\omega}^{B,(k)} \F_{\omega}^{XX,(j)} \hs_1 < \infty$. Using the same idea as in Proposition \ref{3.1}, we have
\begin{align*}
\frac{1}{T} \sum_{s=0}^{T-1} W^{(T)}(\omega -\nu_s) \F_{\nu_s}^B \F_{\nu_s}^{XX} = \F_{\omega}^B \F_{\omega}^{XX}  + \frac{1}{B_T T} \V_{\omega},
\end{align*}
with $\hs \V_{\omega} \hs_1  = O(1)$ uniformly over $\omega \in [0,2\pi]$. It follows that $\mathbb{E}\mathscr{D}_1 - \F_{\omega}^{B} \F_{\omega}^{XX}  = \frac{1}{TB_T} \V_{\omega}$.
Then, $\mathbb{E} \mathscr{D}_1 - \F_{\omega}^{YX} ) \otimes  (\mathbb{E} \mathscr{D}_1 - \F_{\omega}^{YX} )^*  = \frac{1}{T^2B_T^2} \V_{\omega} \otimes \V^*_{\omega}$. Finally, we apply Lemma $\ref{VPI}$,  which takes care of the term $\mathbb{E} \mathscr{D}_1-\F_{\omega}^{YX} $
Now we need to bound $\mathbb{E} \Big[D_1(\varsigma_1,\sigma_1)  \overline{D_1(\varsigma_2,\sigma_2)}\Big]$. This equals 
\begin{align}
&\frac{1}{T^2}\mathbb{E} \left[\sum_{s,r=0}^{T-1} W^{(T)}(\omega - \nu_s) W^{(T)}(\omega -\nu_r) \int_{[0,1]}f_{\nu_s}^B(\varsigma_1,\tau_1) p_{\nu_s}^{(T)}(\tau_1,\sigma_1) d\tau_1  \times \int_{[0,1]} f_{-\nu_r}^B(\varsigma_2,\tau_2) p_{-\nu_r}^{(T)}(\tau_2,\sigma_2) d\tau_2 \right]  \nonumber \\
&=\frac{1}{T^2} \sum_{s,r=0}^{T-1}   W^{(T)}(\omega - \nu_s) W^{(T)}(\omega -\nu_r) \int_{[0,1]^2 } f_{\nu_s}^B(\varsigma_1,\tau_1)   f_{-\nu_r}^B(\varsigma_2,\tau_2) \mathbb{E}\left[p_{\nu_s}^{(T)}(\tau_1,\sigma_1) p_{-\nu_r}^{(T)}(\tau_2,\sigma_2) \right] d\tau_1 d\tau_2. \label{CovD1}
\end{align}
By Proposition \ref{CovPerX}, 
\begin{align}
&\mathbb{E}\left[p_{\nu_s}^{(T)}(\tau_1,\sigma_1) p_{-\nu_r}^{(T)}(\tau_2,\sigma_2) \right]  = \nonumber \\
& \qquad \qquad  \mathbb{E} \Big[p_{\nu_s}^{(T)}(\tau_1,\sigma_1) \Big] \times \mathbb{E} \Big[ p_{-\nu_r}^{(T)}(\tau_2,\sigma_2)\Big] +  p_{r,s}^{(T)}(\tau_1,\sigma_1,\tau_2,\sigma_2) + \nonumber \\
& \qquad \qquad \eta(\nu_r - \nu_s) f_{\nu_s}^{XX} (\tau_1,\tau_2) f_{-\nu_s}^{XX}(\sigma_1,\sigma_2)+ \frac{1}{T} \eta(\nu_s- \nu_r) \vartheta_{1,\nu_s,\nu_r,f}(\tau_1,\tau_2) \odot\vartheta_{2,\nu_s,\nu_r,f}(\sigma_1,\sigma_2)  + \nonumber \\
& \qquad \qquad  \eta(\nu_s +\nu_r) f_{\nu_s}^{XX}(\tau_1,\sigma_2) f_{-\nu_s}^{XX}(\sigma_1,\tau_2) + \frac{1}{T}\eta(\nu_s+ \nu_r)  \vartheta_{3,\nu_s,\nu_r,f}(\sigma_1,\tau_2) \odot \vartheta_{4,\nu_s,\nu_r,f}(\tau_1,\sigma_2) + \nonumber \\
& \qquad \qquad \frac{1}{T^2}\vartheta_{1,\nu_s,\nu_r}(\tau_1,\sigma_1) \times \vartheta_{2,\nu_s,\nu_r}(\tau_2,\sigma_2) + \frac{1}{T^2}\vartheta_{3,\nu_s,\nu_r}(\tau_1,\sigma_2) \times \vartheta_{4,\nu_s,\nu_r}(\sigma_1,\tau_2) \nonumber \\
& \qquad \qquad = \mathbb{E} \Big[p_{\nu_s}^{(T)}(\tau_1,\sigma_1) \Big] \times \mathbb{E} \Big[ p_{-\nu_r}^{(T)}(\tau_2,\sigma_2)\Big]  + \mathbb{G}_{s,r}\label{CovPerX1}.
\end{align}
Moreover,
\begin{align}
&\mathbb{E}\big[D_1(\varsigma_1,\sigma_1) \big] \times  \mathbb{E}\big[\overline{D_1(\varsigma_2,\sigma_2) }\big] = \nonumber \\
&\frac{1}{T^2} \sum_{s,r=0}^{T-1}   W^{(T)}(\omega - \nu_s) W^{(T)}(\omega -\nu_r) \int_{[0,1]^2 }  f_{\nu_s}^B(\varsigma_1,\tau_1)   f_{-\nu_r}^B(\varsigma_2,\tau_2) \mathbb{E}\left[p_{\nu_s}^{(T)}(\tau_1,\sigma_1)\right] \times \mathbb{E}\left[ p_{-\nu_r}^{(T)}(\tau_2,\sigma_2) \right] d\tau_1 d\tau_2 
 \label{ExpD12}.
\end{align}
Combining \eqref{CovD1}, \eqref{CovPerX1} and \eqref{ExpD12}, we obtain
\begin{align*}
\mathbb{E} \Big[D_1(\varsigma_1,\sigma_1)  \overline{D_1(\varsigma_2,\sigma_2)}\Big]  - \mathbb{E} D_1(\varsigma_1,\sigma_1) \mathbb{E} \overline{D_1(\varsigma_2,\sigma_2)} = \\
\frac{1}{T^2} \sum_{s,r=0}^{T-1}   W^{(T)}(\omega - \nu_s) W^{(T)}(\omega -\nu_r) \int_{[0,1]^2 }  f_{\nu_s}^B(\varsigma_1,\tau_1)   f_{-\nu_r}^B(\varsigma_2,\tau_2) \times \mathbb{G}_{s,r} d\tau_1 d\tau_2.
\end{align*}
We must now consider each term resulting from the summands consituting $\mathbb{G}_{s,r}$.

First we begin with the summand $  f_{u_s}^{XX}(\tau_1,\tau_2) f_{-u_s}^{XX}(\sigma_1,\sigma_2) $ which contributes 
\begin{align*}
\frac{1}{T^2}\sum_{s=0}^{T-1} W^{(T)}(\omega -u_s) W^{(T)}(\omega -u_s)\int_{[0,1]^2 }  f_{u_s}^B(\varsigma_1,\tau_1)   f_{-u_s}^B(\varsigma_2,\tau_2)   f_{u_s}^{XX}(\tau_1,\tau_2) f_{-u_s}^{XX}(\sigma_1,\sigma_2) d\tau_1 d\tau_2.
\end{align*}
Taylor expanding yields
\begin{align*}
f_{u_s}^{B} &= f_{\omega}^{B} + (u_s -\omega) f_{\omega}^{B,(1)} + (u_s -\omega)^2 g_{2,u_s,\omega}^{B} \\
f_{u_s}^{XX} &=f_{\omega}^{XX} + (u_s -\omega) f_{\omega}^{XX,(1)} + (u_s -\omega)^2 g_{2,u_s,\omega}.
\end{align*}
Then, 
\begin{align*}
f_{u_s}^B f_{-u_s}^B f_{u_s}^{XX} f_{-u_s}^{XX} &= f_{\omega}^B f_{-\omega}^B f_{\omega} f_{-\omega}  + (u_s -\omega) \Big\{f_{\omega}^{B,(1)} f_{-\omega}^{B} f_{\omega}^{XX} f_{-\omega}^{XX} +  f_{\omega}^B f_{-\omega}^{B,(1)} f_{\omega}^{XX} f_{-\omega}^{XX}  +  \\
& \qquad f_{\omega}^B f_{-\omega}^B f_{\omega}^{XX,(1)} f_{-\omega}^{XX} + f_{\omega}^B f_{-\omega}^B f_{\omega}^{XX} f_{-\omega}^{XX,(1)} \Big\} 
 + (u_s -\omega)^2 \vartheta_{1,u_s} \odot \vartheta_{2,u_s}.
\end{align*}
These further terms are treated individually in the following bullet points:

\noindent $\bullet $ $f_{\omega}^B f_{-\omega}^B f_{\omega}^{XX} f_{-\omega}^{XX}$:  
Recall that
$$f_{\omega}^B(\varsigma,\tau) = \sum_{k,l}b_{kl}^{\omega} \varphi_k^{\omega}(\varsigma) \overline{\varphi_l^{\omega}(\tau)}.$$
Then,
\begin{align*}
&\frac{1}{T^2}\sum_{s=0}^{T-1} W^{(T)}(\omega -u_s) ^2 \int_{[0,1]^2} f_{\omega}^B(\varsigma_1,\tau_1)   f_{-\omega}^B(\varsigma_2,\tau_2)   f_{\omega}^{XX}(\tau_1,\tau_2) f_{-\omega}^{XX}(\sigma_1,\sigma_2) d\tau_1 d\tau_2\\
&=\frac{O(1)}{TB_T}\int_{[0,1]^2} \Big\{\sum_{k,l} b_{kl}^{\omega} \varphi_{k}^{\omega}(\varsigma_1) \overline{\varphi_{l}^{\omega}(\tau_1)} \Big\} \Big\{ \sum_{l}\lambda_l^{\omega}\varphi_l^{\omega}(\tau_1) \overline{\varphi_l^{\omega}(\tau_2)}\Big\} \Big\{\sum_{m,l} \overline{b_{ml}^{\omega}} \varphi_l^{\omega}(\tau_2) \overline{\varphi_m^{\omega}(\varsigma_2}) \Big\} \times  f_{-\omega}^{XX}(\sigma_1,\sigma_2) d\tau_1 d\tau_2\\
&=\frac{O(1)}{TB_T}\Big\{ \sum_{k,l,m} b_{kl}^{\omega} \overline{b_{ml}^{\omega}} \lambda_l^{\omega} \varphi_{k}^{\omega}(\varsigma_1)\overline{ \varphi_m^{\omega}(\varsigma_2) }\Big\}\times f_{-\omega}^{XX}(\sigma_1,\sigma_2).
\end{align*}
Multiplying by  $ \overline{\varphi_{i}^{\omega}(\varsigma_1)} \varphi_{i}^{\omega}(\varsigma_2) \varphi_{j}^{\omega}(\sigma_1) \overline{\varphi_{j}^{\omega}(\sigma_2)}$  and then integrating, we have 
\begin{align*}
\int_{[0,1]^2} \Big\{ \sum_{k,m,l} b_{kl}^{\omega} \overline{b_{ml}^{\omega}} \lambda_l^{\omega} \varphi_{k}^{\omega}(\varsigma_1)\overline{ \varphi_m^{\omega}(\varsigma_2) }\Big\} \overline{\varphi_{i}^{\omega}(\varsigma_1)} \varphi_{i}^{\omega}(\varsigma_2)  d\varsigma_1 d\varsigma_2 \int_{[0,1]^2}     f_{-\omega}^{XX}(\sigma_1,\sigma_2) \varphi_{j}^{\omega}(\sigma_1) \overline{\varphi_{j}^{\omega}(\sigma_2)}  d\sigma_1 d\sigma_2 \\
= \sum_l |b_{i l}^{\omega}|^2  \lambda_l^{\omega} \times \lambda_{j}^{\omega} .
\end{align*}
As in \eqref{ExpDel}, dividing by $(\lambda_{j} + \zeta_T)^2$ and taking the sum over $i$ and $j$ yields
\begin{align*}
\frac{1}{TB_T} \sum_{i,j} \sum_{l}  \frac{|b_{i l}^{\omega}|^2  \lambda_l^{\omega} \times \lambda_{j}^{\omega}}{(\lambda_{j}^{\omega}  + \zeta_T)^2} =  \sum_{i,l} |b_{il}^{\omega}|^2 \lambda_l^{\omega}  \frac{1}{B_T T} \sum_{j} \frac{\lambda_{j}^{\omega}}{(\lambda_{j}^{\omega}  + \zeta_T)^2} \leq    \sum_{l} l^{-2\beta - \alpha} \frac{O(1)}{B_T} T^{-(2\beta -1)/(2\beta + \alpha)}  \\
= \frac{O(1)}{B_T} T^{-(2\beta -1)/(2\beta + \alpha)}.
\end{align*}
\noindent  $\bullet$ $f_{\omega}^{B,(1)} f_{-\omega}^{B} f_{\omega}^{XX} f_{-\omega}^{XX} +  f_{\omega}^B f_{-\omega}^{B,(1)} f_{\omega}^{XX} f_{-\omega}^{XX}  +  f_{\omega}^B f_{-\omega}^B f_{\omega}^{XX,(1)} f_{-\omega}^{XX} + f_{\omega}^B f_{-\omega}^B f_{\omega}^{XX} f_{-\omega}^{XX,(1)}$: Recall the result in Lemma \ref{SumW}, which states that
\begin{align*}
\frac{1}{T} \sum_{s=0}^{T-1} W^{(T)}(\omega -u_s)^2 \times (\omega -u_s)  = O\big(T^{-1} B_T^{-2} \big).
\end{align*}
Note that 
$$\int_{[0,1]^2}  f_{\omega}^{B,(1)}(\varsigma_1,\tau_1)  f_{-\omega}^{B}(\varsigma_2,\tau_2) f_{\omega}^{XX}(\tau_1,\tau_2) f_{-\omega}^{XX}(\sigma_1,\sigma_2)  d\tau_1 d\tau_2$$
can be written in the form of $\vartheta_1(\varsigma_1,\varsigma_2) \times \vartheta_2(\sigma_1,\sigma_2)$ so that their corresponding operators $\V_1$ ,$\V_2$ have finite nuclear norm.  Now we may apply Lemma \ref{VPI}.

\noindent $\bullet$ $(u_s -\omega)^2 \vartheta_{1,u_s} \odot \vartheta_{2,u_s}$: Recall the result in Lemma \ref{SumW}, stating that
\begin{align*}
\frac{1}{T} \sum_{s=0}^{T-1} W^{(T)}(\omega -u_s)^2 \times (\omega -u_s)^2  = O\big(B_T \big).
\end{align*}
It follows that $\frac{1}{T^2} \sum_{s=0}^{T-1} W^{(T)}(\omega -u_s)^2 (\omega -u_s)^2 \times  \vartheta_{1,u_s}(\varsigma_1,\varsigma_2) \odot \vartheta_{2,u_s}(\sigma_1,\sigma_2)$  has the form  $O(T^{-1}B_T) \vartheta_{1,u}(\varsigma_1,\varsigma_2) \odot \vartheta_{2,u}(\sigma_1,\sigma_2) $. We may now apply Lemma 
\ref{VPI} as in the previous part.

This concludes our treatment of the summand  $  f_{u_s}(\tau_1,\tau_2) f_{-u_s}(\sigma_1,\sigma_2) $ in $\mathbb{G}_{s,r}$ .

We move on to the summand $ f_{u_s}^{XX}(\tau_1,\sigma_2) f_{-u_s}^{XX}(\sigma_1,\tau_2) $ in $\mathbb{G}_{s,r}$. This contributes the term
\begin{align*}
\frac{1}{T^2}\sum_{s=0}^{T-1} W^{(T)}(\omega -u_s) W^{(T)}(\omega  + u_s)\int_{[0,1]^2 }  f_{u_s}^B(\varsigma_1,\tau_1)   f_{-u_s}^B(\varsigma_2,\tau_2)   f_{u_s}(\tau_1,\sigma_2) f_{-u_s}(\sigma_1,\tau_2) d\tau_1 d\tau_2
\end{align*}
We apply the same process as with the previous term $f_{u_s}^{XX}(\tau_1,\tau_2) f_{-u_s}^{XX}(\sigma_1,\sigma_2) $ in $\mathbb{G}_{s,r}$. This is done in the following bullet points:

\medskip

\noindent $\bullet$ $f_{\omega}^B f_{-\omega}^B f_{\omega}^{XX} f_{-\omega}^{XX}$: We start with the integral
\begin{align*}
\int_{[0,1]} f_{\omega}^B(\varsigma_1,\tau_1) f_{\omega}^{XX}(\tau_1,\sigma_2) d\tau_1 =  \int_{[0,1]}\Big\{\sum_{k,l} b_{kl}^{\omega} \varphi_k^{\omega}(\varsigma_1) \overline{\varphi_l^{\omega}(\tau_1) }  \Big\} \Big\{\sum_l \lambda_l^{\omega}\varphi_{l}^{\omega}(\tau_1) \overline{\varphi_{l}^{\omega}(\sigma_2)} \Big\} d\tau_1   \\
 = \sum_{k,l} b_{kl}^{\omega} \lambda_l^{\omega}\varphi_{k}^{\omega}(\varsigma_1) \overline{\varphi_l^{\omega}(\sigma_2)}\\
 \int_{[0,1]} f_{-\omega}^B(\varsigma_2,\tau_2) f_{-\omega}^{XX}(\sigma_1,\tau_2) d\tau_2 =  \int_{[0,1]}\Big\{\sum_{u,v} \overline{b_{uv}^{\omega}} \overline{\varphi_u^{\omega}(\varsigma_2)}  \varphi_v^{\omega}(\tau_2)  \Big\} \Big\{\sum_v \lambda_v^{\omega} \overline{\varphi_{v}^{\omega}(\sigma_1)} \varphi_{v}^{\omega}(\tau_2)\Big\} d\tau_2   \\
 = \sum_{u,v} \overline{b_{uv}^{\omega}} \lambda_{v'}^{\omega} \overline{\varphi_{u}^{\omega}(\varsigma_2)} \overline{\varphi_{v'}^{\omega}(\sigma_1)}\\
\end{align*}
Then,
\begin{align*}
\int_{[0,1]^2 }  f_{\omega}^B(\varsigma_1,\tau_1)   f_{-\omega}^B(\varsigma_2,\tau_2)   f_{\omega}^{XX}(\tau_1,\sigma_2) f_{-\omega}^{XX}(\sigma_1,\tau_2) d\tau_1 d\tau_2 =\\
 \Big\{\sum_{k,l} b_{kl}^{\omega} \lambda_l^{\omega} \varphi_k^{\omega}(\varsigma_1) \overline{\varphi_l^{\omega}(\sigma_2)}\Big\} \times \Big\{ \sum_{u,v} \overline{b_{uv}^{\omega}} \lambda_{v'}^{\omega} \overline{\varphi_u^{\omega}(\varsigma_2)} \overline{\varphi_{v'}^{\omega}(\sigma_1)}\Big\}.
\end{align*}
We multiply by $\overline{\varphi_i^{\omega}(\varsigma_1)} \varphi_j^{\omega}(\sigma_1) \varphi_i^{\omega}(\varsigma_2) \overline{\varphi_j^{\omega}(\sigma_2)}$ and integrate to obtain 
\begin{align*}
b_{ij'}^{\omega} \lambda_{j'}^{\omega} \overline{b_{ij'}^{\omega}} \lambda_j^{\omega}.
\end{align*}
Then,
\begin{align*}
\frac{1}{TB_T}\sum_{i,j} \frac{\big|b_{ij^{'}}^{\omega}\big|^2 \lambda_{j'}^{\omega}\lambda_j^{\omega}}{(\lambda_j^{\omega}  + \zeta_T)^2}  = \frac{1}{TB_T} \sum_j \frac{\sum_i |b_{ij'}^{\omega}|^2 \lambda_{j'}^{\omega} \lambda_j^{\omega}}{(\lambda_j^{\omega} + \zeta_T)^2} =  \frac{1}{TB_T}  \sum_j \frac{(j^{'})^{-2\beta-\alpha}  \lambda_j^{\omega}}{(\lambda_j^{\omega} + \zeta_T)^2} \\
= \frac{O(1)}{B_T} O\big(T^{-(2\beta -1)/(\alpha + 2\beta)} \big).
\end{align*}
\noindent $\bullet$ $f_{\omega}^{B,(1)} f_{-\omega}^{B} f_{\omega}^{XX} f_{-\omega}^{XX} +  f_{\omega}^B f_{-\omega}^{B,(1)} f_{\omega}^{XX} f_{-\omega}^{XX}  +  f_{\omega}^B f_{-\omega}^B f_{\omega}^{XX,(1)} f_{-\omega}^{XX} + f_{\omega}^B f_{-\omega}^B f_{\omega}^{XX} f_{-\omega}^{XX,(1)}$: Note that
\begin{align*}
\int_{[0,1]^2 } f_{\omega}^{B,(1)}(\varsigma_1,\tau_1) f_{-\omega}^{B}(\varsigma_2,\tau_2) f_{\omega}^{XX}(\tau_1,\sigma_2) f_{-\omega}^{XX}(\sigma_1,\tau_2) d\tau_1 d\tau_2
\end{align*}
can be rewritten via a form $\vartheta_1(\varsigma_1,\sigma_2) \times \vartheta_2(\sigma_1,\varsigma_2)$ with their correspoding operators have finite Schatten 1-norm $\hs \V_1\hs_1, \hs\V_2\hs_1 < C$. Applying Lemma \ref{SumW},
\begin{align*}
\frac{1}{T^2}\sum_{s=0}^{T-1} W^{(T)}(\omega -u_s) W^{(T)}(\omega + u_s) (\omega -u_s) = 1_{I_T}(\omega) O(T^{-1}),
\end{align*}
and then we obtain the bound
\begin{align*}
1_{I_T}(\omega) O\big(T^{-1}\big) \vartheta_1(\varsigma_1,\sigma_2) \vartheta_2(\sigma_1,\varsigma_2).
\end{align*}
We multiply this by $\overline{\varphi_i^{\omega}(\varsigma_1)} \varphi_j^{\omega}(\sigma_1) \varphi_i^{\omega}(\varsigma_2) \overline{\varphi_j^{\omega}(\sigma_2)}$ and integrate. Applying Lemma \ref{VPI}, we obtain a bound of order $O\big( T^{-1}\zeta_T^{-2} \big)$. Now we integrate over $\omega \in I_T$, obtaining an integral of order
$$O\big(T^{-1} \zeta_T^{-2}B_T \big)  = O\big(T^{-(2\beta -1)/(\alpha + 2\beta)} \big).$$

\noindent $\bullet$ $(u_s -\omega)^2 \vartheta_{1,u_s} \odot \vartheta_{2,u_s}$: The same argument is applied here, using Lemma \ref{VPI} and Lemma \ref{SumW}.

\medskip
\noindent Now we move on to the summand $\frac{1}{T} \eta(\nu_s- \nu_r) \vartheta_{1,\nu_s,\nu_r,f}(\tau_1,\tau_2) \odot \vartheta_{2,\nu_s,\nu_r,f}(\sigma_1,\sigma_2)$  of $\mathbb{G}_{s,r}$. Note that
\begin{align*}
\frac{1}{T^3} \sum_{s=0}^{T-1} W^{(T)}(\omega -\nu_s)^2 f_{\nu_s} f_{-\nu_s}^B \vartheta_{1,\nu_s,\nu_r,f} \odot \vartheta_{2,\nu_s,\nu_r,f}  = \frac{1}{T^2} \times \frac{1}{T} \sum_{s=0}^{T-1} W^{(T)}(\omega -\nu_s)^2 \vartheta_{1,\nu_s,b} \odot \vartheta_{2,\nu_s,b}.
\end{align*}
and the latter is $O\big(T^{-2} B_T^{-1}\big)$, uniformly over $s$. Similarly to our treatment of $\scrS_3$, we may apply Lemma \ref{VPI}.

\medskip
The same argument can be applied to the summand $\frac{1}{T}\eta(\nu_s+ \nu_r)  \vartheta_{3,\nu_s,\nu_r,f}(\sigma_1,\tau_2) \odot \vartheta_{4,\nu_s,\nu_r,f}(\tau_1,\sigma_2)$  of $\mathbb{G}_{s,r}$.

We thus move on to the summands $\vartheta_{1\nu_s,\nu_r} \times \vartheta_{2,\nu_s,\nu_r}$ and $\vartheta_{3,\nu_s,\nu_r} \times \vartheta_{4,\nu_s,\nu_r}$  of $\mathbb{G}_{s,r}$. The quantity
\begin{align*}
\frac{1}{T^4} \sum_{s,r=0}^{T-1} W^{(T)}(\omega - \nu_s)^2 \big[\vartheta_{1,\nu_s,\nu_r} \times \vartheta_{2,\nu_s,\nu_r}  + \vartheta_{3,\nu_s,\nu_r} \times  \vartheta_{4,\nu_s,\nu_r}\big].
\end{align*}
is uniformly of order $O\big(T^{-2} B_T^{-1} \big)$. Similarly with the etimation of  $\scrS_3$ we apply Lemma \ref{VPI}.

\medskip
Finally, we turn to the summand $p_{r,s}^{(T)}(\tau_1,\sigma_1,\tau_2,\sigma_2)$  of $\mathbb{G}_{s,r}$. We need to bound
\begin{align}
\frac{1}{T^2}\sum_{s=0}^{T-1} W^{(T)}(\omega -u_s) W^{(T)}(\omega -u_r)\int_{[0,1]^2 }  f_{u_s}^B(\varsigma_1,\tau_1)   f_{-u_r}^B(\varsigma_2,\tau_2)   p_{r,s}^{(T)}(\tau_1,\sigma_1,\tau_2,\sigma_2) d\tau_1 d\tau_2. \label{cum4B}
\end{align}
Let $\F_{u_s}^B \mathscr{P}_{r,s}^{(T)} \F_{-u_r}^{B}$ be the operator corresponding to the kernels in the integrand. Then,
\begin{align*}
\hs \F_{u_s}^B \mathscr{P}_{r,s}^{(T)} \F_{-u_r}^{B} \hs_1 \leq \hs \F_{u_s}^B \hs_2 \times \hs\mathscr{P}_{r,s}^{(T)}\hs_2 \times \hs \F_{u_s}^B\hs_2 = O(T^{-1}).
\end{align*}
Applying Lemma \ref{VPI} and Proposition \ref{4cumS1}, we obtain a bound of order $O(T^{-1} \zeta_T^{-2} ) = \frac{1}{B_T} O\big(T^{(2\beta-1)/(\alpha + 2\beta)} \big)$.
\subsection*{Bounding $\scrS_2$}
\begin{align*}
\scrS_2 :&= \Big( \widehat{\F}_{\omega,T}^{YX}- \F_{\omega}^{YX} \Big) \Big(\big[\F_{\omega}^{XX} + \zeta_T \mathscr{I}\big]^{-1} - \Big[\widehat{\F}^{XX}_{\omega} +   \zeta_T \mathscr{I} \Big]^{-1} \Big) \\
&= \Big( \widehat{\F}_{\omega,T}^{YX}- \F_{\omega}^{YX} \Big) \Big(\F_{\omega}^{XX} + \zeta_T \mathscr{I} \Big)^{-1} \Delta \Big(\F_{\omega}^{XX} + \zeta_T \mathscr{I} \Big)^{-1} \Big( \mathscr{I} + \Delta  \big[\F_{\omega}^{XX} + \zeta_T \mathscr{I} \big] ^{-1} \Big)^{-1} \\
& = \scrS_1  \Delta \Big(\F_{\omega}^{XX} + \zeta_T \mathscr{I} \Big)^{-1} \Big( \mathscr{I} + \Delta  \big[\F_{\omega}^{XX} + \zeta_T \mathscr{I} \big] ^{-1} \Big)^{-1}.
\end{align*}
The product of the { third (second) and fourth (third)} terms has finite  nuclear norm by our treatment of $\scrS_3$. The last term has finite nuclear norm on the set $G_T$. Then, we have the order of $T^{-(2\beta-1)/(\alpha + 2\beta)}$ on $G_T$ in the Hilbert-Schimdt norm. 

In conclusion, all terms have been shown to be bounded above by at most $\frac{1}{B_T}O\Big(T^{-(  2\beta - 1)/(\alpha + 2\beta)} \Big)$ on the set $G_T$, and the proof is complete.

\section{Appendix}\label{appendix}

The Appendix contains the proofs to several auxiliary results that are required in the proof of Theorem \ref{main_theorem}. 
\begin{lemma}\label{SCoij}
Let $\big\{\varphi_i \big\}$ be a complete orthonormal system of functions in $L^2\big([0,1]^2, \mathbb{C} \big)$ and $\phi(\tau,\sigma)$ be a random bivariate function in $L^2\big([0,1]^2,\mathbb{C}\big)$ with induced integral operator $\Phi$, then
\begin{align*}
(A)\qquad &\phi(\tau,\sigma) = \sum_{i,j} \phi_{ij} \varphi_i(\tau) \overline{\varphi_j(\sigma)}, a.s.,\\
(B)\qquad &\mathbb{E} \big|\phi_{ij}\big|^2 = \int_{[0,1]^2} \mathbb{E} \big[ \phi(\tau_1,\sigma_1) \overline{\phi(\tau_2,\sigma_2)}\big] \times  \overline{\varphi_i(\tau_1)} \varphi_j(\sigma_1) \varphi_i(\tau_2) \overline{\varphi_j(\sigma_2)} d\tau_1 d\sigma_1 d\tau_2 d\sigma_2 \\
(C)\qquad &\mathbb{E} \hs\Phi\hs_2^2 = \sum_{i,j}  \int_{[0,1]^2} \mathbb{E} \big[ \phi(\tau_1,\sigma_1) \overline{\phi(\tau_2,\sigma_2)}\big] \times  \overline{\varphi_i(\tau_1)} \varphi_j(\sigma_1)\varphi_i(\tau_2) \overline{ \varphi_j (\sigma_2)} d \tau_1 d\tau_2 d\sigma_1 d\sigma_2.
\end{align*} 
\end{lemma}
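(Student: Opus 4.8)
This is a structural result that unpacks the Hilbert--Schmidt norm of a random kernel into its generalised Fourier coefficients, and all three parts follow from standard Hilbert-space theory once the correct orthonormal basis is identified. The plan is to treat the parts in order, each feeding into the next.

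For (A), I would first verify that the tensor family $\{(\tau,\sigma)\mapsto \varphi_i(\tau)\overline{\varphi_j(\sigma)}\}_{i,j}$ is itself a complete orthonormal system of $L^2([0,1]^2,\mathbb{C})$. Orthonormality is immediate from Fubini, since the inner product factorises as $\langle \varphi_i\otimes\overline{\varphi_j},\,\varphi_k\otimes\overline{\varphi_l}\rangle = \langle\varphi_i,\varphi_k\rangle\,\overline{\langle\varphi_j,\varphi_l\rangle} = \delta_{ik}\delta_{jl}$, while completeness is the usual Hilbert-space tensor identification $L^2([0,1]^2)\cong L^2([0,1])\otimes L^2([0,1])$: a function orthogonal to every $\varphi_i(\tau)\overline{\varphi_j(\sigma)}$ is shown to vanish by fixing one argument, expanding the resulting section in $\{\varphi_j\}$, and using that the conjugate of a complete orthonormal system is again complete. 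Since $\phi$ lies almost surely in $L^2([0,1]^2,\mathbb{C})$, the abstract expansion theorem in a separable Hilbert space then yields the claimed $L^2$-convergent representation, with coefficients $\phi_{ij}=\int_{[0,1]^2}\phi(\tau,\sigma)\overline{\varphi_i(\tau)}\varphi_j(\sigma)\,d\tau\,d\sigma$.

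For (B), the idea is purely bookkeeping: write $|\phi_{ij}|^2=\phi_{ij}\overline{\phi_{ij}}$, realise the two factors as integrals over two disjoint sets of dummy variables $(\tau_1,\sigma_1)$ and $(\tau_2,\sigma_2)$, and collapse the product into a single integral over $[0,1]^4$ of $\phi(\tau_1,\sigma_1)\overline{\phi(\tau_2,\sigma_2)}$ weighted by $\overline{\varphi_i(\tau_1)}\varphi_j(\sigma_1)\varphi_i(\tau_2)\overline{\varphi_j(\sigma_2)}$. The only substantive step is to push the expectation inside the integral, which is justified by Fubini's theorem; the required absolute integrability follows from the implicit moment hypothesis $\mathbb{E}\|\phi\|^2<\infty$ via Cauchy--Schwarz over the probability space and then over $[0,1]^2$, each factor being controlled by $\sqrt{\mathbb{E}\|\phi\|^2}\,\|\varphi_i\|\,\|\varphi_j\|$. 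For (C), I would invoke Parseval twice: the Hilbert--Schmidt norm of an integral operator equals the $L^2$ norm of its kernel, so $\hs\Phi\hs_2^2=\|\phi\|_{L^2([0,1]^2)}^2=\sum_{i,j}|\phi_{ij}|^2$ pathwise, and taking expectations while exchanging sum with expectation (legitimate by Tonelli, every summand being nonnegative) and substituting (B) gives the claim.

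The computations are routine; the one point requiring genuine care is the Fubini/Tonelli bookkeeping in (B) and (C), together with the moment hypothesis $\mathbb{E}\|\phi\|^2<\infty$, without which the coefficients need not even be square-integrable. The complex conjugations must also be tracked scrupulously, since the basis $\varphi_i\otimes\overline{\varphi_j}$ is not symmetric in its two slots and a misplaced conjugate would corrupt the coefficient formula relied upon throughout the proof of Theorem \ref{main_theorem}.
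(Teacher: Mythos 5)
Your proposal is correct and follows essentially the same route as the paper's proof: expand $\phi$ in the tensor basis $\varphi_i(\tau)\overline{\varphi_j(\sigma)}$, write $|\phi_{ij}|^2$ as a product of two integrals collapsed into one integral over $[0,1]^4$, and identify $\hs\Phi\hs_2^2$ with $\sum_{i,j}|\phi_{ij}|^2$ via Parseval. You are somewhat more explicit than the paper about the completeness of the tensor system, the Fubini/Tonelli justifications, and the implicit hypothesis $\mathbb{E}\|\phi\|^2<\infty$, but these are refinements of the same argument rather than a different one.
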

\begin{proof}[Proof]
Since  $\big\{\varphi_i \big\}$ is a complete orthogonal system, $\phi \in L_2$ and
\begin{align*}
\phi_{ij} &= \int_{[0,1]^2} \phi(\tau,\sigma) \overline{\varphi_i(\tau)} \varphi_j(\sigma) d\tau d\sigma,
\end{align*}
so that part (A) is proved. We have
\begin{align*}
\big|\phi_{ij}\big|^2 &= \int_{[0,1]^2} \phi(\tau_1,\sigma_1) \overline{\varphi_i(\tau_1)} \varphi_j(\sigma_1) d\tau_1 d\sigma_1 \int_{[0,1]^2} \overline{\phi(\tau_2,\sigma_2)} \varphi_i(\tau_2) \overline{\varphi_j(\sigma_2)} d\tau_2 d\sigma_2\\
& = \int_{[0,1]^4}  \phi(\tau_1,\sigma_1) \overline{\phi(\tau_2,\sigma_2)} \times \overline{\varphi_i(\tau_1)}  \varphi_j(\sigma_1)\varphi_i(\tau_2) \overline{\varphi_j(\sigma_2)} d\tau_1 d\sigma_1 d\tau_2 d\sigma_2 \\
\mathbb{E} \big|\phi_{ij}\big|^2 & = \int_{[0,1]^4} \mathbb{E} \Big[  \phi(\tau_1,\sigma_1) \overline{\phi(\tau_2,\sigma_2)}\Big] \times \overline{\varphi_i(\tau_1)} \varphi_j(\sigma_1) \varphi_i(\tau_2)\overline{\varphi_j(\sigma_2) } d\tau_1 d\sigma_1 d\tau_2 d\sigma_2,
\end{align*}
This proves part (B). Now,  by definition,
\begin{align*}
\mathbb{E} \hs\Phi \hs_2^2 = \mathbb{E} \sum_{i,j} \big| \phi_{ij}\big|^2 = \sum_{i,j} \int_{[0,1]^4} \mathbb{E} \big\{ \phi(\tau_1,\sigma_1) \overline{\phi(\tau_2,\sigma_2)}\big\} \times \overline{\varphi_i(\tau_1)} \varphi_i(\tau_2) \varphi_j(\sigma_1) \overline{\varphi_j (\sigma_2)}d \tau_1 d\tau_2 d\sigma_1 d\sigma_2,
\end{align*}
and
\begin{align*}
\int_{[0,1]^2} \phi(\tau,\sigma) \overline{\phi(\tau,\sigma)} d\tau d\sigma = \int_{[0,1]^2} \Big\{\sum_{i,j} \phi_{ij} \overline{\varphi_i(\tau)} \varphi_j(\sigma) \Big\} \times \Big\{ \sum_{k,l} \overline{\phi_{kl}} \varphi_k(\tau) \overline{\varphi_l(\sigma)} \Big\} d\tau d\sigma\\
= \int_{[0,1]^2} \sum_{i,j,k,l} \phi_{ij} \overline{\phi_{kl} } \times \overline{\varphi_i(\tau)} \varphi_k(\tau) \varphi_j(\sigma) \overline{\varphi_l(\sigma)} d\tau d\sigma = \sum_{i,j} \big|\phi_{ij}\big|^2,
\end{align*}
proving part (C).
\end{proof}

\begin{lemma}\label{VPI}
 Let $\{\varphi_i \}$ be a complete  orthonormal basis in $L^2([0,1],\mathbb{C})$ that is closed under conjugation (i.e. satisfying the condition $\{\varphi_i: i =1,2,\ldots \} = \{\overline{\varphi_i}: i =1,2,\ldots \}$). Let $\xi_1, \xi_2 \in L^2([0,1]^2,\mathbb{C})$  and $\xi_3 \in L^2([0,1]^4,\mathbb{C})$. Let $\U_i$ be the induced operator of $\xi_i$ for $i =1,2,3$. Then
\begin{align*}
&(A)\quad   \sum_{i,j} \left |\int_{[0,1]^4} \xi_1(\tau_1,\tau_2) \xi_2(\sigma_1,\sigma_2)  \times  \overline{\varphi_i(\tau_1)}\varphi_j(\sigma_1) \varphi_i(\tau_2)\overline{\varphi_j(\sigma_2)}  d\tau_1 d\sigma_1 d\tau_2 d\sigma_2 \right|  \leq \hs \U_1\hs_{1} \hs \U_2\hs_{1} \\
&(B)\quad  \sum_{i,j} \left |\int_{[0,1]^4} \xi_1(\tau_1,\sigma_2) \xi_2(\sigma_1,\tau_2)  \times  \overline{\varphi_i(\tau_1)}\varphi_j(\sigma_1) \varphi_i(\tau_2)\overline{\varphi_j(\sigma_2)}  d\tau_1 d\sigma_1 d\tau_2 d\sigma_2 \right|  \leq \hs\U_1\hs_{2}^2 + \hs\U_2\hs_{2}^2\\
&(C)\quad  \sum_{i,j} \left| \int_{[0,1]^4} \xi_3(\tau_1,\sigma_1,\tau_2,\sigma_2)  \times  \overline{\varphi_i(\tau_1)}\varphi_j(\sigma_1) \varphi_i(\tau_2)\overline{\varphi_j(\sigma_2)}  d\tau_1 d\sigma_1 d\tau_2 d\sigma_2\right| \leq \hs \U_3 \hs_1.
\end{align*}
\end{lemma}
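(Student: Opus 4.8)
The three inequalities all reduce to a single operator-theoretic fact, which I would isolate first. For any trace-class operator $T$ on a separable Hilbert space and any orthonormal basis $\{e_k\}$, the absolute diagonal is controlled by the nuclear norm,
\[
\sum_k \big|\langle T e_k, e_k\rangle\big| \leq \hs T\hs_1.
\]
I would prove this by choosing phases $\theta_k$ with $\langle Te_k, e_k\rangle = e^{\mathbf{i}\theta_k}\,\big|\langle Te_k, e_k\rangle\big|$ and introducing the diagonal unitary $U$ determined by $U e_k = e^{-\mathbf{i}\theta_k} e_k$; then $\sum_k \big|\langle Te_k, e_k\rangle\big| = \sum_k \langle UTe_k, e_k\rangle = \mathrm{tr}(UT) \leq \hs UT\hs_1 \leq \hs U\hs_\infty \hs T\hs_1 = \hs T\hs_1$, since $U$ is unitary. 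The entire lemma is then a matter of recognizing each integral as a diagonal entry of $\U_1$, $\U_2$, or $\U_3$ in a suitably chosen orthonormal system and invoking this bound.

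For part (A), I would first exploit the product structure to factor the quadruple integral: since $\xi_1$ depends only on $(\tau_1,\tau_2)$ and $\xi_2$ only on $(\sigma_1,\sigma_2)$, the integral splits as $A_i B_j$, where a direct computation identifies $A_i = \langle \U_1 \varphi_i, \varphi_i\rangle$ and $B_j = \langle \U_2 \overline{\varphi_j}, \overline{\varphi_j}\rangle$ (the conjugated functions entering because of the placement of $\varphi_j(\sigma_1)$ against $\overline{\varphi_j(\sigma_2)}$). Because conjugation is anti-unitary, $\{\overline{\varphi_j}\}$ is again an orthonormal basis — indeed it coincides with $\{\varphi_j\}$ as a set by the closure-under-conjugation hypothesis (B2) — so applying the diagonal bound in each basis gives $\sum_{i,j}|A_iB_j| = \big(\sum_i|A_i|\big)\big(\sum_j|B_j|\big) \leq \hs\U_1\hs_1\hs\U_2\hs_1$.

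For part (C), the integral does not factor, so I would instead read it as a single diagonal entry on $L^2([0,1]^2)$. Viewing $\U_3$ as the operator with kernel $\xi_3\big((\tau_1,\sigma_1),(\tau_2,\sigma_2)\big)$ and setting $g_{ij}(\tau,\sigma) = \varphi_i(\tau)\overline{\varphi_j(\sigma)}$, a rearrangement (the input pair $(\tau_2,\sigma_2)$ pairs with $g_{ij}$ and the output pair $(\tau_1,\sigma_1)$ with $\overline{g_{ij}}$) shows the integral equals $\langle \U_3 g_{ij}, g_{ij}\rangle$. Since $\{\varphi_i\}$ and $\{\overline{\varphi_j}\}$ are each orthonormal bases of $L^2([0,1])$, the tensor products $\{g_{ij}\}_{i,j}$ form an orthonormal basis of $L^2([0,1]^2)$, and the diagonal bound yields $\sum_{i,j}\big|\langle\U_3 g_{ij},g_{ij}\rangle\big| \leq \hs\U_3\hs_1$.

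Part (B) is the softest case: the mixed arguments block a clean diagonal interpretation, but the integral factors into $C_{ij}D_{ij}$ with $C_{ij} = \langle \xi_1, \varphi_i\otimes\varphi_j\rangle$ and $D_{ij} = \langle\xi_2,\overline{\varphi_j}\otimes\overline{\varphi_i}\rangle$, i.e. genuine expansion coefficients of the two kernels in (conjugated) tensor bases. Parseval then gives $\sum_{ij}|C_{ij}|^2 = \|\xi_1\|_{L^2}^2 = \hs\U_1\hs_2^2$ and likewise $\sum_{ij}|D_{ij}|^2 = \hs\U_2\hs_2^2$, so Cauchy--Schwarz (or AM--GM) applied to $\sum_{ij}|C_{ij}D_{ij}|$ delivers the stated bound $\hs\U_1\hs_2^2 + \hs\U_2\hs_2^2$. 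I expect the main obstacle to be purely bookkeeping — tracking precisely which functions are conjugated so that the diagonal-entry identifications in (A) and (C) are exact, and confirming that the relevant conjugated tensor families are complete orthonormal systems — while the genuine analytic content sits entirely in the one-line diagonal contraction inequality established at the outset.
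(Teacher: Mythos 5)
Your proposal is correct and follows essentially the same route as the paper: both identify each integral as a product of diagonal (or matrix) entries of $\U_1$, $\U_2$, $\U_3$ in the bases $\{\varphi_i\}$, $\{\overline{\varphi_j}\}$ and their tensor products, then bound part (A) and part (C) by the nuclear norms and part (B) by Parseval plus AM--GM. The only difference is cosmetic and in your favor: you explicitly prove the diagonal contraction $\sum_k |\langle T e_k, e_k\rangle| \leq \hs T \hs_1$ via the diagonal-unitary trick, whereas the paper invokes this bound implicitly, and in part (B) you use conjugated tensor bases with Parseval where the paper reindexes through the conjugation bijection $i \mapsto i'$ of assumption (B2).
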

\begin{proof}[Proof] 
Let $\xi_k(\tau,\sigma)= \sum_{i,j} \xi_{k,ij} \varphi_i(\tau) \overline{\varphi_j(\sigma)} $ for $k=1,2$.

\noindent \noindent (A) We start with 
\begin{align*}
&\int_{[0,1]^4} \xi_1(\tau_1,\tau_2) \times \xi_2(\sigma_1,\sigma_2) \times \overline{\varphi_i(\tau_1)}\varphi_j(\sigma_1) \varphi_i(\tau_2) \overline{\varphi_j(\sigma_2)}  d\tau_1 d\sigma_1 d\tau_2 d\sigma_2  \\
& \qquad \qquad  = \int_{[0,1]^2}  \xi_1(\tau_1,\tau_2)  \overline{\varphi_i(\tau_1)}  \varphi_i(\tau_2) d\tau_1 d\tau_2 \times \int_{[0,1]^2}  \xi_2(\sigma_1,\sigma_2)   \varphi_j(\sigma_1) \overline{\varphi_j(\sigma_2)} d\sigma_1 d\sigma_2  \\
&\qquad \qquad = \xi_{1,ii} \times \xi_{2,jj}.
\end{align*}
Then
\begin{align*}
\sum_{i,j} \big| \xi_{1,ii} \times \xi_{2,jj}\big| \leq \sum_i \big|\xi_{1,ii} \big| \times \sum_j \big| \xi_{2,jj} \big| \leq \hs\U_1\hs_{1} \hs\U_2\hs_{1}.
\end{align*}

\noindent (B) Recall that for each $i$, there exists only one $i'$ such that  $\overline{\varphi_i} =\varphi_{i^{'}}$. So,
\begin{align*}
&\int_{[0,1]^4} \xi_1(\tau_1,\sigma_2) \times \xi_2(\sigma_1,\tau_2) \times \overline{\varphi_i(\tau_1)}\varphi_j(\sigma_1) \varphi_i(\tau_2) \overline{\varphi_j(\sigma_2)}  d\tau_1 d\sigma_1 d\tau_2 d\sigma_2 \\
&\qquad = \int_{[0,1]^2} \xi_1(\tau_1,\sigma_2)   \overline{\varphi_i(\tau_1)}  \overline{\varphi_j(\sigma_2)} d\tau_1 d\sigma_2 \times  \int_{[0,1]^2}  \xi_2(\sigma_1,\tau_2)    \varphi_j(\sigma_1) \varphi_i(\tau_2) d\sigma_1 d\tau_2 \\
&\qquad  = \int_{[0,1]^2} \sum_{k,l}\xi_{1,kl} \varphi_k(\tau_1) \overline{\varphi_l(\sigma_2)}   \overline{\varphi_i(\tau_1)}  \overline{\varphi_j(\sigma_2)}d\tau_1 d\sigma_2 \times  \int_{[0,1]^2}  \sum_{u,v} \xi_{2,uv} \varphi_u(\sigma_1) \overline{\varphi_v(\tau_2)}   \varphi_j(\sigma_1) \varphi_i(\tau_2) d\sigma_1 d\tau_2 \\
& \qquad = \xi_{1,ij^{'}}  \times \xi_{2,j^{'}i}.
\end{align*}
Taking the sum over $i,j$ now yields
\begin{align*}
\sum_{i,j}\big| \xi_{1,ij^{'}}  \times \xi_{2,j^{'}i} \big| \leq \sum_{i,j} \big\{ \big| \xi_{1,ij}\big|^2 +  \big| \xi_{2,ij}\big| ^2 \big\} =  \hs \U_1\hs_{2}^2 + \hs \U_2\hs_{2}^2.
\end{align*}

\noindent (C) Since the $\varphi_i$ is a complete orthonormal basis in $L^2([0,1],\mathbb{C})$, the collection $\{ \overline{\varphi_i}  \varphi_j :i,j\}$ is a complete orthonormal basis in $L^2([0,1]^2,\mathbb{C})$. Writing $\overline{\varphi_i} \varphi_j = \varphi_{ij}$ we have
\begin{align*}
 \int_{[0,1]^4} \xi_3(\tau_1,\sigma_1,\tau_2,\sigma_2)  \times  \overline{\varphi_i(\tau_1)}\varphi_j(\sigma_1) \varphi_i(\tau_2)\overline{\varphi_j(\sigma_2)}  d\tau_1 d\sigma_1 d\tau_2 d\sigma_2 = \langle \U_3 \varphi_{ij}, \varphi_{ij} \rangle. 
\end{align*}
Taking the absolute value and summing over $i,j$, we get the upper bound $\hs\U_3 \hs_1$.
\end{proof}

\begin{lemma}\label{SchNo12}
The spectral density of $X$ and the cross-spectral density of $\{X,Y\}$ have the form
\begin{eqnarray*}
&f_{\nu}^{XX} = f_{\nu, \mathfrak{R}}^{XX} + \mathbf{i}  f_{\nu, \mathfrak{I}}^{XX} \\
&f_{\nu}^B = f_{\nu, \mathfrak{R}}^B + \mathbf{i}  f_{\nu, \mathfrak{I}}^B
\end{eqnarray*}
and  assume that they satisfy condition (B3) and (B4). Furthermore, for $\nu,\omega,\alpha \in [0,2\pi]$,  they admit the Taylor expansions
\begin{align*}
f_{\nu}^{XX} &= f_{\omega}^{XX} + \sum_{j=1}^{p-1} \frac{(\nu -\omega)^j}{j!} f_{\omega}^{XX,(j)}  + \big(\nu - \omega\big)^p g_{p,\nu,\omega}\\
f_{\nu}^B &= f_{\omega}^B + \sum_{j=1}^{p-1} \frac{(\nu-\omega)^j}{j!} f_{\omega}^{B,(j)} + (\nu-\omega)^{p} g_{p,\nu,\omega}^B,
\end{align*}
where $f_{\omega}^{XX,(j)} = \frac{\partial^j f_{\alpha}^{XX}}{\partial \alpha^j}\Big|_{\alpha = \omega}$, $f_{\omega}^{B,(j)} = \frac{\partial^j f_{\alpha}^B}{\partial \alpha^j}\Big|_{\alpha = \omega}$, and
\begin{align*}
\big(\nu - \omega\big)^p g_{p,\nu,\omega} &=  \int_{\omega}^{\nu} \frac{(\nu - \zeta)^p}{p!}f_{\zeta,\mathfrak{R}}^{XX,(p+1)} d\zeta + \mathbf{i}   \int_{\omega}^{\nu} \frac{(\nu - \zeta)^p}{p!}f_{\zeta,\mathfrak{I}}^{XX,(p+1)} d\zeta \\
(\nu - \omega)^p g_{p,\nu,\omega}^B &=  \int_{\omega}^{\nu} \frac{(\nu - \zeta)^p}{p!}f_{\zeta,\mathfrak{R}}^{B,(p+1)} d\zeta + \mathbf{i}   \int_{\omega}^{\nu} \frac{(\nu - \zeta)^p}{p!}f_{\zeta,\mathfrak{I}}^{B,(p+1)} d\zeta \\
f_{\omega}^{XX,(p+1)} &= f_{\zeta,\mathfrak{R}}^{XX,(p+1)} + \mathbf{i} f_{\zeta,\mathfrak{I}}^{XX,(p+1)} \\
f_{\omega}^{B,(p+1)} &= f_{\zeta,\mathfrak{R}}^{B,(p+1)} + \mathbf{i} f_{\zeta,\mathfrak{I}}^{B,(p+1)}.
\end{align*}
 Finally, there exists a constant $C$ that does not depend on $\nu,\omega, \alpha$ and such that
$\hs \F_{\omega}^{XX,(j)} \hs_1$, $\hs \F_{\omega}^{B,(j)} \hs_1$,  $\hs \G_{p,\nu,\omega}\hs_{1}$, $\hs \G_{p,\nu,\omega}^B\hs_{1}$, $\hs \F_{\omega}^{\epsilon}\hs_1$  are uniformly bounded by $C$.
Here $\F_{\omega}^{XX,(j)}$, $\F_{\omega}^{B,(j)}$,  $\G_{p,\nu,\omega}$, $\G_{p,\nu,\omega}^B$ and $\F_{\omega}^{\epsilon}$ are the operators induced by the kernels $f_{\omega}^{XX,(j)}, f_{\omega}^{B,(j)},  g_{p,\nu,\omega}, g_{p,\nu,\omega}^B$ and $f_{\omega}^{\epsilon}$.
\end{lemma}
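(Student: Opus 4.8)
The plan is to read the two claimed expansions as instances of Taylor's theorem with integral remainder, applied to the real and imaginary parts of the spectral kernels regarded as functions of the frequency argument, and then to extract the uniform Schatten-$1$ bounds directly from the Brillinger-type summability conditions (B3)--(B4). To this end I would first record the Fourier representations of the induced operators,
\[
\F^{XX}_{\nu}=\frac{1}{2\pi}\sum_t e^{-\mathbf{i}t\nu}\mathscr{R}_t,\qquad \F^{B}_{\nu}=\sum_t e^{-\mathbf{i}t\nu}\B_t,
\]
and form the candidate derivative operators by differentiating $j$ times under the summation,
\[
\F^{XX,(j)}_{\nu}=\frac{1}{2\pi}\sum_t(-\mathbf{i}t)^j e^{-\mathbf{i}t\nu}\mathscr{R}_t,\qquad \F^{B,(j)}_{\nu}=\sum_t(-\mathbf{i}t)^j e^{-\mathbf{i}t\nu}\B_t.
\]
To legitimise differentiating under the sum up to order $p+1$, I would invoke (B3): the bounds $\sum_t|t|^{p+2}\|r^X_t\|_\infty<\infty$ and $\sum_t|t|^{p+2}\|b_t\|_\infty<\infty$ force each differentiated kernel series to converge uniformly in $(\tau,\sigma)$ and $\nu$ via the Weierstrass $M$-test, so that the termwise derivatives are genuinely the derivatives of continuous kernels for every $j\le p+1$.

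Second, since the integral form of Taylor's theorem is a real-variable statement, I would split each kernel into its real and imaginary parts, $f^{XX}_\nu=f^{XX}_{\nu,\mathfrak{R}}+\mathbf{i}f^{XX}_{\nu,\mathfrak{I}}$ and likewise for $f^B_\nu$, apply the one-dimensional Taylor formula in $\nu$ about $\omega$ to each part for fixed $(\tau,\sigma)$, and recombine. This produces exactly the stated polynomial part $\sum_j\tfrac{(\nu-\omega)^j}{j!}f^{XX,(j)}_\omega$ together with the integral remainder written as a real integral plus $\mathbf{i}$ times an imaginary integral; defining $g_{p,\nu,\omega}$ and $g^B_{p,\nu,\omega}$ by dividing those remainders by $(\nu-\omega)^p$ recovers the displayed identities.

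Finally, the uniform nuclear-norm bounds follow from (B4). Since $|e^{-\mathbf{i}t\nu}|=1$, the triangle inequality for $\hs\cdot\hs_1$ gives, for every $\omega$ and every $j\le p+1$,
\[
\hs\F^{XX,(j)}_\omega\hs_1\le\frac{1}{2\pi}\sum_t|t|^{j}\hs\mathscr{R}_t\hs_1\le\frac{1}{2\pi}\sum_t|t|^{p+5}\hs\mathscr{R}_t\hs_1<\infty,
\]
and analogously $\hs\F^{B,(j)}_\omega\hs_1\le\sum_t|t|^{p+5}\hs\B_t\hs_1<\infty$, both independent of $\omega$. For the remainder operators I would bound the integrand by its supremum over the segment between $\omega$ and $\nu$ and use $\int_\omega^\nu\tfrac{(\nu-\zeta)^p}{p!}\,d\zeta=\tfrac{(\nu-\omega)^{p+1}}{(p+1)!}$, so that after dividing by $(\nu-\omega)^p$ one obtains $\hs\G_{p,\nu,\omega}\hs_1\le\tfrac{2\pi}{(p+1)!}\sup_\zeta\hs\F^{XX,(p+1)}_\zeta\hs_1$, which is uniformly finite, and the same computation controls $\G^B_{p,\nu,\omega}$. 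The operator $\F^\epsilon_\omega$ is constant in $\omega$, since the $\{\epsilon_t\}$ are i.i.d. and hence $\F^\epsilon_\omega=\tfrac{1}{2\pi}\mathscr{R}^\epsilon_0$, with $\hs\F^\epsilon_\omega\hs_1=\tfrac{1}{2\pi}\hs\mathscr{R}^\epsilon_0\hs_1<\infty$ by (A2). Taking $C$ to be the maximum of these finitely many bounds completes the argument.

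I expect the main obstacle to lie in the rigorous justification of the term-by-term differentiation up to order $p+1$ and of the uniform control of the order-$(p+1)$ remainder derivative, since these are precisely the points where the higher-order summability of (B3)--(B4) is indispensable; the real/imaginary split is needed exactly because the integral form of the remainder is a real-variable result, while the bound $|e^{-\mathbf{i}t\nu}|=1$ is what makes every estimate uniform over $\omega$.
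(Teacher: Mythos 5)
Your proposal is correct, and its skeleton coincides with the paper's own proof: represent the kernels as Fourier series in $t$, use (B3) to justify differentiating term by term up to order $p+1$, split into real and imaginary parts so that the scalar Taylor theorem with integral remainder applies for fixed $(\tau,\sigma)$, recombine, and extract the uniform Schatten-1 bounds from the summability hypotheses via the triangle inequality $\hs\F_{\omega}^{XX,(j)}\hs_1\leq\frac{1}{2\pi}\sum_{t}|t|^{j}\hs\mathscr{R}_t\hs_1$. The one step where you genuinely diverge is the nuclear bound on the remainder operators $\G_{p,\nu,\omega}$, $\G_{p,\nu,\omega}^B$, which is in fact the bulk of the paper's argument. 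The paper rewrites each summand as $(-1)^{(p+1)/2}\frac{\cos(-t\zeta)}{t^4}\{(-t)^{p+5}r_t^X\}$ so as to invoke (B4) with the exponent $p+5$ exactly as stated, tests the resulting operator against an arbitrary complete orthonormal system $\{e_n\}$, shows $\sum_n\big|\langle\,\cdot\,e_n,e_n\rangle\big|=O(1)|\nu-\omega|^p$ uniformly, argues compactness as a limit of compact operators, and then appeals to the proof of Theorem 1.27 of \citet{Zhu07} to upgrade this to a nuclear-norm bound. You instead bound the remainder directly, via the triangle inequality for the $\zeta$-integral in the Banach space of trace-class operators, using $\sup_{\zeta}\hs\F^{XX,(p+1)}_{\zeta,\mathfrak{R}}\hs_1\leq\sum_t|t|^{p+1}\hs\mathscr{R}_t\hs_1<\infty$ together with $\int_{\omega}^{\nu}\frac{(\nu-\zeta)^p}{p!}d\zeta=\frac{(\nu-\omega)^{p+1}}{(p+1)!}$; this is legitimate because the integrand is a uniformly (absolutely) convergent series of continuous trace-class-valued functions, so one may equivalently interchange $\sum_t$ and $\int_{\omega}^{\nu}$ and apply the triangle inequality termwise. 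Your route is more elementary, avoids the detour through Zhu's characterization, and incidentally reveals that only summability with weight $|t|^{p+1}$ is needed from (B4) for this lemma (the full strength $|t|^{p+5}$ being required elsewhere, e.g.\ in the $\rho_{T,k}$ bounds of Lemma~\ref{SpecDenX}). Your observation that $\F^{\epsilon}_{\omega}=\frac{1}{2\pi}\mathscr{R}_0^{\epsilon}$ is constant in $\omega$, hence trivially uniformly nuclear by (A2), is likewise a cleaner disposal of that term than the paper's appeal to ``the same method of proof.''
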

\begin{proof}[Proof]
Recall that
\begin{align*}
f_{\omega}^{XX}(\tau,\sigma) = \sum_{t\in \mathbb{Z} } e^{-\mathbf{i} t\omega} r_t^X(\tau,\sigma).
\end{align*}
Since $f_{\omega}(\tau,\sigma)$ is a complex-valued function, $f_{\omega}^{XX}(\tau,\sigma) = f_{\omega, \mathfrak{R}}^{XX}(\tau,\sigma) + \mathbf{i} f_{\omega, \mathfrak{I}}^{XX}(\tau,\sigma)$.
Using a Taylor expansion of the functions $f_{\omega, \mathfrak{R}}^{XX}(\tau,\sigma)$  and  $f_{\omega, \mathfrak{I}}^{XX}(\tau,\sigma)$, we have
\begin{align*}
f_{\nu, \mathfrak{R}}^{XX}(\tau,\sigma) &=  f_{\omega, \mathfrak{R}}^{XX}(\tau,\sigma)  + \sum_{j=1}^{p-1} \frac{(\nu-\omega)^j}{j!}f_{\omega,\mathfrak{R}}^{XX,(j)} + \int_{\omega}^{\nu} \frac{(\nu - \zeta)^p}{p!}f_{\zeta,\mathfrak{R}}^{XX,(p+1)} d\zeta \\
f_{\nu, \mathfrak{I}}^{XX}(\tau,\sigma) &= f_{\omega, \mathfrak{I}}^{XX}(\tau,\sigma) + \sum_{j=1}^{p-1} \frac{(\nu-\omega)^j}{j!} f_{\omega,\mathfrak{I}}^{XX,(j)} +  \int_{\omega}^{\nu} \frac{(\nu - \zeta)^p}{p!}f_{\zeta,\mathfrak{I}}^{XX,(p+1)} d\zeta.
\end{align*}
Then
\begin{align*}
f_{\nu}^{XX} &= f_{\omega}^{XX} + \sum_{j=1}^{p-1} \frac{(\nu-\omega)^j}{j!}f_{\omega}^{XX,(j)}  +  \int_{\omega}^{\nu} \frac{(\nu - \zeta)^p}{p!}f_{\zeta,\mathfrak{R}}^{XX,(p+1)} d\zeta + \mathbf{i} \int_{\omega}^{\nu} \frac{(\nu - \zeta)^p}{p!}f_{\zeta,\mathfrak{I}}^{XX,(p+1)} d\zeta.
\end{align*}
 Under condition (B3),
\begin{align*}
f^{XX,(j)}_{\omega}(\tau,\sigma) &= \sum_{t \in \mathbb{Z} \backslash\{0\}} e^{-\mathbf{i} t\omega}  t^j r_t^X(\tau,\sigma)\\
f_{\zeta,\mathfrak{R}}^{XX,(p+1)}(\tau,\sigma) &= \begin{cases} \sum_{t \in \mathbb{Z} \backslash \{0\}} (-1)^{(p+1)/2} \cos(- t\zeta) (-t)^{p+1} r_t^X(\tau,\sigma) \qquad & \text{for $p$ odd }\\
 \sum_{t \in \mathbb{Z}\backslash\{0\}} (-1)^{(p+2)/2} \sin(- t\zeta) (-t)^{p+1} r_t^X(\tau,\sigma) \qquad &\text{for $p$ even}. 
 \end{cases}
\end{align*}
 Under condition (B3), for $p$ odd,
\begin{align*}
 \int_{\omega}^{\nu}   \frac{(\nu - \zeta)^p}{p!}f_{\zeta,\mathfrak{R}}^{XX,(p+1)}(\tau,\sigma) d\zeta &= \int_{\omega}^{\nu}  \sum_{t \in \mathbb{Z} \backslash \{0\}}  \frac{(\nu - \zeta)^p}{p!} (-1)^{(p+1)/2}\cos(- t\zeta) (-t)^{p+1} r_t^X(\tau,\sigma) d\zeta \\
&=  \int_{\omega}^{\nu}  \sum_{t \in \mathbb{Z} \backslash \{0 \}} (-1)^{(p+1)/2} \frac{(\nu - \zeta)^p}{p!} \frac{\cos(- t\zeta)}{t^4} \Big\{(-t)^{p+5} r_t^X(\tau,\sigma)\Big\} d\zeta \\
&= \sum_{t \in \mathbb{Z}\backslash \{0\}}  \int_{\omega}^{\nu} (-1)^{(p+1)/2}  \frac{(\nu - \zeta)^p}{p!} \frac{\cos(- t\zeta)}{t^4} d\zeta  \times  \Big\{(-t)^{p+5} r_t^X(\tau,\sigma)\Big\}.
\end{align*}
Under condition (B4), it follows that 
$$\hs  \F^{XX,(j)}_{\omega} \hs_1  \leq \sum_{t\in \mathbb{Z} \backslash \{0\}} |t|^j \hs \mathscr{R}_t^X \hs_1$$

 Given a system of  complete orthonormal functions  $\{e_n\}$
\begin{align*}
\sum_{n\in \mathbb{N} }  \left| \int_{[0,1]^2} \int_{\omega}^{\nu} \frac{(\nu - \zeta)^p}{p!}f_{\zeta,\mathfrak{R}}^{XX,(p+1)}(\tau,\sigma)  d\zeta \times e_n(\tau) \overline{e_n(\sigma)} d\tau d\sigma \right| = \\
 = \sum_{n\in \mathbb{N}} \left| \sum_{t\in \mathbb{Z}\backslash \{0\}}\int_{\omega}^{\nu} \frac{(\nu - \zeta)^p}{p!} \frac{\cos(-t\zeta)}{t^4}  d\zeta \times \Big\{ t^{p+5}  \int_{[0,1]^2} r_t^X(\tau,\sigma) e_n(\tau) \overline{e_n(\sigma)} d\tau d\sigma \Big\}   \right|.
\end{align*}
Denoting   $\int_{[0,1]^2} r_t^X(\tau,\sigma) e_n(\tau) \overline{e_n(\sigma)} d\tau d\sigma  = r_{t,n}^X$, the above term is bounded above by
\begin{align*}
 \sum_{n\in \mathbb{N}} \sum_{t \in \mathbb{Z}} |t|^{p+5} \times  |r_{t,n} | \int_{\omega}^{\nu} \frac{|\nu - \zeta|^p}{p!} d\zeta = O(1) \times |\nu - \omega|^p \sum_{t\in \mathbb{Z}} | t|^{p+5} \hs \mathscr{R}_t \hs_1 = O(1) |\nu -\omega|^p.
\end{align*}
Since the induced operator of $\int_{\omega}^{\nu} \frac{(\nu - \zeta)^p}{p!} f_{\zeta,\mathfrak{R}}^{XX,(p+1)}(\tau,\sigma) d\zeta$ is the limit of a sequence of compact operators, it is a compact operator.

Following the proof of Theorem 1.27 in \citet{Zhu07}, it follows that the induced operator of $\int_{\omega}^{\nu} \frac{(\nu - \zeta)^p}{p!}f_{\zeta,\mathfrak{R}}^{XX,(p+1)} d\zeta$ has nucelar norm of order $|\nu - \omega|^p$. Similar steps yield the same result for $p$ even.
We obtain the same result for the induced operator of $\int_{\omega}^{\nu} \frac{(\nu - \zeta)^p}{p!}f_{\zeta,\mathfrak{I}}^{XX,(p+1)} d\zeta$.  Then 
$\hs \G_{p,\nu,\omega} \hs_{1}$ is uniformly bounded.
The same method of proof is applied to $ \F_{\omega}^{B,(j)}$, $ \G_{\nu,\omega}^B$ and $\F_{\omega}^{\epsilon}$.
\end{proof}

\begin{lemma} \label{Slambdaj}
Let $\alpha$ and $\beta$ be two positive numbers as in  assumption (B1). Let  $\lambda_j =  j^{-\alpha}, b_j=  j^{-\beta}; \zeta_T = T^{-\alpha/(\alpha + 2\beta)}$ then
\begin{align}
&(A) \qquad \sum_{j=1}^{\infty} \zeta_T^2 \frac{b_j^2}{(\lambda_j + \zeta_T)^2} = O\Big(T^{-(2\beta -1)/(\alpha + 2\beta)} \Big) \label{seqA}\\
&(B) \qquad \sum_{j=1}^{\infty} \frac{1}{T} \frac{\lambda_j}{(\lambda_j + \zeta_T)^2} = O\Big(T^{-(2\beta -1)/(\alpha + 2\beta)} \Big) \label{seqB}\\
&(C) \qquad \sum_{j=1} ^{\infty} \frac{\lambda_j^2}{(\lambda_j + \zeta_T)^2} = O\Big( T^{1/(\alpha + 2\beta)}\Big) \label{seqC}.
\end{align}
\end{lemma}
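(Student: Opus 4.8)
The plan is to reduce all three sums to elementary power-law sums by localizing around the index at which the eigenvalue $\lambda_j$ crosses the regularization level $\zeta_T$. Concretely, I would set the transition index
\begin{align*}
N_T := \zeta_T^{-1/\alpha} \asymp T^{1/(\alpha + 2\beta)},
\end{align*}
so that $\lambda_j = j^{-\alpha} \geq \zeta_T$ exactly when $j \leq N_T$. Splitting each sum at $j = N_T$, on the \emph{head} $\{j \leq N_T\}$ one has $\lambda_j + \zeta_T \asymp \lambda_j$, while on the \emph{tail} $\{j > N_T\}$ one has $\lambda_j + \zeta_T \asymp \zeta_T$. Each piece then becomes a sum of the form $\sum j^{c}$, which I would control by the integral comparison $\sum_{j \leq N} j^{c} \asymp N^{c+1}$ when $c > -1$ and $\sum_{j > N} j^{c} \asymp N^{c+1}$ when $c < -1$. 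The remainder is bookkeeping of exponents, verifying that head and tail each yield the same power of $T$.

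I would begin with the two cleaner parts. For \eqref{seqC}, the head contributes $\sum_{j \leq N_T} \lambda_j^2/(\lambda_j + \zeta_T)^2 \asymp \sum_{j \leq N_T} 1 \asymp N_T = T^{1/(\alpha+2\beta)}$, while the tail contributes $\asymp \zeta_T^{-2}\sum_{j > N_T} j^{-2\alpha} \asymp \zeta_T^{-2} N_T^{1-2\alpha} = T^{1/(\alpha+2\beta)}$, the tail summation converging because $2\alpha > 1$. For \eqref{seqB}, the head gives $\tfrac{1}{T}\sum_{j \leq N_T} \lambda_j^{-1} = \tfrac{1}{T}\sum_{j \leq N_T} j^{\alpha} \asymp \tfrac{1}{T} N_T^{\alpha+1}$ and the tail gives $\tfrac{1}{T}\zeta_T^{-2}\sum_{j > N_T} j^{-\alpha} \asymp \tfrac{1}{T}\zeta_T^{-2} N_T^{1-\alpha}$, where the hypothesis $\alpha > 1$ is exactly what makes the tail power sum convergent; substituting $N_T$ and $\zeta_T$ shows both equal $T^{-(2\beta-1)/(\alpha + 2\beta)}$.

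Part \eqref{seqA} is the delicate one and I expect it to be the main obstacle, since it couples the source decay $b_j = j^{-\beta}$ with the regularization. Here the tail is immediate: $\sum_{j > N_T} \zeta_T^2 b_j^2/(\lambda_j+\zeta_T)^2 \asymp \sum_{j > N_T} j^{-2\beta} \asymp N_T^{1-2\beta} = T^{-(2\beta-1)/(\alpha+2\beta)}$, using $2\beta > 1$. The head reduces to $\zeta_T^2 \sum_{j \leq N_T} j^{2\alpha - 2\beta}$, and the interplay of $\alpha$ and $\beta$ from (B1) is what guarantees this is again $\asymp \zeta_T^2 N_T^{2\alpha - 2\beta + 1} = T^{-(2\beta - 1)/(\alpha + 2\beta)}$, matching the tail. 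A slicker route that avoids case analysis on the exponent $2\alpha - 2\beta$ is the interpolation inequality $\zeta_T/(\lambda_j + \zeta_T) \leq (\zeta_T/\lambda_j)^{s}$, valid for every $s \in [0,1]$, which yields $\sum_j \zeta_T^2 b_j^2/(\lambda_j+\zeta_T)^2 \leq \zeta_T^{2s}\sum_j j^{2\alpha s - 2\beta}$; choosing $s$ just below $(2\beta-1)/(2\alpha)$ makes the sum converge and produces the bound $\zeta_T^{2s} \asymp T^{-(2\beta-1)/(\alpha+2\beta)}$. The care point in either approach is checking that the admissible range of exponents (equivalently $s \leq 1$) is compatible with the regularity constraints on $\alpha$ and $\beta$, so that the head never dominates the claimed rate.
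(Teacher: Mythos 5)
Your primary argument --- splitting each series at the transition index $N_T=\zeta_T^{-1/\alpha}\asymp T^{1/(\alpha+2\beta)}$, using $\lambda_j+\zeta_T\asymp\lambda_j$ on the head and $\lambda_j+\zeta_T\asymp\zeta_T$ on the tail, and finishing with the power-sum asymptotics $\sum_{j\le N}j^{c}\asymp N^{c+1}$ (for $c>-1$) and $\sum_{j>N}j^{-c}\asymp N^{1-c}$ (for $c>1$) --- is precisely the paper's proof: its split point $J=T^{1/(\alpha+2\beta)}$ is your $N_T$, and its head/tail bounds and exponent bookkeeping for parts (A), (B) and (C) coincide with yours. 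That portion of your proposal is correct and needs no further comment.

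However, the ``slicker route'' you offer for part (A) does not work as stated. With the interpolation inequality $\zeta_T/(\lambda_j+\zeta_T)\le(\zeta_T/\lambda_j)^{s}$, any \emph{fixed} admissible $s<(2\beta-1)/(2\alpha)$ yields the bound $C_s\,\zeta_T^{2s}=C_s\,T^{-2\alpha s/(\alpha+2\beta)}$, which is polynomially \emph{larger} than $T^{-(2\beta-1)/(\alpha+2\beta)}$; your claim $\zeta_T^{2s}\asymp T^{-(2\beta-1)/(\alpha+2\beta)}$ would require the critical choice $s=(2\beta-1)/(2\alpha)$, at which the series $\sum_j j^{2\alpha s-2\beta}=\sum_j j^{-1}$ diverges. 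Nor can you let $s$ creep up to the critical value, since $C_s$ blows up like the reciprocal of the gap; even optimizing $s$ as a function of $T$ leaves a spurious $\log T$ factor. So interpolation cannot replace the split; the direct head/tail computation is the one to keep.

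Separately, your attribution of the head control in (A) to ``the interplay of $\alpha$ and $\beta$ from (B1)'' papers over a real condition. The step $\sum_{j\le N_T}j^{2\alpha-2\beta}\asymp N_T^{2\alpha-2\beta+1}$ requires $2\beta-2\alpha<1$, i.e.\ $\beta<\alpha+1/2$, whereas (B1) as stated gives only the reverse-direction constraint $\alpha<\beta+1/2$. The condition is genuinely needed for (A) to hold at all: if $2\alpha<2\beta-1$, the $j=1$ term alone is $\asymp\zeta_T^2=T^{-2\alpha/(\alpha+2\beta)}$, which already exceeds the claimed rate. The paper's own proof is in exactly the same position --- it simply asserts ``$2\beta-2\alpha<1$'' without deriving it from (B1) --- so this is a shared imprecision rather than a defect of your argument relative to the paper's; but you should flag it as an additional assumption rather than claim it follows from (B1).
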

\begin{proof} [Proof]
We use the following facts in the proof
\begin{align*}
\sum_{i=1}^{T} i^{t} &\asymp \begin{cases}
T^{t+1} \qquad &t > -1\\
\log n \qquad &t = -1\\
C \qquad &t < -1
\end{cases}\\
\sum_{i=T+1}^{\infty}i^{-t} &\asymp \quad T^{-t+1} \qquad t > 1.
\end{align*}
(A)  Let $J = T^{1/(\alpha + 2\beta)}$. Since $2\beta > 1$ and $2\beta - 2\alpha < 1$, using the above results we may write
\begin{align*}
 \sum_{j=1}^{\infty} \zeta_T^2 \frac{\lambda_j^2}{(\lambda_j + \zeta_T)^2} &\asymp \sum_{j=1}^{\infty} \zeta_T^2 \frac{j^{-2\beta}}{(j^{-\alpha} + \zeta_T)^2} \leq \sum_{j \leq J} j^{-2\beta + 2\alpha} \times \zeta_T^2 + \sum_{j > J} j^{-2\beta} \asymp J^{-2\beta + 2\alpha + 1} \zeta_T^2  + J^{-2\beta + 1} \\
&\asymp T^{(-2\beta + 2\alpha + 1)/(\alpha + 2\beta)} T^{-2\alpha/(\alpha + 2\beta)} + T^{-(2\beta -1)/(\alpha + 2\beta)}\\
&= O\Big(T^{-(2\beta -1)/(\alpha + 2\beta)} \Big) .
\end{align*}
(B) For $\alpha > 1$, 
\begin{align*}
\sum_{j=1}^{\infty} \frac{1}{T} \frac{\lambda_j}{(\lambda_j + \zeta_T)^2} &= \sum_{j \leq J} \frac{1}{T} \frac{\lambda_j}{(\lambda_j + \zeta_T)^2} + \sum_{j > J} \frac{1}{T} \frac{\lambda_j}{(\lambda_j + \zeta_T)^2} \leq \sum_{j \leq J} \frac{1}{T} \frac{1}{\lambda_j} + \sum_{j > J} \frac{1}{T} \frac{\lambda_j}{\zeta_T^2} \\
& \asymp \frac{1}{T}\sum_{j \leq J} j^{\alpha}  + \frac{1}{T} T^{-2\alpha/(\alpha + 2\beta)} \sum_{j > J} j^{-\alpha} \asymp \frac{1}{T} J^{\alpha + 1} + T^{-(2\beta - \alpha)(\alpha + 2\beta)} J^{-\alpha + 1}\\
&= T^{-1} T^{-(\alpha+1)/(\alpha + 2\beta)} + T^{-(2\beta - \alpha)/(\alpha + 2\beta)} T^{-(\alpha -1)/(\alpha + 2\beta)}\\
&= O\Big(T^{-(2\beta -1)/(\alpha + 2\beta)} \Big).
\end{align*}
(C) For $J = T^{1/(\alpha + 2\beta)}$
\begin{align*}
\sum_{j=1} ^{\infty} \frac{\lambda_j^2}{(\lambda_j + \zeta_T)^2} &= \sum_{j \leq J } \frac{\lambda_j^2}{(\lambda_j + \zeta_T)^2} + \sum_{j > J} \frac{\lambda_j^2}{(\lambda_j + \zeta_T)^2} \leq J + \sum_{j > J} \zeta_T^{-2}  j^{-2\alpha}  = T^{1/(\alpha + 2\beta)} + T^{2\alpha/(\alpha + 2\beta)} J^{-2\alpha + 1} \\
&\asymp T^{1/(\alpha + 2\beta)} + T^{2\alpha/(\alpha + 2\beta)} T^{(-2\alpha + 1)/(\alpha + 2\beta)}\\
&\asymp T^{1/(\alpha + 2\beta)}.
\end{align*}
\end{proof}
\begin{lemma}\label{SumW}
For a fixed $\omega \in [0,2\pi]$ and a non-negative integer $k$,
\begin{align*}
&(A) \qquad \frac{1}{T} \sum_{s=0}^{T-1} W^{(T)}(\omega -u_s) \times (\omega -u_s)^j =  \delta_{0j} + O\big(T^{-1} B_T^{-1} \big) \qquad {\rm for} \quad 0 \leq  j < p \\
&(B) \qquad \frac{1}{T} \sum_{s=0}^{T-1} \Big|W^{(T)}(\omega -u_s) \Big| \times \Big|\omega - u_s \Big|^{p} = O\big(B_T^p \big)  +  O\big(T^{-1} B_T^{-1} \big). \\
&(C) \qquad \frac{1}{T} \sum_{s=0}^{T-1} \Big\{W^{(T)}(\omega -u_s) \Big\}^2 \times (\omega -u_s)^k = \begin{cases} C_k B_T^{k-1}+ O\big( T^{-1}B_T^{-2}\big)  \qquad &k \quad {\rm even}\\
O\big(T^{-1} B_T^{-2} \big) \qquad &k \quad {\rm odd}\\
\end{cases} \\
& (D) \qquad \frac{1}{T} \sum_{s=0}^{T-1}\Big| W^{(T)}(\omega -u_s) W^{(T)}(\omega + u_s) \times (\omega - u_s)^j\Big| = 1_{I_T}(\omega) O\big(B_T^{j-1} \big),
\end{align*}
where $\delta_{0j}$ is Kronecker symbol ,  $0\leq C_k \leq \int_{\mathbb{R}} W(\alpha)^2 \alpha^k d\alpha$ and $I_T = \big[0,B_T\big] \cup \big[\pi - B_T, \pi + B_T\big] \cup \big[2\pi - B_T, 2\pi\big]$.
\end{lemma}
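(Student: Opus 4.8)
The plan is to treat each of the four sums as a Riemann sum over the circle $[0,2\pi)$ attached to the equispaced nodes $\nu_s=2\pi s/T$, and to separate (i) the value of the limiting integral from (ii) the discretisation error. The starting point is that $W^{(T)}$ is $2\pi$-periodic and $W^{(T)}(\omega-\nu_s)=W^{(T)}(\omega-u_s)$, so each summand is the value at $\nu_s$ of a fixed $2\pi$-periodic function; writing $x=\omega-u_s\in[-\pi,\pi]$, the integrands of interest are $W^{(T)}(x)x^{j}$ for (A), $W^{(T)}(x)|x|^{p}$ for (B), $\{W^{(T)}(x)\}^{2}x^{k}$ for (C), and $W^{(T)}(x)\,W^{(T)}(x+2\omega)\,x^{j}$ for (D), where in the last case I use $\omega+u_s\equiv (\omega-u_s)+2\omega\pmod{2\pi}$.

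First I would record a single discretisation estimate and reuse it everywhere. Because $W$ is of bounded variation and supported in $[-1,1]$, for $B_T<\pi$ only the $k=0$ translate in the periodisation survives on $[-\pi,\pi]$, so $W^{(T)}(x)=B_T^{-1}W(x/B_T)$ there; its total variation over a period is $O(B_T^{-1})$, that of $\{W^{(T)}\}^{2}$ is $O(B_T^{-2})$, and restricting the polynomial factor $x^{j}$ to the support $|x|\le B_T$ contributes an extra $O(B_T^{j})$. The classical bound for an equispaced Riemann sum of a function $h$ of bounded variation, $\big|\tfrac1T\sum_{s}h(\nu_s)-\tfrac{1}{2\pi}\int_0^{2\pi}h\big|\le C\,T^{-1}\,\mathrm{Var}(h)$, then furnishes the error terms $O(T^{-1}B_T^{-1})$ in (A) and (B) and $O(T^{-1}B_T^{-2})$ in (C).

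It remains to evaluate the limiting integrals by the substitution $u=x/B_T$. For (A) the integral is proportional to $B_T^{j}\int_{-1}^{1}W(u)u^{j}\,du$, which gives the Kronecker term for $j=0$ and vanishes for $1\le j\le p-1$ by the vanishing-moment conditions in (B5); for (B) the same substitution yields $O(B_T^{p})$; and for (C) the integral is proportional to $B_T^{k-1}\int_{-1}^{1}W(u)^{2}u^{k}\,du$, which is $C_kB_T^{k-1}$ with $0\le C_k\le\int W(\alpha)^2\alpha^k\,d\alpha$ for $k$ even and vanishes for $k$ odd since $W$ is even (so $W^{2}u^{k}$ is odd). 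Adding the leading integral to the discretisation error gives (A)--(C).

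The genuinely new ingredient is (D), which I expect to be the main obstacle. Here the product $W^{(T)}(\omega-u_s)W^{(T)}(\omega+u_s)$ can be nonzero only when \emph{both} arguments lie within $B_T$ of an integer multiple of $2\pi$; since $|\omega-u_s|\le B_T$ forces $u_s\approx\omega$ and hence $\omega+u_s\approx 2\omega$, this compels $2\omega\equiv0\pmod{2\pi}$ up to $O(B_T)$, i.e. $\omega\in I_T=[0,B_T]\cup[\pi-B_T,\pi+B_T]\cup[2\pi-B_T,2\pi]$, which is exactly the source of the factor $1_{I_T}(\omega)$. On $I_T$ the two kernels overlap over the same $O(B_T)$ window, the integrand is of the same type as in (C), and the (C)-estimate returns the bound $1_{I_T}(\omega)\,O(B_T^{j-1})$. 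The delicate point throughout --- and sharpest for (D) --- is to keep the total-variation bounds for the polynomial-weighted (squared) kernels uniform in $\omega$ and to account correctly for the periodisation near the endpoints $0,\pi,2\pi$, so that the discretisation error is always dominated by, or absorbed into, the stated leading order.
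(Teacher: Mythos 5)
Your proposal is correct and takes essentially the same route as the paper's own proof: replace each normalised sum by the corresponding integral with a discretisation error of order $T^{-1}$ times the total variation of the (polynomially weighted, possibly squared) periodised kernel, i.e.\ $O(T^{-1}B_T^{-1})$ in (A)--(B) and $O(T^{-1}B_T^{-2})$ in (C)--(D), and then evaluate the limiting integrals by the substitution $\alpha = B_T u$, invoking the vanishing-moment condition (B5) for (A), the evenness of $W$ for odd $k$ in (C), and the support-overlap argument (both arguments of $W^{(T)}$ within $B_T$ of a multiple of $2\pi$, forcing $\omega$ within $B_T$ of a multiple of $\pi$) to produce the factor $1_{I_T}(\omega)$ and the exact vanishing of every summand outside $I_T$ in (D). The only cosmetic difference is that you appeal to the generic bounded-variation Riemann-sum bound and to the analogy with (C), where the paper cites Lemma 7.12 of Panaretos and Tavakoli (2013) and carries out the translate-by-translate computation explicitly; the substance is identical.
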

\begin{proof} [Proof]
The proof uses the results of \cite{panaretos2013fourier} on the total variation $V_a^b(h)$ of a function $h:[a,b] \rightarrow \mathbb{C}$.  We first  have the following results. For any positive integers  $\ell$ and $k$ and $x \in [-\pi  \leq \pi]$,
\begin{align*}
V_{0}^{2\pi}\Big( \big\{W^{(T)}\big\}^{\ell} x^k  \Big) \leq 4 \big\|W^{(T)}\big\|_{\infty} \times (2\pi)^k \times V_0^{2\pi}\Big(\big\{W^{(T)}\big\}^{\ell} \Big) = O\big(B_T^{-\ell} \big) .
\end{align*}
In the rest of the proof of this lemma, we frequently use Lemma 7.12 in \cite{panaretos2013fourier} to get an upper bound for the difference between an integral and its linear approximation based on grid of points in that interval.

\noindent $(A)$ We have
\begin{align*}
\frac{1}{T} \sum_{s=0}^{T-1} W^{(T)}(\omega -u_s) \times (\omega -u_s)^j 
&= \int_{-\pi}^{\pi} W^{(T)}(\alpha) \alpha^j d\alpha + O\Big(T^{-1}B_T^{-1} \Big)\\
& = \delta_{0j} + O\Big(T^{-1}B_T^{-1} \Big).
\end{align*}
In the second line, we replaced $\alpha = \omega - v$, then $\alpha$ from $-\pi$ to $\pi$. The integral equals zero,  by the properties of $W$. This proves part $(A)$. 

\noindent $(B)$ For this part,
\begin{align*}
\frac{1}{T} \sum_{s=0}^{T-1} \Big|W^{(T)}(\omega -u_s) \Big| \times \Big|\omega - u_s \Big|^{p} 
&= \int_{-\pi}^{\pi} \Big|W^{(T)}(\alpha) \Big| \times \big|\alpha \big|^{p} d\alpha + O\big(T^{-1} B_T^{-1} \big)
\leq \int_{\mathbb{R}} \Big| W(\alpha)\Big| \times \big|\alpha \big|^p d\alpha \times B_T^{p}  +  O\big(T^{-1} B_T^{-1} \big)\\
&= O\big(B_T^p \big)  +  O\big(T^{-1} B_T^{-1} \big).
\end{align*}
\noindent $(C)$ We have
\begin{align*}
\frac{1}{T} \sum_{s=0}^{T-1} \Big\{W^{(T)}(\omega -u_s)\Big\}^2 \times (\omega -u_s)^k  = \int_{-\pi}^{\pi} \Big\{W^{(T)}(\alpha)\Big\}^2  \alpha^k d\alpha  + \frac{1}{T} O\left( V_{0}^{2\pi}\Big( \big\{W^{(T)}\big\}^2 x^k  \Big)  \right).
\end{align*}
The second term is bounded by  $O\big(T^{-1}B_T^{-2}\big)$, for odd $k$, while the first term vanishes.  We now consider the integral term for even $k$.
Recall that  $$W^{(T)}(\alpha) = \frac{1}{B_T}\sum_{i\in \mathbb{Z}}   W\big(\frac{\alpha + 2\pi i}{B_T} \big),$$ 
and so
\begin{align*}
\big\{W^{(T)}(\alpha) \big\}^2= \Big\{\frac{1}{B_T}\sum_i W\Big(\frac{\alpha + 2\pi i }{B_T} \Big) \Big\} \times \Big\{ \frac{1}{B_T} \sum_j W\Big(\frac{\alpha + 2\pi j }{B_T} \Big)\Big\} .
\end{align*}
For  $i\neq j $, $\pi > 2B_T$, and $W$ is supported on $[-1,1]$. It follows that at least one of $W\Big(\frac{\alpha + 2\pi i }{B_T} \Big)$ and $W\Big(\frac{\alpha + 2\pi j }{B_T} \Big)$ must be zero. Thus,
\begin{align*}
\big\{W^{(T)}(\alpha) \big\}^2 = \frac{1}{B_T^2} \sum_iW^2\Big(\frac{\alpha + 2\pi i}{B_T} \Big) .
\end{align*}
Moreover $\alpha \in [- \pi,\pi]$, so for $|i| \geq 1$ we have $W\Big(\frac{\alpha + 2\pi i}{B_T}\Big) =0$. Hence, for even $k$,
\begin{align*}
\int_{-\pi}^{\pi}W^{(T)}\big(\alpha \big)^2 \alpha^k d\alpha  = \frac{1}{B_T^2}\int_{-\pi}^{\pi} W\Big(\frac{\alpha}{B_T} \Big)^2 \alpha^k d\alpha \leq B_T^{k-1} \int_{\mathbb{R}}W(\alpha)^2 \alpha^k d\alpha .
\end{align*}

\noindent $(D)$ For $\omega \in I_T$ 
\begin{align*}
\frac{1}{T} \sum_{s=0}^{T-1} \Big|W^{(T)}(\omega -u_s) W^{(T)}(\omega + u_s) \times (\omega - u_s)^j  \Big|
&= \int_{-\pi}^{\pi}\Big| W^{(T)}(\alpha) W^{(T)}(2\omega-\alpha) \times \alpha^j \Big| d\alpha + O\big(T^{-1} B_T^{-2} \big).
\end{align*}
Since $W$ is supported on $[-1,1]$,  $W\Big( \frac{ 2\omega -\alpha  + 2k\pi}{B_T} \Big) \neq 0 $ iff  $|2\omega - \alpha + 2k \pi| \leq B_T$. For $T$ sufficiently large, $\alpha \in [-\pi,\pi], \omega \in [0,2\pi]$, the inequality will hold only for $-4 \leq k \leq 4 $. Thus, the integral is bounded by
\begin{align*}
\sum_{k=-4}^4 \int_{-\pi}^{\pi} \frac{1}{B_T^2} \Big|W\Big(\frac{\alpha}{B_T} \Big) \times W\Big(\frac{2\omega -\alpha  + 2k \pi}{B_T} \Big) \times \alpha^j \Big|d\alpha  &= \sum_{k=-4}^4\int_{-\pi B_T^{-1}}^{\pi B_T^{-1}} \frac{1}{B_T^2} \Big|W(x)  W\Big( \frac{2\omega - 2k \pi}{B_T} - x \Big) \times  B_T^{j+1} x^j \Big|dx \\
&= O\Big(B_T^{j-1} \Big).
\end{align*}
 When $\omega \notin I_T$, since $0 < \omega < 2\pi$, we have  $\omega \notin \cup_{k\in \mathbb{Z}}\big[k\pi - B_T, k\pi+ B_T\big]$, and so 
$\big|k\pi - \omega \big| > B_T$. It follows that  
$$\Big|\frac{\alpha}{B_T}  + \frac{2\omega -\alpha + 2k \pi }{B_T} \Big|  =  \Big|\frac{2k \pi + 2\omega}{B_T} \Big| \geq 2.$$
Then, at least one of  $W\Big(\frac{\alpha}{B_T}\Big)$ and $W\Big(\frac{2\omega - \alpha + 2k\pi }{B_T}\Big)$ must equal 0.  When $|\alpha| > B_T$, $W(\alpha/B_T)  = 0$. We deduce that for $T$ large enough and
$\omega \notin [0,B_T] \cup [\pi - B_T, \pi + B_T] \cup [2\pi - B_T, 2\pi]$
\begin{align*}
W^{(T)}(\alpha) W^{(T)}(\alpha - 2\omega)  = \frac{1}{B_T^2} W\Big(\frac{\alpha}{B_T}\Big) \sum_{k\in \mathbb{Z}} W\Big( \frac{\alpha + 2k \pi - 2\omega}{B_T}\Big)  = 0.
\end{align*}
Thus we get zero for $\omega \notin I_T$.
\end{proof}
\begin{lemma} \label{SpecDenX} Let $h_T(t) = 1_{[0,T-1]}(t)$ and  $\Delta^{(T)}(\omega) = \sum_{t=0}^{T-1} e^{-\mathbf{i} \omega t}$. Let
\begin{align*}
f_{\omega_1,\ldots,\omega_{k-1}}(\tau_1,\ldots,\tau_k) &= \frac{1}{(2\pi)^{k-1}} \sum_{t_1,\ldots,t_{k-1} = -\infty}^{\infty} \exp \left\{ -\mathbf{i} \sum_{j=1}^{k-1} \omega_j t_j\right\} {\cum} \big(X_{t_1}(\tau_1),\ldots, X_{t_{k-1}}(\tau_{k-1}), X_0(\tau_k) \big) \\
\varrho_{T,k}: = \varrho_T(\tau_1,\ldots,\tau_k) &= \frac{1}{(2\pi)^{k-1}}\sum_{t_i\geq T} \exp \left\{-\mathbf{i} \sum_{j=1}^{k-1} \omega_j t_j \right\} {\cum} \big(X_{t_1}(\tau_1),\ldots, X_{t_{k-1}}(\tau_{k-1}), X_0(\tau_k) \big)\\
\rho_{T,k}:= \rho_T(\tau_1,\ldots,\tau_{k}) &= \sum_{|t_j| \leq T-1} \exp \left\{-\sum_{j=1}^{k-1} t_j \omega_j \right\} {\cum} \big(X_{t_1}(\tau_1),\ldots, X_{t_{k-1}}(\tau_{k-1}), X_0(\tau_k) \big) \times \\
&\qquad \qquad \left[\sum_{t=0}^{T-1} \exp \Big\{-t\sum_{j=1}^k \omega_j \Big\}  \Big\{1 - h_T(t+t_1) h_T(t+t_2) \ldots h_T(t+t_{k-1}) h_T(t) \Big\}\right]
\end{align*}
 Then
\begin{align*}
{\cum} \Big(\widetilde{X}^{(T)}_{\omega_1},\ldots, \widetilde{X}^{(T)}_{\omega_k} \Big) = \frac{(2\pi)^{k/2-1}}{T^{k/2}} f_{\omega_1,\ldots,\omega_{k-1}} \times \Delta^{(T)} \Big(\sum_{j=1}^k \omega_j \Big) -  \frac{(2\pi)^{k/2-1}}{T^{k/2}} \Delta^{(T)} \Big(\sum_{j=1}^k \omega_j \Big)\times \varrho_{T,k} + \frac{1}{(2\pi T)^{k/2}}\rho_{T,k}.
\end{align*} 
Let $\F_{\omega_1,\ldots,\omega_{k-1}}, \U_{T,k}$ and  $\V_{T,k}$ be the operators induced by $f_{\omega_1,\ldots,\omega_{k-1}}, \varrho_{T,k}$ and $\rho_{T,k}$, respectively. Then
\begin{align*}
\hs \F_{\omega_1,\ldots,\omega_{k-1} }\hs_1  &\leq \frac{1}{(2\pi)^{k-1}} \sum_{t_1,\ldots,t_{k-1} = -\infty}^{\infty}  \hs \mathscr{R}_{t_1,\ldots,t_{k-1}} \hs_1 \\
\hs \U_{T,k} \hs_1 &\leq  \sum_{|t_j| \geq T} \hs \mathscr{R}_{t_1,\ldots,t_{k-1}} \hs_1 \\
\hs \V_{T,k} \hs_1 &\leq \sum_{|t_j| \leq T-1}  \hs \mathscr{R}_{t_1,\ldots,t_{k-1}} \hs_1 \times  \sum_{j=1}^{k-1} \big|t_j \big|.
\end{align*}
\end{lemma}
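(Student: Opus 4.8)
The plan is to reduce the joint cumulant of the discrete Fourier transforms to a weighted sum of cumulant kernels, and then to peel off the three advertised pieces by decomposing the truncation indicators. Writing $\widetilde{X}^{(T)}_{\omega_j}=(2\pi T)^{-1/2}\sum_{r=0}^{T-1}X_r e^{-\mathbf{i}\omega_j r}$ (the normalisation of \citet{panaretos2013fourier}, under which the constants stated in the lemma are correct) and invoking the multilinearity of joint cumulants, I would first obtain
\begin{align*}
{\cum}\Big(\widetilde{X}^{(T)}_{\omega_1},\ldots,\widetilde{X}^{(T)}_{\omega_k}\Big)=\frac{1}{(2\pi T)^{k/2}}\sum_{r_1,\ldots,r_k=0}^{T-1}e^{-\mathbf{i}\sum_{j=1}^k\omega_j r_j}\,{\cum}\big(X_{r_1},\ldots,X_{r_k}\big).
\end{align*}
By strict stationarity the inner cumulant depends only on the lags, so putting $t=r_k$ and $t_j=r_j-r_k$ for $j=1,\ldots,k-1$, and using $\sum_{j=1}^k\omega_j r_j=\sum_{j=1}^{k-1}\omega_j t_j+t\sum_{j=1}^k\omega_j$, the constraints $r_j\in[0,T-1]$ turn into the indicator weight $h_T(t)\prod_{j=1}^{k-1}h_T(t+t_j)$, while the cumulant becomes ${\cum}(X_{t_1},\ldots,X_{t_{k-1}},X_0)$.

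Next I would split the indicator product through the identity $\prod_{j}h_T(t+t_j)=1-\big(1-\prod_{j}h_T(t+t_j)\big)$. Replacing the product by $1$ and extending the lag sums to all of $\mathbb{Z}$ decouples the two sums,
\begin{align*}
\Big(\sum_{t_1,\ldots,t_{k-1}\in\mathbb{Z}}e^{-\mathbf{i}\sum_j\omega_j t_j}{\cum}(X_{t_1},\ldots,X_0)\Big)\Big(\sum_{t=0}^{T-1}e^{-\mathbf{i}t\sum_j\omega_j}\Big)=(2\pi)^{k-1}f_{\omega_1,\ldots,\omega_{k-1}}\,\Delta^{(T)}\Big(\textstyle\sum_j\omega_j\Big),
\end{align*}
which after the prefactor $(2\pi T)^{-k/2}$ gives the leading term. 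The correction $1-\prod_j h_T(t+t_j)$ I would then separate according to whether every $|t_j|\le T-1$ or some $|t_j|\ge T$: in the latter region $t+t_j\notin[0,T-1]$ for all $t\in[0,T-1]$, the product vanishes identically, the bracket is $1$, and the sum again factorises, now into $(2\pi)^{k-1}\varrho_{T,k}\,\Delta^{(T)}(\sum_j\omega_j)$; the region $|t_j|\le T-1$ reproduces precisely $\rho_{T,k}$. Matching the three contributions against the normalising constants yields the claimed identity.

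For the operator-norm estimates, the first two are direct consequences of the triangle inequality for the nuclear norm together with $|e^{-\mathbf{i}\sum_j\omega_j t_j}|=1$: applying it term by term to the defining series of $f_{\omega_1,\ldots,\omega_{k-1}}$ and of $\varrho_{T,k}$ gives
\begin{align*}
\hs\F_{\omega_1,\ldots,\omega_{k-1}}\hs_1\le\frac{1}{(2\pi)^{k-1}}\sum_{t_1,\ldots,t_{k-1}\in\mathbb{Z}}\hs\mathscr{R}_{t_1,\ldots,t_{k-1}}\hs_1,\qquad \hs\U_{T,k}\hs_1\le\sum_{|t_j|\ge T}\hs\mathscr{R}_{t_1,\ldots,t_{k-1}}\hs_1,
\end{align*}
the absolute convergence of these series being exactly what the Brillinger-type summability hypotheses provide. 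The substantive step is the bound on $\hs\V_{T,k}\hs_1$, and the crux is to control the inner factor $\big|\sum_{t=0}^{T-1}e^{-\mathbf{i}t\sum_j\omega_j}\{1-h_T(t)\prod_{j}h_T(t+t_j)\}\big|$. Since the bracket is an indicator and $|e^{-\mathbf{i}t\sum_j\omega_j}|\le 1$, this is at most the number of $t\in[0,T-1]$ for which $t+t_j$ leaves $[0,T-1]$ for at least one $j$. For a single lag with $|t_j|\le T-1$ there are exactly $|t_j|$ such values of $t$ (the top $t_j$ indices when $t_j\ge0$, the bottom $|t_j|$ when $t_j<0$), so a union bound over $j$ caps the count by $\sum_{j=1}^{k-1}|t_j|$. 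Feeding this into the triangle inequality for the outer lag sum produces $\hs\V_{T,k}\hs_1\le\sum_{|t_j|\le T-1}\hs\mathscr{R}_{t_1,\ldots,t_{k-1}}\hs_1\sum_{j=1}^{k-1}|t_j|$, as required. I expect the boundary-counting estimate to be the only genuinely delicate part; the remaining difficulties are the bookkeeping of the lag change of variables and the justification that the nuclear norm may be passed inside the absolutely convergent sums, which is precisely where the summability of $|t|\,\hs\mathscr{R}_{t_1,\ldots,t_{k-1}}\hs_1$ enters.
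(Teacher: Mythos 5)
The paper does not actually prove this lemma: its ``proof'' is the single line citing \citet{panaretos2013fourier}, so your self-contained derivation is necessarily a different route --- in effect you have reconstructed the Brillinger-type argument that underlies the cited result. Your reconstruction is correct and complete in all essentials: multilinearity of cumulants plus strict stationarity to pass to lag coordinates $(t_1,\ldots,t_{k-1},t)$ with weight $h_T(t)\prod_{j}h_T(t+t_j)$; the splitting $\prod_j h_T(t+t_j) = 1 - \{1-\prod_j h_T(t+t_j)\}$, with the unconstrained lag sum factorising into $(2\pi)^{k-1}f_{\omega_1,\ldots,\omega_{k-1}}\,\Delta^{(T)}\big(\sum_j\omega_j\big)$; the observation that when some $|t_j|\ge T$ the indicator product vanishes for every $t\in[0,T-1]$, so that region again factorises through $\Delta^{(T)}$ and yields $\varrho_{T,k}$; and the boundary count --- at most $|t_j|$ excluded values of $t$ per lag, hence at most $\sum_{j=1}^{k-1}|t_j|$ by a union bound --- which is exactly what the $\hs \V_{T,k}\hs_1$ estimate requires, the other two norm bounds being term-by-term triangle inequalities as you say. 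You were also right to flag that the displayed constants presuppose the normalisation $\widetilde X^{(T)}_{\omega}=(2\pi T)^{-1/2}\sum_t X_t e^{-\mathbf{i}\omega t}$ of \citet{panaretos2013fourier} rather than the $T^{-1/2}$ normalisation of Section \ref{F_estimation}; resolving that inconsistency in favour of the cited source is the only reading under which the statement is correct, and the paper never makes this explicit.

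One point you gloss over: with the paper's definition of $\rho_{T,k}$, whose bracket is $\{1-h_T(t+t_1)\cdots h_T(t+t_{k-1})h_T(t)\}$, your (correct) decomposition delivers the third term as $-\frac{1}{(2\pi T)^{k/2}}\rho_{T,k}$, not $+\frac{1}{(2\pi T)^{k/2}}\rho_{T,k}$ as printed; the plus sign would require the reversed bracket $\{h_T(t+t_1)\cdots h_T(t+t_{k-1})h_T(t)-1\}$. This is a sign typo in the statement rather than a flaw in your argument --- and it is immaterial downstream, since only the bound on $\hs \V_{T,k}\hs_1$ is ever used --- but your closing claim that matching the three contributions ``yields the claimed identity'' silently passes over it; you should state the identity with the sign your derivation actually produces and note the discrepancy explicitly.
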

\begin{proof}[Proof]
Proofs of these results can be found in \cite{panaretos2013fourier}. \end{proof}

\begin{proposition} \label{2.6} Assume assumptions (A1)-(A3) and (B1)-(B6) in Section 4 and 7 are satisfied, 
 then 
\begin{align*}
\mathbb{E} p_{\nu_s}^{(T)}(\tau,\sigma) = f_{\nu_s}^{XX}(\tau,\sigma) + \frac{1}{T} \vartheta_{\nu_s}(\tau,\sigma). 
\end{align*}
For $\nu_r + \nu_s \neq 2\pi $,
\begin{align*}
{\cum} \Big(\widetilde{X}_{\nu_s}(\tau), \widetilde{X}_{\nu_r}(\sigma) \Big) = \frac{1}{ T} \vartheta_{\nu_s,\nu_r}(\tau,\sigma).
\end{align*}
For the corresponding integral operator, we have $\mathscr{P}_{\nu_s,T}^{XX} = \F_{\nu_s}^{XX} +T^{-1}  \V_{\nu_s}$ and $\hs \V_{\nu_s} \hs_1  < C$ and
$\mathbb{E} \widetilde{X}_{\nu_s} \otimes \widetilde{X}_{\nu_r} = T^{-1} \V_{\nu_s,\nu_r}$ with $\hs \V_{\nu_s,\nu_r}\hs_1 <C$ uniformly over $s,r$, where $C$ is an universal constant.
\end{proposition}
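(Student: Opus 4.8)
The plan is to reduce both asserted identities to the second-order cumulant formula for the discrete Fourier transforms supplied by Lemma \ref{SpecDenX} with $k=2$, and then to read off the leading and remainder terms by evaluating the Dirichlet kernel $\Delta^{(T)}$ on the Fourier grid. First I would observe that, since $\E[X_t]=0$ forces $\E[\widetilde X_{\omega,T}]=0$, every expectation in the statement is a pure second-order cumulant: using that the $X_t$ are real, so that $\overline{\widetilde X_{\omega,T}(\sigma)}=\widetilde X_{-\omega,T}(\sigma)$, one has
\begin{align*}
\E\, p_{\nu_s}^{(T)}(\tau,\sigma)=\cum\big(\widetilde X_{\nu_s,T}(\tau),\widetilde X_{-\nu_s,T}(\sigma)\big),\qquad \E\big[\widetilde X_{\nu_s,T}(\tau)\,\widetilde X_{\nu_r,T}(\sigma)\big]=\cum\big(\widetilde X_{\nu_s,T}(\tau),\widetilde X_{\nu_r,T}(\sigma)\big).
\end{align*}
Feeding these into Lemma \ref{SpecDenX} at $k=2$ produces, in each case, the three-term decomposition $\tfrac1T f_{\omega_1}\Delta^{(T)}(\omega_1+\omega_2)-\tfrac1T\Delta^{(T)}(\omega_1+\omega_2)\varrho_{T,2}+\tfrac{1}{2\pi T}\rho_{T,2}$, where $f_{\omega_1}=f^{XX}_{\omega_1}$.

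The decisive step is the evaluation of $\Delta^{(T)}$ on the grid $\{2\pi m/T\}$, where $\Delta^{(T)}(2\pi m/T)=T\,\1\{m\equiv 0 \bmod T\}$. For the expected periodogram the frequency sum is $\nu_s+(-\nu_s)=0$, so $\Delta^{(T)}(0)=T$ and the first term collapses to exactly $f_{\nu_s}^{XX}$, leaving a remainder $-\varrho_{T,2}+\tfrac{1}{2\pi T}\rho_{T,2}$. For the cross term the frequency sum is $\nu_s+\nu_r=2\pi(s+r)/T$, which is congruent to $0$ modulo $2\pi$ only when $s+r\equiv 0\bmod T$; the hypothesis $\nu_r+\nu_s\neq 2\pi$ (together with the trivial exclusion of $s=r=0$) rules this out, so $\Delta^{(T)}(\nu_s+\nu_r)=0$ and the first two terms vanish identically, leaving only $\tfrac{1}{2\pi T}\rho_{T,2}$. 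The same dichotomy, applied with $\omega_2=-\nu_r$, governs $\E[\widetilde X_{\nu_s,T}\otimes\widetilde X_{\nu_r,T}]$, whose kernel-frequency sum is $\nu_s-\nu_r$ and hence vanishes precisely when $s=r$.

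It then remains to control the remainder operators uniformly in $s,r$, and this is where condition (B4) enters. I would invoke the nuclear-norm bounds of Lemma \ref{SpecDenX}, namely $\hs\U_{T,2}\hs_1\leq\sum_{|t|\geq T}\hs\mathscr R_t\hs_1$ and $\hs\V_{T,2}\hs_1\leq\sum_{|t|\leq T-1}|t|\,\hs\mathscr R_t\hs_1$. Under (B4) the second sum is bounded by $\sum_t|t|\,\hs\mathscr R_t\hs_1<\infty$, so $\hs\V_{T,2}\hs_1=O(1)$ uniformly, whence $\tfrac{1}{2\pi T}\rho_{T,2}$ contributes an operator of nuclear norm $O(T^{-1})$; the first sum is $O(T^{-(p+5)})$ after a crude $|t|^{-(p+5)}$ weighting, so $\varrho_{T,2}$ (and also the diagonal contribution $\Delta^{(T)}(0)T^{-1}\varrho_{T,2}=\varrho_{T,2}$) is $o(T^{-1})$ and is absorbed into the $T^{-1}\vartheta$ remainder. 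Passing from kernel identities to the induced operators and taking expectations yields $\E\mathscr P^{XX}_{\nu_s,T}=\F^{XX}_{\nu_s}+T^{-1}\V_{\nu_s}$ and $\E[\widetilde X_{\nu_s,T}\otimes\widetilde X_{\nu_r,T}]=T^{-1}\V_{\nu_s,\nu_r}$ for $s\neq r$, with $\hs\V_{\nu_s}\hs_1,\hs\V_{\nu_s,\nu_r}\hs_1<C$ uniformly. The only genuinely delicate points are the exact bookkeeping of the Dirichlet-kernel boundary case $s+r=T$ excluded by the hypothesis, and verifying summability of the $|t|$-weight in $\hs\V_{T,2}\hs_1$; it is precisely this weighting, guaranteed by (B4), that upgrades the remainder bounds from pointwise to uniform in the frequency indices.
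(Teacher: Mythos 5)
Your proposal is correct and takes essentially the same route as the paper: the paper's entire proof is a one-line appeal to Proposition 2.6 of \citet{panaretos2013fourier} together with Lemma \ref{SpecDenX}, and your argument simply executes the $k=2$ case of that lemma — evaluating the Dirichlet kernel $\Delta^{(T)}$ on the Fourier grid and bounding the remainder operators $\varrho_{T,2}$, $\rho_{T,2}$ in nuclear norm via (B4) — which is exactly the computation that citation encapsulates. Your explicit exclusion of the boundary case $s=r=0$ (i.e.\ reading the hypothesis as $\nu_s+\nu_r\not\equiv 0 \pmod{2\pi}$) is a correct and welcome clarification of a slight imprecision in the stated hypothesis, not a gap in your argument.
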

\begin{proof}[Proof]
The results follow by Proposition 2.6 in \cite{panaretos2013fourier} and by  Lemma \ref{SpecDenX}.
\end{proof}
\begin{proposition} \label{3.1}Assume assumptions (A1)-(A3) and (B1)-(B6) in Section 4 and 7 are satisfied. Then,  
\begin{align}
\mathbb{E} f^{(T)}_{\omega}(\tau,\sigma) = f_{\omega}^{XX}(\tau,\sigma) + \frac{1}{B_T T}\vartheta_{\omega}(\tau,\sigma)  
 \label{proposition3.1} .
\end{align}
 For the induced operators, $\mathbb{E}\widehat{\F}_{\omega}^{XX} = \F_{\omega}^{XX} + B_T^{-1}T^{-1} \V_{\omega}$ with $\hs \V_{\omega} \hs_{1} <C $ for an universal constant $C$.
\end{proposition}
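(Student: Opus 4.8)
The plan is to compute the bias directly from the definition of the smoothed periodogram estimator and reduce everything to the deterministic kernel-sum estimates of Lemma \ref{SumW}. First I would take expectations inside the smoother, writing
\[
\mathbb{E} f_{\omega}^{(T)}(\tau,\sigma) = \frac{1}{T}\sum_{s=0}^{T-1} W^{(T)}(\omega-\nu_s)\,\mathbb{E} p_{\nu_s}^{(T)}(\tau,\sigma),
\]
and then invoke Proposition \ref{2.6} to replace $\mathbb{E} p_{\nu_s}^{(T)}$ by $f_{\nu_s}^{XX} + T^{-1}\vartheta_{\nu_s}$. This splits the expectation into a \emph{main term} $T^{-1}\sum_s W^{(T)}(\omega-\nu_s) f_{\nu_s}^{XX}$ and a \emph{secondary term} $T^{-2}\sum_s W^{(T)}(\omega-\nu_s)\vartheta_{\nu_s}$. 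The secondary term is disposed of immediately at the operator level: since $\hs\V_{\nu_s}\hs_1 < C$ uniformly by Proposition \ref{2.6}, and $T^{-1}\sum_s|W^{(T)}(\omega-\nu_s)| = O(1)$ (the kernel $W$ being positive, so Lemma \ref{SumW}(A) with $j=0$ applies), its induced operator has nuclear norm $O(T^{-1})$, which is of smaller order than $T^{-1}B_T^{-1}$ and is thus absorbed into $\V_{\omega}$.

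For the main term I would pass from $\nu_s$ to $u_s$ using periodicity of $f^{XX}$ and $W^{(T)}$, and then Taylor-expand $f_{u_s}^{XX}$ about $\omega$ to order $p$ exactly as in Lemma \ref{SchNo12}:
\[
f_{u_s}^{XX} = f_{\omega}^{XX} + \sum_{j=1}^{p-1}\frac{(u_s-\omega)^j}{j!}f_{\omega}^{XX,(j)} + (u_s-\omega)^p g_{p,u_s,\omega}.
\]
Multiplying by $W^{(T)}(\omega-u_s)$ and averaging over $s$, the coefficient of each summand is governed by Lemma \ref{SumW}: the $j=0$ term yields $f_{\omega}^{XX}\bigl(1 + O(T^{-1}B_T^{-1})\bigr)$ by part (A) with $j=0$, while for $1 \leq j \leq p-1$ the higher-order kernel condition (B5) forces $\int W(\alpha)\alpha^j\, d\alpha = 0$, so part (A) gives $T^{-1}\sum_s W^{(T)}(\omega-u_s)(u_s-\omega)^j = O(T^{-1}B_T^{-1})$; each such term therefore contributes $O(T^{-1}B_T^{-1})$ in nuclear norm, since $\hs\F_{\omega}^{XX,(j)}\hs_1$ is uniformly bounded by Lemma \ref{SchNo12}.

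The Taylor remainder is controlled by Lemma \ref{SumW}(B), which bounds $T^{-1}\sum_s|W^{(T)}(\omega-u_s)||\omega-u_s|^p$ by $O(B_T^p) + O(T^{-1}B_T^{-1})$; invoking the bandwidth constraint $B_T^{p+1} < T^{-1}$ from (B5) rewrites $B_T^p = B_T^{-1}\cdot B_T^{p+1} < T^{-1}B_T^{-1}$, so this piece is again $O(T^{-1}B_T^{-1})$, and the uniform bound $\hs\G_{p,u_s,\omega}\hs_1 \leq C$ from Lemma \ref{SchNo12} converts it to a nuclear-norm estimate on the induced operator. Collecting the leading part $f_{\omega}^{XX}$ of the main term together with all the $O(T^{-1}B_T^{-1})$ residuals and the $O(T^{-1})$ secondary term yields $\mathbb{E}\widehat{\F}_{\omega}^{XX} = \F_{\omega}^{XX} + B_T^{-1}T^{-1}\V_{\omega}$, and the operator statement follows verbatim by carrying every estimate in nuclear norm throughout. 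I expect the main obstacle to be bookkeeping rather than conceptual: the delicate points are ensuring that each estimate is \emph{uniform in $\omega$} (so that a single constant $C$ controls $\hs\V_{\omega}\hs_1$), and correctly combining the higher-order kernel vanishing (B5) with the bandwidth--sample-size trade-off $B_T^{p+1} < T^{-1}$ so that both the genuine smoothing bias of order $B_T^p$ and the periodogram discretisation error of order $T^{-1}B_T^{-1}$ collapse into the single advertised rate.
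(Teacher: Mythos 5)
Your proposal is correct and follows essentially the same route as the paper's proof: take expectations inside the smoother and invoke Proposition \ref{2.6}, pass from $\nu_s$ to $u_s$ by periodicity, Taylor-expand as in Lemma \ref{SchNo12}, control the kernel-moment sums with Lemma \ref{SumW}, and absorb the $O(B_T^p)$ remainder into $O(T^{-1}B_T^{-1})$ using the condition $B_T^{p+1}<T^{-1}$ from (B5), with all bounds carried uniformly in $\omega$ at the nuclear-norm level. The only cosmetic difference is that the paper cites \citet{panaretos2013fourier} for the kernel-level identity and then runs this computation for the operator statement, whereas you derive both in a single pass (and your $O(T^{-1})$ bound on the secondary term is slightly sharper than, but consistent with, the paper's $O(T^{-1}B_T^{-1})$).
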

\begin{proof} The first statement is equivalent to proposition 3.1 in \cite{panaretos2013fourier}. Now, by definition,
\begin{align*}
f_{\omega}^{(T)} = \frac{2\pi}{T} \sum_{s=1}^T W^{(T)}(\omega - \nu_s) p^{(T)}_{\nu_s}.
\end{align*}
Using Proposition \ref{2.6},
\begin{align*}
\mathbb{E} f_{\omega}^{(T)}(\tau,\sigma) & = \frac{2\pi}{T}\sum_{s=0}^{T-1} W^{(T)}(\omega - \nu_s) f_{\nu_s}^{XX}(\tau,\sigma) + \frac{2\pi}{T} \sum_{s=0}^{T-1} W^{(T)}(\omega-\nu_s) \frac{1}{T} \vartheta_{\nu_s}(\tau,\sigma) \\
& = \frac{2\pi}{T}\sum_{s=0}^{T-1} W^{(T)}(\omega - \nu_s) f_{\nu_s}^{XX}(\tau,\sigma) + \frac{1}{TB_T} \vartheta_{(1)}(\tau,\sigma) ,
\end{align*}
with the operator $ \V_{(1)}$ induced by $\vartheta_{(1)}$ satisfying $ \hs \V_{(1)}\hs_1 = O(1)$ uniformly over $\omega$. Replacing $\nu_s$ by $u_s$, and using a Taylor expansion, 
\begin{align*}
f_{u_s}^{XX}(\tau,\sigma) = f_{\omega}^{XX}(\tau,\sigma) + \sum_{j=1}^{p-1} \frac{\big(u_s -\omega \big)^j}{j!} \frac{\partial^j  f_{\alpha}^{XX}(\tau,\sigma)}{\partial \alpha^j}\Big|_{\alpha = \omega} + \big(u_s - \omega \big)^{p} g_{p,u_s,\omega}(\tau,\sigma).
\end{align*}
Summing over $s$ now gives
\begin{align*}
f_{\omega}^{(T)}(\tau,\sigma)  = \frac{2\pi}{T} \sum_{j=0}^{p-1}\Big\{\sum_{s=0}^{T-1} W^{(T)}(\omega -u_s) \times (u_s - \omega)^j \Big\} \times \frac{\partial^j f_{\alpha}^{XX}(\tau,\sigma)}{\partial \alpha^j}\Big|_{\alpha = \omega} + \\
\frac{2\pi}{T} \sum_{s=0}^{T-1} W^{(T)}(\omega-u_s)  \times \big(\omega-u_s \big)^p g_{p,u_s,\omega}(\tau,\sigma).
\end{align*}
For $0\leq j \leq p-1$, using results in Lemma \ref{SumW},
\begin{align*}
 \frac{2\pi}{T}\Big\{\sum_{s=0}^{T-1} W^{(T)}(\omega -u_s) \times (\omega -u_s)^j \Big\} \times \frac{\partial^j f_{\alpha}^{XX}(\tau,\sigma)}{\partial \alpha^j}\Big|_{\alpha = \omega}  &= \Big\{\delta_{0j} + O\big(T^{-1} B_T^{-1} \big) \Big\} \times \frac{\partial^j f_{\alpha}^{XX}(\tau,\sigma)}{\partial \alpha^j}\Big|_{\alpha = \omega}.
\end{align*}
Taking the sum over $j$ then yields
\begin{align*}
 \frac{2\pi}{T} \sum_{j=0}^{p-1}\Big\{\sum_{s=0}^{T-1} W^{(T)}(\omega -u_s) \times (u_s - \omega)^j \Big\} \times f_{\omega}^{XX,(j)}(\tau,\sigma) &= f_{\omega}^{XX}  + O\big(T^{-1} B_T^{-1} \big) \sum_{j=1}^{p-1} f_{\omega}^{XX,(j)}(\tau,\sigma).
\end{align*}
Finally, letting $\mathscr{G}_{p,u_s,\omega}$  be the operator induced by $g_{p,u_s,\omega}$, we have
\begin{align*}
 \hs \frac{2\pi}{T} \sum_{s=0}^{T-1} W^{(T)}(\omega-u_s)  \times \big(\omega-u_s \big)^p \mathscr{G}_{p,u_s,\omega} \hs_{1}
 &\leq \frac{2\pi}{T}\sum_{s=0}^{T-1} \Big| W^{(T)}(\omega-u_s)  \times \big(\omega-u_s \big)^p\Big|  \times \sup_{s} \hs\mathscr{G}_{p,u_s,\omega} \hs_{1} \\
&\leq O\big( B_T^p + T^{-1} B_T^{-1}\big)  = O(T^{-1} B_T^{-1}),
\end{align*}
 by Lemma \ref{SumW} and Lemma \ref{SchNo12}. Combining the above results completes the proof.
\end{proof}
\begin{proposition}\label{4cumS1}
(A) Let 
\begin{align*}
p_{r,s}^{(T)} = {\rm cum } \big(\widetilde{X}_{\nu_s}^{(T)}(\tau_1), \widetilde{X}_{-\nu_s}^{(T)}(\sigma_1), \widetilde{X}_{-\nu_r}^{(T)}(\tau_2), \widetilde{X}_{\nu_r}^{(T)}(\sigma_2) \big)
\end{align*}
and $\mathscr{P}_{r,s}^{(T)}$ be its induced operator. Then,
\begin{align*}
\hs \mathscr{P}_{r,s}^{(T)} \hs_1  = O(T^{-1}).
\end{align*}
(B) Let $p_{r,s} \in L^2([0,1]^4,\mathbb{C})$ such that its associated operator $\mathscr{P}_{r,s}$ satisfies $\hs \mathscr{P}_{r,s} \hs_1 < CT^{-1}$ for all $r,s =0,\ldots, T-1$ and a universal constant $C$. Let
\begin{align*}
 \sum_{r,s=0}^{T-1} W^{(T)}(\omega - \nu_r) W^{(T)}(\omega - \nu_s) p_{r,s}  = p_{\omega}.
\end{align*}
and $\mathscr{P}_{\omega}$ be the induced operator of $p_{\omega}$. Then
\begin{align*}
\int_{0}^{2\pi} \hs \mathscr{P}_{\omega}\hs_1 d\omega  = O(T).
\end{align*}
\end{proposition}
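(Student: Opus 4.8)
The two parts are handled independently. For part (A), the plan is to invoke the higher-order spectral formula of Lemma \ref{SpecDenX} with $k=4$ and the frequency vector $(\omega_1,\omega_2,\omega_3,\omega_4) = (\nu_s,-\nu_s,-\nu_r,\nu_r)$. The decisive feature is that $\sum_{j=1}^{4}\omega_j = 0$, so that $\Delta^{(T)}(0) = \sum_{t=0}^{T-1}1 = T$. Substituting this into the formula (where $k/2 = 2$), the prefactors $T^{-k/2} = T^{-2}$ collapse against $\Delta^{(T)}(0) = T$, leaving
$$p_{r,s}^{(T)} = \frac{2\pi}{T}f_{\nu_s,-\nu_s,-\nu_r} - \frac{2\pi}{T}\varrho_{T,4} + \frac{1}{(2\pi)^2 T^2}\rho_{T,4}.$$
I would then bound the three induced operators in nuclear norm using the three inequalities of Lemma \ref{SpecDenX}. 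The first term gives $\frac{2\pi}{T}\hs\F_{\nu_s,-\nu_s,-\nu_r}\hs_1 = O(T^{-1})$, because $\hs\F_{\nu_s,-\nu_s,-\nu_r}\hs_1 \leq (2\pi)^{-3}\sum_{t_1,t_2,t_3}\hs\mathscr{R}_{t_1,t_2,t_3}\hs_1$ is finite by assumption (B6); the $\varrho$-term is $O(T^{-1})$ as well, since $\hs\mathscr{U}_{T,4}\hs_1 \leq \sum_{|t_j|\geq T}\hs\mathscr{R}_{t_1,t_2,t_3}\hs_1$ is a tail of the same summable series. The only term that requires attention is the $\rho$-term, for which $\hs\mathscr{V}_{T,4}\hs_1 \leq \sum_{|t_j|\leq T-1}\hs\mathscr{R}_{t_1,t_2,t_3}\hs_1\sum_{j=1}^{3}|t_j|$; here I would use the crude bound $\sum_{j=1}^{3}|t_j|\leq 3(T-1)$ together with (B6), giving $\hs\mathscr{V}_{T,4}\hs_1 = O(T)$ and hence $(2\pi)^{-2}T^{-2}\hs\mathscr{V}_{T,4}\hs_1 = O(T^{-1})$. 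Summing the three contributions proves $\hs\mathscr{P}_{r,s}^{(T)}\hs_1 = O(T^{-1})$.

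For part (B), I would start from the triangle inequality for the nuclear norm together with the hypothesis $\hs\mathscr{P}_{r,s}\hs_1 < CT^{-1}$, which separates as
$$\hs\mathscr{P}_{\omega}\hs_1 \leq \sum_{r,s=0}^{T-1}\big|W^{(T)}(\omega-\nu_r)\big|\,\big|W^{(T)}(\omega-\nu_s)\big|\,\hs\mathscr{P}_{r,s}\hs_1 \leq \frac{C}{T}\Big(\sum_{r=0}^{T-1}\big|W^{(T)}(\omega-\nu_r)\big|\Big)^2.$$
Integrating over $\omega\in[0,2\pi]$ then reduces the statement to the uniform estimate $\sum_{r}|W^{(T)}(\omega-\nu_r)| = O(T)$. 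This follows from a support-counting argument: since $W$ is bounded and supported on $[-1,1]$, the kernel $W^{(T)}$ is bounded by $\|W\|_\infty B_T^{-1}$ and is supported, modulo $2\pi$, on a set of measure $O(B_T)$; the grid $\{\nu_r = 2\pi r/T\}$ has mesh $2\pi/T$, so at most $O(1+TB_T)$ of its points meet this support, each contributing a term of size $O(B_T^{-1})$. Hence $\sum_r|W^{(T)}(\omega-\nu_r)| = O(B_T^{-1}+T) = O(T)$ uniformly in $\omega$, using $TB_T\to\infty$ (equivalently, this is Lemma \ref{SumW}(A) with $j=0$). Consequently $\int_0^{2\pi}\hs\mathscr{P}_{\omega}\hs_1\,d\omega \leq \frac{C}{T}\cdot 2\pi\cdot O(T^2) = O(T)$.

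The obstacles are conceptual rather than computational. In part (A) the crux is recognising that the vanishing frequency sum is exactly what promotes the generic $O(T^{-2})$ magnitude of a fourth-order discrete-Fourier-transform cumulant to the sharper $O(T^{-1})$, and that the extra linear factor $\sum_{j}|t_j|$ carried by the $\rho$-term is harmless once divided by $T^2$. In part (B) the single point needing care is the uniform bound $\sum_r|W^{(T)}(\omega-\nu_r)| = O(T)$, whose error relative to the leading $O(T)$ term must be controlled; this is precisely where the compact support and bounded variation of $W$ enter, mirroring the bookkeeping already performed in Lemma \ref{SumW}.
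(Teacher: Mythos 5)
Your proposal is correct and matches the paper's proof in essentially every respect: part (A) is the paper's argument verbatim — Lemma \ref{SpecDenX} with $k=4$, the vanishing frequency sum giving $\Delta^{(T)}(0)=T$, assumption (B6) to control the leading and tail ($\varrho$) terms, and the crude bound $\sum_{j}|t_j|\leq 3(T-1)$ to make the $\rho$-term $O(T^{-1})$ after division by $T^2$. For part (B) the paper reaches the same $O(T)$ bound from the same ingredients (kernel bounded by $\|W\|_{\infty}B_T^{-1}$, support of measure $O(B_T)$, hence $O(TB_T)$ active grid points, and $\sup_{r,s}\hs\mathscr{P}_{r,s}\hs_1=O(T^{-1})$), merely organizing the bookkeeping through indicator sets $I_{T,s}$ whose lengths $O(B_T)$ are integrated over $\omega$, rather than your uniform-in-$\omega$ bound $\hs\mathscr{P}_{\omega}\hs_1=O(T)$ followed by trivial integration over $[0,2\pi]$ — the two computations are equivalent.
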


\begin{proof}
\noindent (A) Using Lemma \ref{SpecDenX}
\begin{align*}
\hs \mathscr{P}_{r,s}^{(T)} \hs_1 &\leq \frac{1}{T}\sum_{t_1,t_2,t_3 = -\infty}^{\infty} \hs \mathscr{R}_{t_1,t_2,t_3} \hs_1 + \frac{2\pi}{T}\sum_{|t_j| \geq T} \hs \mathscr{R}_{t_1,t_2,t_3}\hs_1  + \frac{1}{(2\pi T)^2} \sum_{|t_j| < T-1} \hs \mathscr{R}_{t_1,t_2,t_3}\hs_1 \times \sum_{j=1}^3 |t_j| \\
&\leq \frac{O(1)}{T} \sum_{t_1,t_2,t_3  = -\infty}^{\infty} \hs \mathscr{R}_{t_1,t_2,t_3} \hs_1  = O(T^{-1}).
\end{align*}
\noindent (B)
Recall that
\begin{align*}
W^{(T)}(\omega -\nu_s) = \frac{1}{B_T} \sum_{i\in \mathbb{Z}} W\Big(\frac{\omega - \nu_s + 2i\pi}{B_T} \Big) ;\quad W^{(T)}(\omega -\nu_r) = \frac{1}{B_T} \sum_{j\in \mathbb{Z}} W\Big(\frac{\omega - \nu_r + 2j\pi}{B_T} \Big).
\end{align*}
Note now that $W(x) = 0$ for all $|x| > 1$. Hence, if $|x-y| > 2$, then at least one of $W(x)$ and $W(y)$ vanishes.  We  have $-2\pi \leq \nu_r - \nu_s \leq 2\pi$, and so if 
$$\nu_r -\nu_s  \notin S_T:=  [-2\pi, -2\pi + 2B_T] \cup [-2B_T,2B_T] \cup [2\pi - 2B_T, 2\pi],$$
 then $\big| \nu_r - \nu_s  + 2 k \pi \big| > 2B_T$ for any integer $k$. Thus, if $\nu_r - \nu_s \notin S_T$, for all $i,j \in \mathbb{Z}$
\begin{align*}
\left|  \frac{\omega - \nu_s + 2 i \pi}{B_T} - \frac{\omega - \nu_r + 2j \pi}{B_T} \right |  = \left|  \frac{\nu_r- \nu_s + 2(i - j) \pi}{B_T} \right | > 2.
 \end{align*}
This means that  $ W\Big(\frac{\omega - \nu_s + 2j_s \pi}{B_T} \Big)W\Big(\frac{\omega - \nu_r + 2j_r \pi}{B_T} \Big) = 0$ for all $i,j \in \mathbb{Z}$, if $\nu_r - \nu_s \notin S_T$. Write  
\begin{align*}
S_{T,s} = [-2\pi + \nu_s, -2\pi + 2B_T + \nu_s] \cup [-2B_T+\nu_s,2B_T + \nu_s] \cup [2\pi - 2B_T + \nu_s, 2\pi +\nu_s].
\end{align*}
Then $W^{(T)}(\omega - \nu_s) W^{(T)}(\omega -\nu_r) = 0$ for $\nu_r \notin S_{T,s}$.  The number of $r$ such that $\nu_r \in S_{T,s}$ is of order $TB_T$. Therefore,
\begin{align*}
\sum_{s,r=0}^{T-1} W^{(T)}(\omega - \nu_s) W^{(T)}(\omega -\nu_r)  p_{r,s} = \sum_{s=0}^{T-1} \sum_{r \in S_{T,s}} W^{(T)}(\omega -\nu_s) W^{(T)}(\omega -\nu_r) p_{r,s}\\
 = \sum_{s=0}^{T-1} W^{(T)}(\omega -\nu_s) \times \sum_{r\in S_{T,s}} W^{(T)}(\omega -\nu_r)  p_{r,s}.
\end{align*}
For fixed $\nu_s$, let
\begin{align*}
I_{T,s} = \Big\{\omega :  0 \leq \omega \leq 2\pi, \quad W\big(\frac{\omega - \nu_s + 2i\pi }{B_T} \big) \neq 0, \quad \text{ for some } \quad i \in \mathbb{Z} \Big\}.
\end{align*}
This means that  if $\omega \in I_{T,s}$, then
\begin{align*}
-B_T \leq \omega -\nu_s + 2i \pi \leq B_T  \iff \omega \in  [-B_T + \nu_s - 2i \pi, B_T + \nu_s - 2i \pi]
\end{align*}
for some $i \in \mathbb{Z}$. The length of $[-B_T + \nu_s - 2i \pi, B_T + \nu_s - 2i \pi]$ is $2B_T$. For  $|i| \geq 4$, $[-B_T + \nu_s - 2i \pi, B_T + \nu_s - 2i \pi] \cap [0,2\pi]  = \emptyset $. Hence, the length of $I_{T,s}$ is of order $O(B_T)$. By the definition of $I_{T,s}$, 
$$\big| W^{(T)}(\omega - \nu_s) \big| \leq 1_{I_{T,s}}(\omega) \frac{\|W\|_{\infty}}{B_T}.$$
 
The number of $r$ such that $\nu_r \in S_{T,s}$ is of order $TB_T$.  Thus, combining our results
\begin{align*}
\hs \sum_{s,r=0}^{T-1}W^{(T)}(\omega - \nu_s) W^{(T)}(\omega -\nu_r) \mathscr{P}_{r,s}  \hs_1 &\leq \sum_{s=0}^{T-1} 1_{I_{T,s}}(\omega) \frac{ \|W\|_{\infty}}{B_T} O(TB_T) \frac{\|W\|_{\infty}}{B_T} \sup_{r,s} \hs \mathscr{P}_{r,s} \hs_1\\
&= \sum_{s=0}^{T-1} 1_{I_{T,s}}(\omega) \frac{O(1)}{B_T}. 
\end{align*}
Integrating over $\omega$ and remarking that $I_{T,s}$ is of order $B_T$, we obtain
 \begin{align*}
 \int_{0}^{2\pi} \hs \mathscr{P}_{\omega}\hs_1 d\omega = O(T).
 \end{align*}
 \end{proof}

\begin{proposition}\label{CovPerX} Assume assumptions (A1)-(A3) and (B1)-(B6) in Section 4 and 7 are satisfied, then
\begin{align*}
\mathbb{E} \Big[p_{\nu_s}^{(T)}(\tau_1,\sigma_1) \times p_{-\nu_r}^{(T)}(\tau_2,\sigma_2) \Big] = \mathbb{E} \Big[p_{\nu_s}^{(T)}(\tau_1,\sigma_1) \Big] \times \mathbb{E} \Big[ p_{-\nu_r}^{(T)}(\tau_2,\sigma_2)\Big] +  p_{r,s}^{(T)}(\tau_1,\sigma_1,\tau_2,\sigma_2) + \\
\eta(\nu_r - \nu_s) f_{\nu_s}^{XX} (\tau_1,\tau_2) f_{-\nu_s}^{XX}(\sigma_1,\sigma_2)+ \frac{1}{T} \eta(\nu_s- \nu_r) \vartheta_{1,\nu_s,\nu_r,f}(\tau_1,\tau_2) \odot \vartheta_{2,\nu_s,\nu_r,f}(\sigma_1,\sigma_2)  + \\
 \eta(\nu_s +\nu_r) f_{\nu_s}^{XX}(\tau_1,\sigma_2) f_{-\nu_s}^{XX}(\sigma_1,\tau_2) + \frac{1}{T}\eta(\nu_s+ \nu_r)  \vartheta_{3,\nu_s,\nu_r,f}(\sigma_1,\tau_2) \odot \vartheta_{4,\nu_s,\nu_r,f}(\tau_1,\sigma_2) + \\
\frac{1}{T^2}\vartheta_{1,\nu_s,\nu_r}(\tau_1,\sigma_1) \times \vartheta_{2,\nu_s,\nu_r}(\tau_2,\sigma_2) + \frac{1}{T^2}\vartheta_{3,\nu_s,\nu_r}(\tau_1,\sigma_2) \times \vartheta_{4,\nu_s,\nu_r}(\sigma_1,\tau_2) ,
\end{align*}
where  $\eta(x)$ equals one if $x\in 2\pi \mathbb{Z}$ and zero otherwise, and  $\vartheta_{i,\nu_s,\nu_r,f} \odot \vartheta_{j,\nu_s,\nu_r,f} \in Conv_C\big(L^2([0,1]^2,\mathbb{C}) \times  L^2([0,1]^2,\mathbb{C})\big)$  with a universal constant  $C$.
\end{proposition}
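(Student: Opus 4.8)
The plan is to reduce the computation to the standard product-moment to cumulant expansion (see \citet{brillinger2001time}) and then to evaluate the resulting second-order cumulants via Lemma~\ref{SpecDenX}. Since the $X_t$ are real, $\overline{\widetilde{X}_{\omega,T}} = \widetilde{X}_{-\omega,T}$, so first I would write the product of the two periodogram kernels as a product of four discrete Fourier transforms,
\begin{align*}
p_{\nu_s}^{(T)}(\tau_1,\sigma_1)\, p_{-\nu_r}^{(T)}(\tau_2,\sigma_2) = \widetilde{X}_{\nu_s,T}(\tau_1)\,\widetilde{X}_{-\nu_s,T}(\sigma_1)\,\widetilde{X}_{-\nu_r,T}(\tau_2)\,\widetilde{X}_{\nu_r,T}(\sigma_2).
\end{align*}
Taking expectations and expanding over all partitions of the four factors, the centredness $\mathbb{E}[\widetilde{X}_{\omega,T}]=0$ kills every partition containing a singleton block, leaving only the full fourth-order cumulant together with the three pairings into two second-order cumulants.

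Next I would match each surviving partition to one of the stated groups of summands. Labelling the four factors $1,2,3,4$ in the displayed order, the pairing $\{12\,|\,34\}$ reproduces $\mathbb{E}[p_{\nu_s}^{(T)}(\tau_1,\sigma_1)]\,\mathbb{E}[p_{-\nu_r}^{(T)}(\tau_2,\sigma_2)]$ (the means being zero); the single block $\{1234\}$ is by definition the fourth-order cumulant $p_{r,s}^{(T)}$; the pairing $\{13\,|\,24\}$, which couples the $\tau$-arguments with one another and the $\sigma$-arguments with one another, produces the terms carrying the factor $\eta(\nu_r-\nu_s)$; and the pairing $\{14\,|\,23\}$, which couples $\tau_1$ with $\sigma_2$ and $\sigma_1$ with $\tau_2$, produces those carrying $\eta(\nu_s+\nu_r)$.

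The analytic heart of the argument is the evaluation of each pairwise cumulant through Lemma~\ref{SpecDenX} with $k=2$ (equivalently Proposition~\ref{2.6}). At Fourier frequencies the factor $\Delta^{(T)}(\omega_1+\omega_2)$ equals $T$ exactly when $\omega_1+\omega_2\in 2\pi\mathbb{Z}$ and vanishes otherwise, i.e. $\Delta^{(T)}(\omega_1+\omega_2) = T\,\eta(\omega_1+\omega_2)$; hence $\cum(\widetilde{X}_{\omega_1,T},\widetilde{X}_{\omega_2,T}) = \eta(\omega_1+\omega_2)\,f_{\omega_1}^{XX} + T^{-1}\vartheta$, the leading spectral density surviving only when the two frequencies are opposite modulo $2\pi$. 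For $\{13\,|\,24\}$ this gives the two factors $\eta(\nu_s-\nu_r)\,f_{\nu_s}^{XX}(\tau_1,\tau_2)$ and $\eta(\nu_r-\nu_s)\,f_{-\nu_s}^{XX}(\sigma_1,\sigma_2)$; multiplying and using $\eta(x)=\eta(-x)$ isolates the leading term $\eta(\nu_r-\nu_s)\,f_{\nu_s}^{XX}(\tau_1,\tau_2)\,f_{-\nu_s}^{XX}(\sigma_1,\sigma_2)$, the two leading-times-remainder cross terms (which keep the $\eta$-factor) combine into the $T^{-1}\eta(\nu_s-\nu_r)\,\vartheta_1\odot\vartheta_2$ contribution, and the product of the two $T^{-1}$ remainders gives the corresponding $O(T^{-2})$ term of $\vartheta\times\vartheta$ type. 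The pairing $\{14\,|\,23\}$ is handled identically, producing the $\eta(\nu_s+\nu_r)$ family with its $O(T^{-1})$ and $O(T^{-2})$ remainders.

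The hard part will be the bookkeeping needed to certify that every remainder belongs to $Conv_C\big(L^2([0,1]^2,\mathbb{C})\times L^2([0,1]^2,\mathbb{C})\big)$ with a constant $C$ that is uniform in $s,r$. To control these I would invoke the nuclear-norm estimates of Lemma~\ref{SpecDenX} on $\hs \U_{T,2} \hs_1$ and $\hs \V_{T,2} \hs_1$ (finite under Assumption~(B4)) together with the uniform bound $\hs \F_{\omega}^{XX} \hs_1 \leq \sum_t \hs \mathscr{R}_t \hs_1$ from Assumption~(A3); H\"older's inequality for Schatten norms then guarantees that each remainder induces an operator of uniformly bounded nuclear norm, so that the relevant products and tensor products stay within a single $Conv_C$ class. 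The two points requiring care are to keep the argument orderings $(\tau_1,\tau_2),(\sigma_1,\sigma_2)$ versus $(\tau_1,\sigma_2),(\sigma_1,\tau_2)$ correctly attached to their respective pairings, and to check that each $\Delta^{(T)}$ factor collapses exactly to $T\,\eta(\cdot)$, so that the separation into $O(1)$, $O(T^{-1})$ and $O(T^{-2})$ pieces is precisely the one asserted.
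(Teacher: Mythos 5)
Your proposal is correct and follows essentially the same route as the paper's own proof: the paper likewise expands $\mathbb{E}[ABCD]$ into the three pairings plus the fourth-order cumulant (with $A,B,C,D$ the four discrete Fourier transforms), identifies $\{12\,|\,34\}$ with the product of periodogram means, $\{1234\}$ with $p_{r,s}^{(T)}$, and $\{13\,|\,24\}$, $\{14\,|\,23\}$ with the $\eta(\nu_r-\nu_s)$ and $\eta(\nu_s+\nu_r)$ families, and then evaluates each pairwise expectation via Proposition \ref{2.6} (itself resting on Lemma \ref{SpecDenX}), multiplying out leading terms, cross terms of order $T^{-1}$, and $T^{-2}$ remainders exactly as you describe. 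Your uniformity argument for the $Conv_C$ membership via the nuclear-norm bounds of Lemma \ref{SpecDenX} is also the mechanism the paper relies on (packaged inside Proposition \ref{2.6}).
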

\begin{proof}[Proof]
To simplify notation, let 
\begin{align*}
A = \widetilde{X}_{\nu_s}^{(T)}(\tau_1) ; \qquad B = \widetilde{X}_{-\nu_s}^{(T)}(\sigma_1); \qquad  C = \widetilde{X}_{-\nu_r}^{(T)}(\tau_2); \qquad D = \widetilde{X}_{\nu_r}^{(T)}(\sigma_2).
\end{align*}
We use the formula
\begin{align*}
\mathbb{E}\big[ABCD \big] = \mathbb{E}\big[AB \big] \times \mathbb{E}\big[CD \big] + \mathbb{E}\big[AC \big] \times \mathbb{E}\big[BD \big] + \mathbb{E}\big[AD \big] \times \mathbb{E}\big[BC \big] + {\cum}\big(A,B,C,D\big).
\end{align*}
The term ${\cum}(A,B,C,D)$ will be denoted by  $p^{(T)}_{r,s}$. Applying Proposition \ref{3.1},
\begin{align*}
\mathbb{E}\big[AB\big] \times \mathbb{E}\big[CD\big] =&\mathbb{E} \Big[p_{\nu_s}^{(T)}(\tau_1,\sigma_1) \Big] \times \mathbb{E} \Big[ p_{-\nu_r}^{(T)}(\tau_2,\sigma_2)\Big]\\
\mathbb{E}\big[AC\big] \times \mathbb{E}\big[BD\big] = & \left\{ \eta(\nu_s - \nu_r)f_{\nu_s}^{XX}(\tau_1,\tau_2) + \frac{1}{T} \vartheta_{1,\nu_s,\nu_r}(\tau_1,\tau_2)\right\}  
  \times\left\{ \eta(\nu_s - \nu_r)f_{-\nu_s}^{XX}(\sigma_1,\sigma_2)  + \frac{1}{T} \vartheta_{2,\nu_s,\nu_r}(\sigma_1,\sigma_2)\right\} \\
= & \eta(\nu_s - \nu_r) f_{\nu_s}^{XX} (\tau_1,\tau_2) f_{-\nu_s}^{XX}(\sigma_1,\sigma_2)+  \frac{1}{T} \eta(\nu_s - \nu_r)\Big\{ f_{\nu_s}^{XX}(\tau_1,\tau_2)\vartheta_{2,\nu_s,\nu_r}(\sigma_1,\sigma_2) + \\ & 
 f_{-\nu_s}^{XX}(\sigma_1,\sigma_2) \vartheta_{1,\nu_s,\nu_r}(\tau_1,\tau_2) \Big\}+  \frac{1}{T^2} \vartheta_{1,\nu_s,\nu_r}(\tau_1,\tau_2) \times \vartheta_{2,\nu_s,\nu_r}(\sigma_1,\sigma_2) \\
=&  \eta(\nu_s  - \nu_r) f_{\nu_s}^{XX} (\tau_1,\tau_2) f_{-\nu_s}^{XX}(\sigma_1,\sigma_2)+  \frac{1}{T} \eta(\nu_s - \nu_r) \vartheta_{1,\nu_s,\nu_r,f}(\tau_1,\tau_2)\odot \vartheta_{2,\nu_s,\nu_r,f}(\sigma_1,\sigma_2) + \\ &   
 +  \frac{1}{T^2} \vartheta_{1,\nu_s,\nu_r}(\tau_1,\tau_2) \times \vartheta_{2,\nu_s,\nu_r}(\sigma_1,\sigma_2);\\
\mathbb{E}\big[AD\big] \times \mathbb{E} \big[BC\big] = & \left\{ \eta(\nu_s + \nu_r) f_{\nu_s}^{XX}(\tau_1,\sigma_2) + \frac{1}{T} \vartheta_{3,\nu_s,\nu_r}(\tau_1,\sigma_2)\right\} \ \times  \left\{ \eta(\nu_s + \nu_r) f_{-\nu_s}^{XX}(\sigma_1,\tau_2) +  \frac{1}{ T} \vartheta_{4,\nu_s,\nu_r}(\sigma_1,\tau_2)\right\}\\
=& \eta(\nu_s + \nu_r) f_{\nu_s}^{XX}(\tau_1,\sigma_2) f_{-\nu_s}^{XX}(\sigma_1,\tau_2) + \frac{1}{T}\eta(\nu_s + \nu_r) \Big\{  \vartheta_{4,\nu_s,\nu_r}(\sigma_1,\tau_2) f_{\nu_s}^{XX}(\tau_1,\sigma_2) + \\
& f_{-\nu_s}^{XX}(\sigma_1,\tau_2) \vartheta_{3,\nu_s,\nu_r}(\tau_1,\sigma_2)\Big\}  +  \frac{1}{T^2} \vartheta_{3,\nu_s,\nu_r}(\tau_1,\sigma_2) \times \vartheta_{4,\nu_s,\nu_r}(\sigma_1,\tau_2) \\
=& \eta(\nu_s + \nu_r) f_{\nu_s}^{XX}(\tau_1,\sigma_2) f_{-\nu_s}^{XX}(\sigma_1,\tau_2) + \frac{1}{T}\eta(\nu_s + \nu_r)  \vartheta_{3,\nu_s,\nu_r,f}(\sigma_1,\tau_2) \odot \vartheta_{4,\nu_s,\nu_r,f} (\tau_1,\sigma_2)  \\
& +  \frac{1}{T^2} \vartheta_{3,\nu_s,\nu_r}(\tau_1,\sigma_2) \times \vartheta_{4,\nu_s,\nu_r}(\sigma_1,\tau_2).
\end{align*}
Combining these results completes the proof.
\end{proof}

\begin{proposition}\label{CovDenX} Assume assumptions (A1)-(A3) and (B1)-(B6) in Section 4 and 7 are satisfied, then 
\begin{align*}
&\mathbb{E} \left[ \Big\{f^{(T)}_{\omega}(\tau_1,\sigma_1) - f_{\omega}(\tau_1,\sigma_1) \Big\} \times \Big\{ \overline{f^{(T)}_{\omega}(\tau_2,\sigma_2)} - f_{ -\omega}(\tau_2,\sigma_2) \Big\}\right] = \\
&\qquad O\big(T^{-1} B_T^{-1} \big) \times \Big\{  f_{\omega}^{XX}(\tau_1,\tau_2) f_{-\omega}^{XX}(\sigma_1,\sigma_2)  + 1_{I_T}(\omega)f_{\omega}^{XX}(\tau_1,\sigma_2) f_{-\omega}^{XX}(\tau_2,\sigma_1)  \Big\}\\
&\qquad + O\big(T^{-1} B_T \big)\Big\{\vartheta_1(\tau_1,\tau_2) \odot  \vartheta_2(\sigma_1,\sigma_2) + 1_{I_T}(\omega)\vartheta_3(\tau_1,\sigma_2) \odot \vartheta_4(\sigma_1,\tau_2)\Big\} \\
&\qquad + 1_{I_T}(\omega) \times O\big(T^{-1}  \big) \times \Big\{f_{\omega}^{XX}(\tau_1,\sigma_2)f_{-\omega}^{XX,(1)}(\tau_2,\sigma_1) + f_{-\omega}^{XX}(\tau_2,\sigma_1) f_{\omega}^{XX,(1)}(\tau_1,\sigma_2)  \Big\}  \\
&\qquad +\frac{1}{T^2}  \sum_{s,r=0}^{T-1} W^{(T)}(\omega -\nu_s) W^{(T)}(\omega -\nu_r) p_{r,s}^{(T)}(\tau_1,\sigma_1,\tau_2,\sigma_2),
\end{align*}
where $I_T$ is as in Lemma \ref{SumW} and $\vartheta_i \odot \vartheta_j \in Conv_C\Big(L^2([0,1]^2,\mathbb{C}) \times L^2([0,1]^2,\mathbb{C}) \Big)$.
\end{proposition}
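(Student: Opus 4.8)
The plan is to reduce the left-hand side to the covariance of the smoothed periodogram plus a negligible squared-bias term, and then to evaluate that covariance by inserting the periodogram-covariance expansion of Proposition \ref{CovPerX}. Writing $f^{(T)}_\omega = \frac{1}{T}\sum_{s} W^{(T)}(\omega-\nu_s) p^{(T)}_{\nu_s}$ and recalling that $\overline{p^{(T)}_{\nu_r}} = p^{(T)}_{-\nu_r}$ (since the $X_t$ are real-valued), I would first split
\begin{align*}
\mathbb{E}\big[(f^{(T)}_\omega - f_\omega)(\overline{f^{(T)}_\omega} - f_{-\omega})\big] = \mathrm{Cov}\big(f^{(T)}_\omega, \overline{f^{(T)}_\omega}\big) + \big(\mathbb{E} f^{(T)}_\omega - f_\omega\big)\big(\mathbb{E}\overline{f^{(T)}_\omega} - f_{-\omega}\big),
\end{align*}
where by Proposition \ref{3.1} each factor of the second summand has induced operator of nuclear norm $O\big((TB_T)^{-1}\big)$, so that this squared-bias contribution is $O\big((TB_T)^{-2}\big)$ and is dominated by, hence absorbed into, the stated error terms.

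For the covariance, bilinearity gives
\begin{align*}
\mathrm{Cov}\big(f^{(T)}_\omega, \overline{f^{(T)}_\omega}\big) = \frac{1}{T^2}\sum_{s,r=0}^{T-1} W^{(T)}(\omega-\nu_s) W^{(T)}(\omega-\nu_r)\, \mathrm{Cov}\big(p^{(T)}_{\nu_s}, p^{(T)}_{-\nu_r}\big),
\end{align*}
and I would substitute Proposition \ref{CovPerX}. The mean-product term $\mathbb{E}[p^{(T)}_{\nu_s}]\,\mathbb{E}[p^{(T)}_{-\nu_r}]$ appearing there equals, after weighting and summing, exactly $\mathbb{E}f^{(T)}_\omega\,\mathbb{E}\overline{f^{(T)}_\omega}$, so it cancels against the subtracted product of means and leaves only the genuine fourth-order cumulant term $p^{(T)}_{r,s}$ together with the remaining $\eta$-weighted product-kernel terms of Proposition \ref{CovPerX}. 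The cumulant term reproduces verbatim the last summand $\frac{1}{T^2}\sum_{s,r} W^{(T)}(\omega-\nu_s)W^{(T)}(\omega-\nu_r)p^{(T)}_{r,s}$ of the claim, so nothing further is required for it at this stage.

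The heart of the argument is the treatment of these $\eta$-weighted kernels. The factors $\eta(\nu_r-\nu_s)$ and $\eta(\nu_s+\nu_r)$ are nonzero only on the lattice diagonal $r=s$ and anti-diagonal $r\equiv-s\pmod{T}$, respectively, collapsing each double sum to a single sum. On the diagonal I would replace $\nu_s$ by $u_s$ and Taylor expand $f^{XX}_{u_s}(\tau_1,\tau_2)\,f^{XX}_{-u_s}(\sigma_1,\sigma_2)$ about $\pm\omega$ via Lemma \ref{SchNo12}; Lemma \ref{SumW}(C) then evaluates $\frac{1}{T}\sum_s \{W^{(T)}(\omega-u_s)\}^2 (u_s-\omega)^k$, the $k=0$ term producing the leading $O(T^{-1}B_T^{-1})\,f^{XX}_\omega(\tau_1,\tau_2)f^{XX}_{-\omega}(\sigma_1,\sigma_2)$ while the higher-order remainders, uniformly nuclear-norm bounded by Lemma \ref{SchNo12}, collect into the generic $O(T^{-1}B_T)\,\vartheta_1\odot\vartheta_2$ form. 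On the anti-diagonal the weight becomes $W^{(T)}(\omega-u_s)W^{(T)}(\omega+u_s)$, handled by Lemma \ref{SumW}(D): the $j=0$ term gives $1_{I_T}(\omega)\,O(T^{-1}B_T^{-1})\,f^{XX}_\omega(\tau_1,\sigma_2)f^{XX}_{-\omega}(\sigma_1,\tau_2)$, the $j=1$ term gives the $1_{I_T}(\omega)\,O(T^{-1})$ first-derivative summands, and the $j=2$ remainder gives $1_{I_T}(\omega)\,O(T^{-1}B_T)\,\vartheta_3\odot\vartheta_4$. The already-$T^{-1}$-suppressed corrections in Proposition \ref{CovPerX} (those carrying an explicit factor $1/T$ or $1/T^2$) are of lower order and fall into the same $Conv_C$ buckets.

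The main obstacle, and the step demanding the most care, is the anti-diagonal analysis and the correct appearance of the indicator $1_{I_T}(\omega)$. One must verify that away from $I_T = [0,B_T]\cup[\pi-B_T,\pi+B_T]\cup[2\pi-B_T,2\pi]$ the supports of $W^{(T)}(\omega-\cdot)$ and $W^{(T)}(\omega+\cdot)$ are disjoint, so those terms vanish identically, whereas on $I_T$ the number of contributing indices $s$ is only $O(TB_T)$; this is precisely the content of Lemma \ref{SumW}(D) and is what prevents the anti-diagonal from contaminating every frequency. A secondary but persistent difficulty is maintaining the $\odot$ (i.e. $Conv_C$) structure through the Taylor products and integral remainders: each such kernel must be exhibited as a finite convex combination of kernel products whose induced operators have Schatten-$1$ norm bounded by a single constant uniform in $\omega$, $s$, $r$ and $T$, which is exactly what the uniform nuclear-norm bounds of Lemma \ref{SchNo12} supply.
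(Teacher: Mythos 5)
Your proposal is correct and follows essentially the same route as the paper: the paper likewise expands $\mathbb{E}\big[f^{(T)}_{\omega}\,\overline{f^{(T)}_{\omega}}\big]$ through Proposition \ref{CovPerX}, collapses the $\eta$-weighted diagonal and anti-diagonal sums via the Taylor expansions of Lemma \ref{SchNo12} and the kernel sums of Lemma \ref{SumW}, keeps the fourth-order cumulant term verbatim, and then applies the identity $\mathbb{E}[(U-a)(V-b)]=\mathbb{E}[(U-u)(V-v)]+(a-u)(b-v)$, which is exactly your covariance-plus-squared-bias split performed at the end rather than at the start. The only loose point in your write-up, namely the claim that the $O\big(T^{-2}B_T^{-2}\big)$ bias product is dominated by the stated error terms (it is $o\big(T^{-1}B_T^{-1}\big)$, but having generic kernels it fits the $O\big(T^{-1}B_T\big)$ bucket only when $\gamma\le 1/3$), is glossed over in precisely the same way in the paper's own proof, so it does not distinguish your argument from theirs.
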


\begin{proof}We use the same notation $A,B,C,D$ as in the proof of Proposition \ref{CovPerX}. By definition of $f_{\omega}^{(T)}$,
\begin{align*}
\mathbb{E} \left[ f^{(T)}_{\omega}(\tau_1,\sigma_1)  \times \overline{f^{(T)}_{\omega}(\tau_2,\sigma_2)} \right] = \Big(\frac{2\pi}{ T}\Big)^2 \sum_{s,r=0}^{T-1} W^{(T)}(\omega -\nu_s) W^{(T)}(\omega -\nu_r) \times \mathbb{E} \Big[p_{\nu_s}^{(T)}(\tau_1,\tau_2)\times p_{-\nu_r}^{(T)}(\sigma_1,\sigma_2) \Big] .
\end{align*}
We use Proposition \ref{CovPerX} to decompose
 $\mathbb{E} \Big[p_{\nu_s}^{(T)}(\tau_1,\tau_2)\times p_{-\nu_r}^{(T)}(\sigma_1,\sigma_2) \Big]$ and treat each part separately.  Consider first $\mathbb{E}[AB]\times \mathbb{E}[CD]$, given by
 \begin{align*}
& \Big(\frac{2\pi}{ T}\Big)^2 \sum_{s,r=0}^{T-1} W^{(T)}(\omega -\nu_s) W^{(T)}(\omega -\nu_r) \times \mathbb{E}\Big[p_{\nu_s}^{(T)}(\tau_1,\tau_2) \Big] \times \mathbb{E}\Big[p_{-\nu_r}^{(T)}(\sigma_1,\sigma_2) \Big]\\
&= \left\{\frac{2\pi}{T} \sum_{s=0}^{T-1} W^{(T)}(\omega -\nu_s)   \mathbb{E}\Big[p_{\nu_s}^{(T)}(\tau_1,\sigma_1)\Big]  \right\} \times \left\{\frac{2\pi}{T} \sum_{r=0}^{T-1} W^{(T)}(\omega -\nu_r)   \mathbb{E}\Big[p_{-\nu_r}^{(T)}(\tau_2,\sigma_2)\Big] \right\} \\
& = \mathbb{E} f_{\omega}^{(T)}(\tau_1,\sigma_1) \times \mathbb{E} \overline{f_{\omega}^{(T)}(\tau_2,\sigma_2)} .
\end{align*}
Note that $\eta(x) = 0$ when $x \neq 2k\pi$. Next, consider $\mathbb{E}[AC] \times \mathbb{E}[BD]$ which is
\begin{align*}
&\Big(\frac{2\pi}{ T}\Big)^2 \sum_{s,r=0}^{T-1} W^{(T)}(\omega -\nu_s) W^{(T)}(\omega -\nu_r) \times 
\Big[\eta(\nu_s - \nu_r) f_{\nu_s}^{XX} (\tau_1,\tau_2) f_{-\nu_s}^{XX}(\sigma_1,\sigma_2)+ \\
& \frac{1}{T} \eta(\nu_s - \nu_r) \vartheta_{1,\nu_s,\nu_r,f}(\tau_1,\tau_2) \odot\vartheta_{2,\nu_s,\nu_r,f}(\sigma_1,\sigma_2) + \frac{1}{T^2} \vartheta_{1,\nu_s,\nu_r}(\tau_1,\tau_2) \vartheta_{2,\nu_s,\nu_r}(\sigma_1,\sigma_2)\Big] \\
&= \Big(\frac{2\pi}{ T}\Big)^2 \sum_{s=0}^{T-1} W^{(T)}(\omega -\nu_s) W^{(T)}(\omega - \nu_s) \times f_{\nu_s}^{XX} (\tau_1,\tau_2) f_{-\nu_s}^{XX}(\sigma_1,\sigma_2)  + B_T^{-2} T^{-2} \vartheta_{1,f}(\tau_1,\tau_2) \odot \vartheta_{2,f}(\sigma_1,\sigma_2),
\end{align*}
where
\begin{align*}
 &T^{-2} B_T^{-2} \times \vartheta_{1,f}(\tau_1,\tau_2) \odot \vartheta_{2,f}(\sigma_1,\sigma_2) = \\
 & \Big(\frac{2\pi}{ T}\Big)^2 \sum_{s,r=0}^{T-1} W^{(T)}(\omega -\nu_s) W^{(T)}(\omega - \nu_s) \times  \frac{1}{T}  \vartheta_{1,\nu_s,\nu_r,f}(\tau_1,\tau_2) \odot \vartheta_{2,\nu_s,\nu_r,f}(\sigma_1,\sigma_2)\\
 &+ \Big(\frac{2\pi}{ T}\Big)^2 \sum_{s,r=0}^{T-1} W^{(T)}(\omega -\nu_s) W^{(T)}(\omega -\nu_r) \times \frac{1}{T^2}  \vartheta_{1,\nu_s,\nu_r}(\tau_1,\tau_2)  \vartheta_{2,\nu_s,\nu_r}(\sigma_1,\sigma_2).
\end{align*}
For the term containing $f_{\nu_s}^{XX} (\tau_1,\tau_2) f_{-\nu_s}^{XX}(\sigma_1,\sigma_2)$, we replace $\nu_s$ by $u_s$ and use a Taylor expansion as in Lemma \ref{SchNo12}
\begin{align*}
f_{u_s}^{XX}(\tau_1,\tau_2) =& f_{\omega}^{XX}(\tau_1,\tau_2)  + \sum_{j=1}^{p-1} \frac{(u_s -\omega)^j}{j!}  \frac{\partial^j f_{\alpha}^{XX}(\tau_1,\tau_2)}{\partial \alpha^j}\Big|_{\alpha = \omega}  + (u_s -\omega)^p g_{2,u_s,\omega}(\tau_1,\tau_2) \\
f_{-u_s}^{XX}(\sigma_1,\sigma_2)=& f_{-\omega}^{XX}(\sigma_1,\sigma_2) + \sum_{j=1}^{p-1} \frac{(\omega-u_s)^j}{j!}  \frac{\partial^j f_{-\alpha}^{XX}(\sigma_1,\sigma_2)}{\partial \alpha^j}\Big|_{\alpha = \omega}  + (\omega -u_s )^p g_{2,u_s,\omega}(\sigma_1,\sigma_2).
\end{align*}
Their product becomes
\begin{align*}
f_{u_s}^{XX}(\tau_1,\tau_2) \times f_{-u_s}^{XX}(\sigma_1,\sigma_2) =& f_{\omega}^{XX}(\tau_1,\tau_2) f_{-\omega}^{XX}(\sigma_1,\sigma_2)  + (\omega -u_s) \Big\{f_{\omega}^{XX}(\tau_1,\tau_2) f^{XX,(1)}_{-\omega}(\sigma_1,\sigma_2) - \\ 
&f_{-\omega}^{XX}(\sigma_1,\sigma_2)f^{XX,(1)}_{\omega}(\tau_1,\tau_2)\Big\}  
+ (\omega -u_s)^2 \times \vartheta_{1,u_s,g}(\tau_1,\tau_2) \odot \vartheta_{2,u_s,g}(\sigma_1,\sigma_2).
\end{align*} 
Taking the sum over $s$ and using Lemma \ref{SumW}, now gives
\begin{align*}
 &\Big(\frac{2\pi}{ T}\Big)^2 \sum_{s=0}^{T-1} W^{(T)}(\omega -u_s) W^{(T)}(\omega - u_s ) \times f_{u_s}^{XX} (\tau_1,\tau_2) f_{-u_s}^{XX}(\sigma_1,\sigma_2)  = \\
 & \Big(\frac{2\pi}{ T}\Big)^2 \sum_{s=0}^{T-1} W^{(T)}(\omega -u_s) W^{(T)}(\omega -u_s ) \times f_{\omega}^{XX}(\tau_1,\tau_2) f_{-\omega}^{XX}(\sigma_1,\sigma_2) +  \\
&\Big(\frac{2\pi}{ T}\Big)^2 \sum_{s=0}^{T-1} W^{(T)}(\omega -u_s) W^{(T)}(\omega -u_s ) \times  (\omega -u_s) \times \Big\{f_{\omega}^{XX}(\tau_1,\tau_2)f^{XX,(1)}_{-\omega}(\sigma_1,\sigma_2)  - f^{XX,(1)}_{\omega}(\tau_1,\tau_2)f_{-\omega}^{XX}(\sigma_1,\sigma_2) \Big\} + \\
&
 \Big(\frac{2\pi}{ T}\Big)^2 \sum_{s=0}^{T-1} W^{(T)}(\omega -u_s) W^{(T)}(\omega -u_s ) \times (\omega -u_s) ^2 \times  \vartheta_{1,\nu_s,g}(\tau_1,\tau_2) \odot \vartheta_{2,\nu_s,g}(\sigma_1,\sigma_2) \\
& = O\big(T^{-1} B_T^{-1} \big) \times f_{\omega}^{XX}(\tau_1,\tau_2) f_{-\omega}^{XX}(\sigma_1,\sigma_2)   + O\big(T^{-1} B_T \big) \times \vartheta_{1,g}(\tau_1,\tau_2) \odot \vartheta_{2,g}(\sigma_1,\sigma_2) .
\end{align*}
Turning to  $\mathbb{E}[AD]\times \mathbb{E}[BC]$, similar manipulations yield
\begin{align*}
&\Big(\frac{2\pi}{ T}\Big)^2 \sum_{s,r=0}^{T-1} W^{(T)}(\omega -\nu_s) W^{(T)}(\omega -\nu_r) \times
 \Big[\eta(\nu_s + \nu_r) f_{\nu_s}^{XX}(\tau_1,\sigma_2) f_{-\nu_s}^{XX}(\sigma_1,\tau_2) + \\
 & \qquad \frac{1}{T}\eta(\nu_s + \nu_r) \vartheta_{3,\nu_s,\nu_r,f}(\tau_1,\sigma_2) \odot \vartheta_{4,\nu_s,\nu_r,f}(\sigma_1,\tau_2) + \frac{1}{T^2} \vartheta_{3,\nu_s,\nu_r}(\tau_1,\sigma_2) \times \vartheta_{4,\nu_s,\nu_r}(\sigma_1,\tau_2) \Big]  \\
&=  \Big(\frac{2\pi}{ T}\Big)^2 \sum_{s=0}^{T-1} W^{(T)}(\omega -\nu_s) W^{(T)}(\omega + \nu_s) \times \left[ f_{\nu_s}^{XX}(\tau_1,\sigma_2) f_{-\nu_s}^{XX}(\sigma_1,\tau_2) \right] + B_T^{-2} T^{-2} \vartheta_{3,f}(\tau_1,\sigma_2) \odot \vartheta_{4,f}(\sigma_1,\tau_2) .
\end{align*}
Again, replacing $\nu_s$ by $u_s$, using a Taylor expansion, and employing Lemma \ref{SumW}, we have
\begin{align*}
\frac{1}{T} \sum_{s=0}^{T-1} \big| W^{(T)}(\omega -u_s) \big| \times \big| W^{(T)}(\omega + u_s)\big| \times \big|\omega - u_s\big|^j & = 1_{I_T}(\omega) O\big(B_T^{j-1} \big).
\end{align*}
Then 
\begin{align*}
 &\Big(\frac{2\pi}{ T}\Big)^2 \sum_{s=0}^{T-1} W^{(T)}(\omega -u_s) W^{(T)}(\omega  + u_s) \times f_{u_s}^{XX} (\tau_1,\sigma_2) f_{-u_s}^{XX}(\sigma_1,\tau_2)  = \\
 &\qquad \Big(\frac{2\pi}{ T}\Big)^2 \sum_{s=0}^{T-1} W^{(T)}(\omega -u_s) W^{(T)}(\omega + u_s ) \times f_{\omega}^{XX}(\tau_1,\sigma_2) f_{-\omega}^{XX}(\sigma_1,\tau_2) +  \\
&\qquad \Big(\frac{2\pi}{ T}\Big)^2 \sum_{s=0}^{T-1} W^{(T)}(\omega -u_s) W^{(T)}(\omega  + u_s ) \times  (\omega -u_s) \times \big\{f_{\omega}^{XX}(\tau_1,\sigma_2)f^{XX,(1)}_{-\omega}(\sigma_1,\tau_2)  - f^{XX,(1)}_{\omega}(\tau_1,\sigma_2)f_{-\omega}^{XX}(\sigma_1,\tau_2) \big\}  + \\
&\qquad \Big(\frac{2\pi}{ T}\Big)^2 \sum_{s=0}^{T-1} W^{(T)}(\omega -u_s) W^{(T)}(\omega + u_s ) \times (\omega -u_s) ^2 \times  \vartheta_{3,\nu_s,g}(\tau_1,\sigma_2) \odot \vartheta_{4,\nu_s,g}(\sigma_1,\tau_2) \\
&\qquad =  1_{I_T}(\omega) O\big(T^{-1} B_T^{-1} \big) \times f_{\omega}^{XX}(\tau_1,\sigma_2) f_{-\omega}^{XX}(\sigma_1,\tau_2) + 1_{I_T}(\omega)  O\big(T^{-1}  \big)   \big\{f_{\omega}^{XX}(\tau_1,\sigma_2) f_{-\omega}^{XX,(1)}(\sigma_1,\tau_2) + \\
&\qquad \qquad 1_{I_T}(\omega)  f_{-\omega }^{XX}(\sigma_1,\tau_2)f_{\omega}^{XX,(1)}(\tau_1,\sigma_2)\big\}  + 1_{I_T}(\omega)  O\big(T^{-1} B_T \big) \times \vartheta_3(\tau_1,\sigma_2) \odot \vartheta_4(\sigma_1,\tau_2).
\end{align*}
Finally, we turn to ${\cum}(A,B,C,D) $, which consists in
\begin{align*}
\frac{1}{T^2} \sum_{r,s=0}^{T-1} W^{(T)}(\omega -\nu_s) W^{(T)}(\omega - \nu_r ) p_{r,s}^{(T)}(\tau_1,\sigma_1,\tau_2,\sigma_2).
\end{align*}
For random variables $U$ and $V$ with $\mathbb{E}U = u, \mathbb{E} v = v$ and constants $a$ and $b$, it holds that
\begin{align*}
 \mathbb{E} \big[ (U -a) \times (V-b) \big]   & = \mathbb{E} [UV] - av - bu + ab = \mathbb{E} \big[ (U-u)(V-v) \big] + (a-u)(b-v) .
\end{align*}
We use this formula with $U = f_{\omega}^{(T)}(\tau_1,\sigma_1)$, $V = f_{-\omega}^{(T)}(\tau_2,\sigma_2)$, $a = f_{\omega}(\tau_1,\sigma_1)$, and $b = f_{-\omega}(\tau_2,\sigma_2)$ to obtain
\begin{align*}
&\mathbb{E}\left[ \Big\{f^{(T)}_{\omega}(\tau_1,\sigma_1) - f_{\omega}^{XX}(\tau_1,\sigma_1) \Big\} \times \Big\{f^{(T)}_{-\omega}(\tau_2,\sigma_2) - f_{-\omega}^{XX}(\tau_2,\sigma_2) \Big\}\right]  = \\
& \qquad \qquad \mathbb{E} \left[ \Big\{ f^{(T)}_{\omega}(\tau_1,\sigma_1) - \mathbb{E} f_{\omega}^{(T)}(\tau_1,\sigma_1)\Big\} \times \Big\{ f^{(T)}_{2\pi -\omega}(\tau_2,\sigma_2) - \mathbb{E} f_{2\pi -\omega}^{(T)}(\tau_2,\sigma_2)\Big\}\right] +\\
&\qquad \qquad \Big\{\mathbb{E} f_{\omega}^{(T)}(\tau_1,\sigma_1) -  f_{\omega}^{XX}(\tau_1,\sigma_1)\Big\} \times \Big\{\mathbb{E} f_{\omega}^{(T)}(\tau_2,\sigma_2) -  f_{\omega}^{XX}(\tau_2,\sigma_2)\Big\}  \\
& = O\big(T^{-1} B_T^{-1} \big) \times \Big\{  f_{\omega}^{XX}(\tau_1,\tau_2) f_{-\omega}^{XX}(\sigma_1,\sigma_2)  + f_{\omega}^{XX}(\tau_1,\sigma_2) f_{-\omega}^{XX}(\tau_2,\sigma_1)  \Big\}+ \\
&\qquad  O\big(T^{-1} B_T \big)\Big\{\vartheta_1(\tau_1,\tau_2) \odot \vartheta_2(\sigma_1,\sigma_2) + \vartheta_3(\tau_1,\sigma_2) \odot \vartheta_4(\sigma_1,\tau_2)\Big\} \\
&\qquad + 1_{I_T}(\omega) \times O\big(T^{-1}  \big) \times \Big\{f_{\omega}^{XX}(\tau_1,\sigma_2)f_{-\omega}^{XX,(1)}(\tau_2,\sigma_1) + f_{-\omega}^{XX}(\tau_2,\sigma_1) f_{\omega}^{XX,(1)}(\tau_1,\sigma_2)  \Big\}  \\
& \qquad + \frac{1}{T^2} \sum_{r,s=0}^{T-1} W^{(T)}(\omega -\nu_s) W^{(T)}(\omega - \nu_r ) p_{r,s}^{(T)}(\tau_1,\sigma_1,\tau_2,\sigma_2).
\end{align*}
\end{proof}

\begin{proposition}\label{FhatXX} Assume assumptions (A1)-(A3) and (B1)-(B6) in Section 4 and 7 are satisfied, then there exists an universal constant $C$ such that
\begin{align*}
\mathbb{E} \hs \widehat{\F}_{\omega,T}^{XX} -\F_{\omega}^{XX} \hs_2^2 \leq C\times T^{-1} B_T^{-1} .
\end{align*} 
\end{proposition}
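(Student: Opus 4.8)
The plan is to convert the Hilbert--Schmidt error into a mean integrated squared error of the spectral density \emph{kernel} and then to extract the rate directly from the covariance expansion of Proposition \ref{CovDenX}. Since the integral operator induced by a kernel $g\in L^2([0,1]^2,\mathbb{C})$ has Hilbert--Schmidt norm equal to the $L^2$-norm of $g$, and $\widehat{\F}^{XX}_{\omega,T}-\F^{XX}_{\omega}$ is induced by $f^{(T)}_{\omega}-f^{XX}_{\omega}$, Lemma \ref{SCoij}(C) gives
\[
\mathbb{E}\hs\widehat{\F}^{XX}_{\omega,T}-\F^{XX}_{\omega}\hs_2^2
= \int_{[0,1]^2}\mathbb{E}\big|f^{(T)}_{\omega}(\tau,\sigma)-f^{XX}_{\omega}(\tau,\sigma)\big|^2\,d\tau\,d\sigma .
\]
Because each $r^X_t$ is real we have $f^{XX}_{-\omega}=\overline{f^{XX}_{\omega}}$, so the integrand is exactly the diagonal $(\tau_1,\sigma_1)=(\tau_2,\sigma_2)=(\tau,\sigma)$ of the covariance kernel evaluated in Proposition \ref{CovDenX}. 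The whole proof therefore reduces to substituting that expansion and integrating each of its summands along the diagonal.

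First I would isolate the leading summand. Setting the arguments equal in $f^{XX}_{\omega}(\tau_1,\tau_2)f^{XX}_{-\omega}(\sigma_1,\sigma_2)$ and integrating gives $\mathrm{tr}(\F^{XX}_{\omega})\,\mathrm{tr}(\F^{XX}_{-\omega})$, while the companion $f^{XX}_{\omega}(\tau_1,\sigma_2)f^{XX}_{-\omega}(\tau_2,\sigma_1)$ integrates to $\hs\F^{XX}_{\omega}\hs_2^2$; both are bounded, uniformly in $\omega$, by $\big(\tfrac{1}{2\pi}\sum_t\hs\mathscr{R}^X_t\hs_1\big)^2$ via assumption (A3) and $\hs\cdot\hs_2\le\hs\cdot\hs_1$. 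With the prefactor $O(T^{-1}B_T^{-1})$ this summand contributes exactly the claimed order $O(T^{-1}B_T^{-1})$.

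It then remains to check that every remaining summand of Proposition \ref{CovDenX} is of strictly smaller order, uniformly in $\omega$. For an $O(T^{-1}B_T)$ term, diagonal integration of a generic $\vartheta_1\odot\vartheta_2\in Conv_C$ produces $\sum_j\pi_j\,\mathrm{tr}(F_j)\mathrm{tr}(H_j)$ or $\sum_j\pi_j\,\mathrm{tr}(F_jH_j)$, each at most $C^2$ by $|\mathrm{tr}(\cdot)|\le\hs\cdot\hs_1$ together with H\"older for Schatten norms; these terms are $O(T^{-1}B_T)=o(T^{-1}B_T^{-1})$. The $1_{I_T}(\omega)O(T^{-1})$ terms are handled the same way, the kernels $f^{XX,(1)}_{\pm\omega}$ being uniformly nuclear by Lemma \ref{SchNo12}, and give $O(T^{-1})$. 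Finally, the fourth-order cumulant summand integrates on the diagonal to $\frac{1}{T^2}\sum_{s,r}W^{(T)}(\omega-\nu_s)W^{(T)}(\omega-\nu_r)\,\mathrm{tr}(\mathscr{P}^{(T)}_{r,s})$; using $|\mathrm{tr}(\mathscr{P}^{(T)}_{r,s})|\le\hs\mathscr{P}^{(T)}_{r,s}\hs_1=O(T^{-1})$ from Proposition \ref{4cumS1}(A) and $\frac{1}{T}\sum_s|W^{(T)}(\omega-\nu_s)|=O(1)$ (a Riemann-sum estimate of the type in Lemma \ref{SumW}), this is $O(T^{-1})$. Summing the four bounds yields the uniform estimate $C\,T^{-1}B_T^{-1}$.

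I expect the main obstacle to lie not in any individual estimate but in the bookkeeping of the last step: one must ensure that each diagonal integral is controlled by a \emph{Schatten-$1$} norm rather than a Schatten-$2$ or supremum norm, since it is precisely the nuclear-norm bounds --- $\hs\F^{XX}_{\omega}\hs_1$ via (A3), the $Conv_C$ structure of the $\vartheta\odot\vartheta$ factors, and $\hs\mathscr{P}^{(T)}_{r,s}\hs_1=O(T^{-1})$ --- that hold uniformly in $\omega$. It is worth noting in passing that the squared bias $\hs\mathbb{E}\widehat{\F}^{XX}_{\omega,T}-\F^{XX}_{\omega}\hs_2^2=O(T^{-2}B_T^{-2})$ implied by Proposition \ref{3.1} is already incorporated into Proposition \ref{CovDenX} and is of smaller order than the variance term $T^{-1}B_T^{-1}$ (since $TB_T\to\infty$), so no separate treatment of the bias is needed.
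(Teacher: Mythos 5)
Your proof is correct and its substance coincides with the paper's: both reduce everything to the covariance expansion of Proposition \ref{CovDenX}, bound each of its summands by nuclear-norm quantities that are uniform in $\omega$, and handle the fourth-order cumulant term through Proposition \ref{4cumS1}(A). The only real difference is the mechanism for passing from kernel bounds to the operator bound: the paper expands in the eigenbasis $\{\varphi_i^{\omega}\}$ via Lemma \ref{SCoij}(C) and then invokes Lemma \ref{VPI} to dominate the resulting double sums by Schatten-1 norms, whereas you use Parseval (the Hilbert--Schmidt norm of an integral operator equals the $L^2$-norm of its kernel) and integrate the expansion along the diagonal, bounding diagonal integrals by traces. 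These are equivalent in substance --- the sums of diagonal matrix elements $\sum_i \xi_{ii}$ controlled in Lemma \ref{VPI} are exactly the traces you compute --- and your route is arguably more direct here, since no weights $(\lambda_j^{\omega}+\zeta_T)^{-2}$ occur in this proposition, so the full strength of Lemma \ref{VPI} is not needed. The one point requiring care in your version is that restricting the kernel identity of Proposition \ref{CovDenX} to the measure-zero diagonal, and identifying $\int \vartheta(\tau,\tau)\,d\tau$ with $\mathrm{tr}(\V)$, needs the pointwise/continuity structure provided by (B3) (and Mercer-type arguments for the positive terms); the basis-sum formulation of Lemma \ref{VPI} sidesteps any pointwise evaluation of kernels, which is presumably why the paper adopts it, while your $\mathrm{tr}(F_jH_j)$-type bounds via Cauchy--Schwarz are rigorous as stated. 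Your closing observation that the bias is already absorbed into Proposition \ref{CovDenX} and contributes only $O(T^{-2}B_T^{-2})=o(T^{-1}B_T^{-1})$ is also correct.
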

\begin{proof}[Proof of Proposition \ref{FhatXX}]
By part C of Lemma \ref{SCoij},
\begin{align*}
&\mathbb{E} \hs \widehat{\F}_{\omega,T}^{XX} -\F_{\omega}^{XX} \hs_2^2  = \\
&\sum_{i,j}\mathbb{E}\left[ \Big\{f^{(T)}_{\omega}(\tau_1,\sigma_1) - f_{\omega}^{XX}(\tau_1,\sigma_1) \Big\} \times \Big\{f^{(T)}_{-\omega}(\tau_2,\sigma_2) - f_{-\omega}^{XX}(\tau_2,\sigma_2) \Big\}\right]   \overline{\varphi_i^{\omega}(\tau_1)} \varphi_j^{\omega}(\sigma_1) \varphi_i^{\omega}(\tau_2) \overline{\varphi_j^{\omega}(\sigma_2)} d\tau_1 d\sigma_1 d\tau_2 d\sigma_2.
\end{align*}
We first decompose the  right hand side by Proposition \ref{CovDenX}, then we apply Lemma \ref{VPI} and follow the proof of Proposition \ref{4cumS1} to obtain the upper bound $C T^{-1}B_T^{-1}$.
\end{proof}

\begin{proposition}\label{RidOp} Assume assumptions (A1)-(A3) and (B1)-(B6) in Section 4 and 7 are satisfied, 
the operator $\widehat{\F}_{\omega,T}^{XX} + \zeta_T\mathscr{I}$ is strictly positive definite on an event $G_T$ satisfying $\mathbb{P}[G_T]\stackrel{T\rightarrow\infty}{\longrightarrow}1$.
\end{proposition}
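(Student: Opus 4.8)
The plan is to reduce strict positive-definiteness of $\widehat{\F}^{XX}_{\omega,T} + \zeta_T\mathscr{I}$ to a uniform bound on the estimation error of the spectral density operator, and then to establish that bound with high probability. First I would record two structural facts: $\widehat{\F}^{XX}_{\omega,T}$ is self-adjoint (its kernel is a real-weighted sum of the Hermitian periodogram kernels $\widetilde{X}_{\nu_s,T}(\tau)\overline{\widetilde{X}_{\nu_s,T}(\sigma)}$), while the estimand $\F^{XX}_\omega$ is self-adjoint and positive semi-definite, being a genuine spectral density operator. Writing $\Delta_\omega = \widehat{\F}^{XX}_{\omega,T} - \F^{XX}_\omega$, for any $h$ with $\|h\|=1$ we then have
\begin{align*}
\big\langle (\widehat{\F}^{XX}_{\omega,T} + \zeta_T\mathscr{I}) h, h\big\rangle = \big\langle \F^{XX}_\omega h, h\big\rangle + \big\langle \Delta_\omega h, h\big\rangle + \zeta_T \geq \zeta_T - \hs \Delta_\omega\hs_\infty,
\end{align*}
using $\langle \F^{XX}_\omega h, h\rangle \geq 0$ and $|\langle\Delta_\omega h,h\rangle|\leq\hs\Delta_\omega\hs_\infty$. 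Hence on the event
\begin{align*}
G_T := \Big\{\sup_{\omega\in[0,2\pi]} \hs \Delta_\omega\hs_\infty \leq \tfrac{1}{2}\zeta_T\Big\}
\end{align*}
the smallest eigenvalue of $\widehat{\F}^{XX}_{\omega,T} + \zeta_T\mathscr{I}$ is at least $\zeta_T/2>0$ for every $\omega$, so the operator is strictly positive definite there (this is also exactly the event under which the perturbation series $[\mathscr{I}+\Delta V^+]^{-1}$ used in bounding $\scrS_3$ converges). It remains to prove $\mathbb{P}[G_T]\to 1$.

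Since $\hs\cdot\hs_\infty\leq\hs\cdot\hs_2$, it suffices to control $\sup_\omega\hs\Delta_\omega\hs_2$. At a fixed frequency, Proposition \ref{FhatXX} gives $\mathbb{E}\hs\Delta_\omega\hs_2^2 \leq C\,T^{-1}B_T^{-1}$, and the exponent computation at the start of Section \ref{proof_section} shows that under the admissible range $\gamma<(2\beta-\alpha)/(\alpha+2\beta)$ one has $T^{-1}B_T^{-1}\zeta_T^{-2}=T^{(\alpha-1)/(\alpha+2\beta)}T^{-\gamma}\to 0$ at a \emph{strictly} polynomial rate. Thus the pointwise ratio $\hs\Delta_\omega\hs_2/\zeta_T$ already vanishes; the task is to upgrade this to a statement uniform in $\omega$ without destroying the vanishing ratio.

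To uniformize I would proceed either by citation or by discretization. The direct route is to invoke a uniform convergence rate for the smoothed periodogram of the type established in \citet{panaretos2013fourier}, namely $\sup_\omega\hs\Delta_\omega\hs_\infty = O_P\big(\sqrt{\log T/(TB_T)}\big)$, and then observe that this rate is $o(\zeta_T)$, since the polynomial margin in $(TB_T)^{-1/2}\zeta_T^{-1}$ comfortably absorbs the sub-polynomial $\sqrt{\log T}$ factor. The self-contained route is a grid argument: the oscillation of $\omega\mapsto\widehat{\F}^{XX}_{\omega,T}$ in operator norm is governed by the total variation of $W^{(T)}$ (of order $B_T^{-1}$) together with $\tfrac{1}{T}\sum_{s}\hs\mathscr{P}^{XX}_{\nu_s,T}\hs_1=\tfrac{1}{T}\sum_{t=0}^{T-1}\|X_t\|^2=O_P(1)$ (by the discrete Parseval identity and (A2)); placing a polynomially fine grid $\{\omega_m\}$ so that the discretization error stays below $\zeta_T/4$, a union bound combined with Markov's inequality applied to $\mathbb{E}\hs\Delta_{\omega_m}\hs_2^2$ yields $\mathbb{P}[G_T^c]\to 0$, provided the grid-cardinality loss is reabsorbed by the strict polynomial margin above (using higher-moment control of $\hs\Delta_\omega\hs_2$ afforded by the Brillinger conditions (B4) and (B6) if extra room is needed).

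The main obstacle I anticipate is precisely this uniformization: because the pointwise error is only \emph{barely} smaller than $\zeta_T$ — the exponent gap shrinks to zero as $\gamma$ approaches its upper limit $(2\beta-\alpha)/(\alpha+2\beta)$ — the logarithmic or polynomial price of passing to a supremum over $\omega$ must be shown not to swallow that gap. Controlling the stochastic modulus of continuity of $\widehat{\F}^{XX}_{\omega,T}$ sharply enough (or importing a sufficiently tight uniform rate) is the crux; the positivity bookkeeping and the reduction to $\sup_\omega\hs\Delta_\omega\hs_\infty<\zeta_T$ are otherwise routine.
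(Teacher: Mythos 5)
Your core argument is the paper's: take the mean-square bound $\mathbb{E}\hs\widehat{\F}^{XX}_{\omega,T}-\F^{XX}_{\omega}\hs_2^2\leq CT^{-1}B_T^{-1}$ of Proposition \ref{FhatXX} (with $C$ uniform in $\omega$), note that it is polynomially smaller than $\zeta_T^2$ precisely because $\gamma<(2\beta-\alpha)/(\alpha+2\beta)$, define $G_T$ by requiring the estimation error to lie below (a slack times) this threshold so that Markov's inequality gives $\mathbb{P}(G_T)\to1$, and conclude positivity from the fact that $\F^{XX}_{\omega}$ is positive semi-definite, so a perturbation smaller than $\zeta_T$ cannot push the spectrum of $\widehat{\F}^{XX}_{\omega,T}+\zeta_T\mathscr{I}$ to zero or below. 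Whether that last step is phrased via quadratic forms and the operator norm (your version) or via the eigenvalue inequality $|\widehat\lambda^{\omega}_{j,T}-\lambda^{\omega}_j|\leq\hs\widehat{\F}^{XX}_{\omega,T}-\F^{XX}_{\omega}\hs_2$ (the paper's version) is immaterial. (One algebra slip: $T^{-1}B_T^{-1}\zeta_T^{-2}=T^{-(2\beta-\alpha)/(\alpha+2\beta)+\gamma}$, not $T^{(\alpha-1)/(\alpha+2\beta)-\gamma}$; your conclusion that it vanishes iff $\gamma<(2\beta-\alpha)/(\alpha+2\beta)$ is unaffected.)

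The genuine divergence is that your $G_T$ carries a $\sup_{\omega}$, whereas the paper's $G_T$ is the fixed-$\omega$ event $\big\{\hs\widehat{\F}^{XX}_{\omega,T}-\F^{XX}_{\omega}\hs_2\leq C^{1/2}T^{-1/2}B_T^{-1/2}T^{\delta}\big\}$ with $\gamma+2\delta<(2\beta-\alpha)/(\alpha+2\beta)$, whose probability bound is plain Chebyshev; no uniformity over $\omega$ is ever established there. Your uniformization is exactly the step that does not go through as sketched. The citation route fails because \citet{panaretos2013fourier} supplies pointwise and integrated mean-square rates (which is what Proposition \ref{FhatXX} repackages), not a $\sup_{\omega}$ operator-norm rate with a $\sqrt{\log T}$ factor. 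The grid route fails quantitatively: with only second moments, each grid point contributes failure probability of order $T^{-1}B_T^{-1}\zeta_T^{-2}=T^{-(2\beta-\alpha)/(\alpha+2\beta)+\gamma}$, while the modulus of continuity of $\omega\mapsto\widehat{\F}^{XX}_{\omega,T}$ (kernel-derivative scale $B_T^{-2}$) forces a grid of cardinality at least of order $\zeta_T^{-1}B_T^{-2}$; the union bound then needs $3\gamma<2(\beta-\alpha)/(\alpha+2\beta)$, which is impossible for any $\gamma>0$ whenever $\alpha\geq\beta$ --- a case permitted by (B1), which only requires $\alpha<\beta+1/2$. Upgrading Markov to fourth moments of $\hs\Delta_\omega\hs_2$ would require summability of eighth-order cumulants, beyond (B6); and $W$ is only assumed of bounded variation, so even the Lipschitz modulus bound needs justification. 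In short: read pointwise in $\omega$, as the paper states and proves it, your proof is complete and essentially identical to the paper's; read uniformly in $\omega$ --- which is what your event asserts, and what the use of $G_T$ inside the $d\omega$-integral in Theorem \ref{main_theorem} really calls for --- the uniformization is a genuine gap, one that the paper's own proof silently shares.
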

Note that this proposition establishes that even if the kernel function $W$ takes on some negative values, the ridge-estimator $\widehat{\F}_{\omega,T}^{XX} + \zeta_T\mathscr{I}$ will remain positive definite with high probability. Hence, we can find its inverse operator.
\begin{proof}[Proof of Proposition \ref{RidOp}]
 By the result in our last proposition, there exists a constant $C$ that does not depend on $\omega$ such that:
\begin{align*}
\mathbb{E} \hs \widehat{\F}_{\omega,T}^{XX} -\F_{\omega}^{XX} \hs_2^2 \leq C\times T^{-1} B_T^{-1} .
\end{align*}
Let  $\delta$ be a positive number such that $\gamma + 2 \delta < \frac{2\beta - \alpha}{\alpha + 2\beta}$. Define 
\begin{align*}
G_T = \Big\{\theta: \theta \in \Omega ;  \hs\widehat{\F}_{\omega,T}^{XX} -\F_{\omega}^{XX}\hs_2 \leq C^{1/2} T^{-1/2} B_T^{-1/2} T^{\delta}\Big\}.
\end{align*}
Then for $\delta > 0$, $\mathbb{P}(G_T) \rightarrow 1$. 
Let $\widehat{\lambda}^{\omega}_{j,T}$  denote the $j$th eigenvalue of $\widehat{\F}_{\omega,T}^{XX}$. Then,
on the even $G_T$, we have
\begin{align*}
 C^{1/2} T^{-1/2} B_T^{-1/2} T^{\delta} &\geq \hs\widehat{\F}_{\omega,T}^{XX} - \F_{\omega}^{XX} \hs_2 \geq \big|\widehat{\lambda}_{j,T}^{\omega}  - \lambda_j^{\omega} \big| \geq \lambda_j^{\omega} - \widehat{\lambda}_{j,T}^{\omega} \\
\widehat{\lambda}_{j,T}^{\omega} &\geq - C^{1/2} T^{-1/2} B_T^{-1/2} T^{\delta}.
\end{align*}
Since $B_T = T^{-\gamma}$ and 
$\alpha/(\alpha + 2\beta) < 1/2 - \gamma/2 -\delta$, it must be that $\zeta_T > C^{1/2} T^{-1/2} T^{\gamma/2} T^{\delta}  = C^{1/2} T^{-1/2} B_T^{-1/2} T^{\delta} $. It follows that 
\begin{align*}
\zeta_T + \widehat{\lambda}_{j,T}^{\omega} \geq T^{-\alpha/(\alpha + 2\beta)} - C^{1/2} B_T^{-1/2} T^{-1/2} T^{\delta} > 0,
\end{align*}
on $G_T$. 
\end{proof}

\section*{Acknowledgements}
Peter Hall was a mentor to the first author and role model for both authors, and this paper is dedicated to his memory.

\bibliographystyle{imsart-nameyear}
\bibliography{biblio1}
\end{document}